\documentclass[psamsfonts]{amsart}

%-------Packages---------
\usepackage{amssymb,amsfonts,amscd}
\usepackage[all,arc]{xy}
\usepackage{enumerate}
\usepackage{mathrsfs}

%--------Theorem Environments--------
%theoremstyle{plain} --- default
\newtheorem{thm}{Theorem}[section]
\newtheorem{cor}[thm]{Corollary}
\newtheorem{prop}[thm]{Proposition}
\newtheorem{lem}[thm]{Lemma}

\newtheorem*{thm1}{Theorem A}
\newtheorem*{thm4}{Theorem B}

\theoremstyle{definition}
\newtheorem{defn}[thm]{Definition}

\theoremstyle{remark}
\newtheorem{rem}[thm]{Remark}

\makeatletter
\let\c@equation\c@thm
\makeatother
\numberwithin{equation}{section}

\bibliographystyle{plain}

\newcommand{\upperRomannumeral}[1]{\uppercase\expandafter{\romannumeral#1}}

\title[]{Nonnegative Ricci curvature, stability at infinity, and finite generation of fundamental groups}

\author[]{Jiayin Pan}

\newcommand{\Addresses}{{% additional braces for segregating \footnotesize
		\bigskip
		\footnotesize
		
		Jiayin Pan, \textsc{Department of Mathematics, Rutgers University-New Brunswick, 110 Frelinghuysen Road, Piscataway, New Jersey 08854, USA.}\par\nopagebreak
		\textit{E-mail address}: \texttt{jp1016@math.rutgers.edu}

}}

\begin{document}

\begin{abstract}
   We study the fundamental group of an open $n$-manifold $M$ of nonnegative Ricci curvature. We show that if there is an integer $k$ such that any tangent cone at infinity of the Riemannian universal cover of $M$ is a metric cone, whose maximal Euclidean factor has dimension $k$, then $\pi_1(M)$ is finitely generated. In particular, this confirms the Milnor conjecture for a manifold whose universal cover has Euclidean volume growth and the unique tangent cone at infinity.
\end{abstract}

\maketitle

A longstanding problem in Riemannian geometry is the Milnor conjecture, which was proposed in 1968 \cite{Mi}. It states that any open $n$-manifold of nonnegative Ricci curvature has a finitely generated fundamental group. This conjecture remains open.

The Milnor conjecture has been verified under various additional assumptions. For a manifold with Euclidean volume growth, Anderson and Li independently have proven that the fundamental group is finite \cite{An,Li}. Sormani has showed that the Milnor conjecture holds if the manifold has small linear diameter growth, or linear volume growth \cite{Sor}. Liu has classified open $3$-manifolds of $\mathrm{Ric}\ge 0$, which confirms the Milnor conjecture in dimension $3$ \cite{Liu}. Recently, the author has presented a different approach to the Milnor conjecture in dimension $3$ \cite{Pan} with Cheeger-Colding theory \cite{CC1,CC2,CN1}.

In this paper, we verify the Milnor conjecture for manifolds with additional conditions on the Riemannian universal covers at infinity. For any open $n$-manifold $(M,x)$ of $\mathrm{Ric}\ge 0$, and any sequence $r_i\to \infty$, passing to a subsequence if necessary, we can consider a tangent cone of $M$ at infinity, which is the Gromov-Hausdorff limit \cite{Gro3} of
$$(r_i^{-1}M,x)\overset{GH}\longrightarrow(Y,y).$$
In general, a tangent cone of $M$ at infinity may not be unique \cite{CC2}. By splitting theorem \cite{CC1}, $Y$ is a metric product $\mathbb{R}^k\times Y'$, where $Y'$ has no lines. Cheeger and Colding have showed that when ${M}$ has Euclidean volume growth, any tangent cone of ${M}$ at infinity is a metric cone $(\mathbb{R}^k\times C(Z),(0,z))$ of dimension $n$ \cite{CC1}, where $C(Z)$ has $\mathrm{diam}(Z)<\pi$ and the vertex $z$. However, $k$ may not be unique among all tangent cones of $M$ at infinity \cite{CN2}.

We state our main result.
\begin{thm1}\label{main}
	Let $M$ be an open $n$-manifold of $\mathrm{Ric}\ge 0$. If there is an integer $k$ such that any tangent cone at infinity of the Riemannian universal cover of $M$ is a metric cone, whose maximal Euclidean factor has dimension $k$, then $\pi_1(M)$ is finitely generated.
\end{thm1}

If $k=0$, then in fact $\pi_1(M)$ is finite (Proposition \ref{0_sym}).

\begin{cor}\label{n_unique_up}
	Let $M$ be an open $n$-manifold of $\mathrm{Ric}\ge 0$. If the Riemannian universal cover of $M$ has Euclidean volume growth and the unique tangent cone at infinity, then $\pi_1(M)$ is finitely generated.
\end{cor}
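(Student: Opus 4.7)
\textbf{Proof plan for Corollary \ref{n_unique_up}.} The strategy is simply to verify that the hypothesis of Theorem A is satisfied, and then invoke the theorem. Let $\widetilde{M}$ denote the Riemannian universal cover of $M$. Since $\mathrm{Ric}\ge 0$ is a local property, $\widetilde{M}$ is also an open $n$-manifold of $\mathrm{Ric}\ge 0$.

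The first step is to apply the Cheeger--Colding theorem quoted in the introduction: because $\widetilde{M}$ has Euclidean volume growth, every tangent cone at infinity of $\widetilde{M}$ is a metric cone of the form $(\mathbb{R}^{k}\times C(Z),(0,z))$ with $\mathrm{diam}(Z)<\pi$. So the structural part of the hypothesis of Theorem A (being a metric cone) holds automatically for every tangent cone of $\widetilde{M}$ at infinity.

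The second step is to observe that uniqueness of the tangent cone at infinity trivially forces the Euclidean-factor dimension to be a single integer $k$. Indeed, there is only one tangent cone (up to isometry), so there is only one value of $k$ that can appear; define $k$ to be this value. Then every tangent cone of $\widetilde{M}$ at infinity is a metric cone whose maximal Euclidean factor has dimension exactly $k$.

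Having verified the hypothesis of Theorem A for this integer $k$, the conclusion $\pi_1(M)$ is finitely generated follows immediately. There is essentially no obstacle here; the corollary is a direct specialization of Theorem A using the Cheeger--Colding metric-cone structure under Euclidean volume growth, and the entire content of the argument is in Theorem A itself.
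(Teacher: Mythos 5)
Your proof is correct and matches the paper's intended (implicit) argument: Cheeger--Colding's theorem under Euclidean volume growth gives that every tangent cone at infinity of $\widetilde{M}$ is a metric cone $\mathbb{R}^k\times C(Z)$ with $\mathrm{diam}(Z)<\pi$, and uniqueness of the tangent cone forces a single value of $k$, so Theorem~A applies directly.
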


Note that under the assumption in Theorem A, the Riemannian universal cover $\widetilde{M}$ of $M$ may have different tangent cones at infinity, even with different dimensions. We mention that the condition in Theorem A can be further weakened; see Remark \ref{rem_fix}.

For convenience, we introduce the following notion for the stability assumption in Theorem A.

\begin{defn}\label{def_k_sym}
	Let $M$ be an open $n$-manifold with $\mathrm{Ric}\ge 0$, and let $k$ be an integer. We say that $M$ is \textit{$k$-Euclidean at infinity}, if any tangent cone of $M$ at infinity $(Y,y)$ is a metric cone, whose maximal Euclidean factor has dimension $k$, that is, $(Y,y)$ splits as $(\mathbb{R}^k\times C(Z),(0,z))$, where $C(Z)$ is a metric cone with $\mathrm{diam}(Z)<\pi$ and the vertex $z$.
\end{defn}

We point out that, Definition \ref{def_k_sym} implies a uniform control on the diameter of $Z$: for any tangent cone of $M$ at infinity $(\mathbb{R}^k\times C(Z),(0,z))$, $\mathrm{diam}(Z)\le \pi-\eta(M)$ for some positive constant $\eta(M)$ (see Lemma \ref{gap_higher_sym} and Remark \ref{rem_gap_diam}).

We explain our approach to Theorem A as follows. Let $\widetilde{M}$ be the Riemannian cover of $M$. Given any tangent cone of $\widetilde{M}$ at infinity, we consider the associated equivariant Gromov-Hausdorff convergence with $\pi_1(M,x)$-action \cite{FY},
$$(r_i^{-1}\widetilde{M},\tilde{x},\pi_1(M,x))\overset{GH}\longrightarrow(\widetilde{Y},\tilde{y},G),$$
where $G$ is a closed subgroup of $\mathrm{Isom}(\widetilde{Y})$. We call such a limit space $(\widetilde{Y},\tilde{y},G)$ an equivariant tangent cone of $(\widetilde{M},\pi_1(M,x))$ at infinity. It is known that $G$ is always a Lie group \cite{CC3,CN1}. Recall that if $\widetilde{Y}$ is a metric cone $\mathbb{R}^k\times C(Z)$, where $\mathrm{diam}(Z)<\pi$, then $\mathrm{Isom}(\widetilde{Y})$ is a product $\mathrm{Isom}(\mathbb{R}^k)\times \mathrm{Isom}(Z)$. Thus $G$-action on $\widetilde{Y}$ can be naturally projected to $\mathbb{R}^k$-factor through $p: G\to \mathrm{Isom}(\mathbb{R}^k)$ (see Propositions \ref{cone_spit} and \ref{isom_gp_split}).

Our main discovery is that,
when $\widetilde{M}$ is $k$-Euclidean at infinity, there is certain equivariant stability among all the equivariant tangent cones of $(\widetilde{M},\pi_1(M,x))$ at infinity:

\begin{thm4}\label{main_omega}
	Let $M$ be an open $n$-manifold of $\mathrm{Ric}\ge 0$. Suppose that $\pi_1(M)$ is abelian and $\widetilde{M}$ is $k$-Euclidean at infinity. Then there exist a closed abelian subgroup $K$ of $O(k)$ and an integer $l\in[0,k]$ such that for any equivariant tangent cone of $(\widetilde{M},\pi_1(M,x))$ at infinity $(\widetilde{Y},\tilde{y},G)=(\mathbb{R}^k\times C(Z),(0,z),G)$, the projected $G$-action on $\mathbb{R}^k$-factor $(\mathbb{R}^k,0,p(G))$ satisfies that $p(G)=K\times \mathbb{R}^l$, with $K$ fixing $0$ and the subgroup $\{e\}\times \mathbb{R}^l$ acting as translations in $\mathbb{R}^k$.
\end{thm4}

\begin{rem}
	Theorem B can be generalized to nilpotent fundamental groups; see Remark \ref{rem_non_abelian}. We do not pursue this general statement, since we do not need this in the present paper.
\end{rem}

Theorem A follows from Theorem B. Suppose that $\pi_1(M,x)$ is not finitely generated, then without lose of generality, we can assume that $\pi_1(M)$ is abelian \cite{Wi}. Using the lengths of Gromov's short generators $r_i\to\infty$ \cite{Gro1}, we get an equivariant tangent cone of $(\widetilde{M},\pi_1(M,x))$ at infinity $(\widetilde{Y},\tilde{y},G)$, where $G$-orbit at $\tilde{y}$ is not connected (see Lemma \ref{non_cnt_orb}). If $\widetilde{M}$ is $k$-Euclidean at infinity, then by Theorem B the orbit $G\cdot\tilde{y}$ is connected, a contradiction.

We illustrate our approach to Theorem B. Put $\Gamma=\pi_1(M,x)$. Given two equivariant tangent cones of $(\widetilde{M},\Gamma)$ at infinity $(\widetilde{Y}_i,\tilde{y}_i,G_i)$ $(i=1,2)$, assume that their projected actions $(\mathbb{R}^k,0,p(G_i))$ are different. We consider the set of all equivariant tangent cones of $(\widetilde{M},\Gamma)$ at infinity $\Omega(\widetilde{M},\Gamma)$. It is known that $\Omega(\widetilde{M},\Gamma)$ is compact and connected in the equivariant Gromov-Hausdorff topology. Consequently, for any $\epsilon>0$, there are finitely many spaces $(W_j,w_j,H_j)\in\Omega(\widetilde{M},\Gamma)$ ($j=1,...,l$) such that $(W_1,w_1,H_1)=(\widetilde{Y}_1,\tilde{y}_1,G_1)$, $(W_l,w_l,H_l)=(\widetilde{Y}_2,\tilde{y}_2,G_2)$, and
$$d_{GH}((W_j,w_j,H_j),(W_{j+1},w_{j+1},H_{j+1}))\le \epsilon$$
for all $j=1,...,l-1$. When $\widetilde{M}$ is $k$-Euclidean at infinity, $W_i=\mathbb{R}^k\times C(Z_i)$ with $\mathrm{diam}(Z_i)\le \pi-\eta(\widetilde{M})$. Under this control, the associated chain of $\mathbb{R}^k$-factor in $W_j$ with projected $p(H_j)$-action $\{(\mathbb{R}^k,0,p(H_j))\}_{j=1}^l$ form a $\psi(\epsilon)$ chain, where $\psi(\epsilon)$ is a positive function with $\psi(\epsilon)\to 0$ as $\epsilon\to 0$.

To see a contradiction without involving the complexity in general situation, we restrict to
the special case that all $p(H_j)$-actions fix $0$. Then this leads to the following stability of isometric actions on the unit sphere $S^{k-1}\subseteq \mathbb{R}^k$: if $(S^{k-1},K_1)$ and $(S^{k-1},K_2)$ are sufficiently close in the equivariant Gromov-Hausdorff topology, then either $K_1$ and $K_2$ are conjugate in $O(k)$, or $\dim(K_1)\not=\dim(K_2)$ (see Proposition \ref{stability}). It turns out this stability is enough for us to derive a contradiction. For instance, if $p(G_1)=\{e\}$ and $p(G_2)=\mathbb{Z}_2$, there is no $\epsilon$-chain $\{(\mathbb{R}^k,0,p(H_j))\}_{j=1}^l$, with $p(H_i)$ fixing $0$, between $(\mathbb{R}^k,0,\{e\})$ and $(\mathbb{R}^k,0,\mathbb{Z}_2)$, given that $\epsilon$ is small (see Lemma \ref{gap_type_0}). To deal with the general situation where these $p(H_i)$-action may not fix $0$, we develop a key technical tool, referred as critical rescaling (see Section 2 for details).

We start with some preliminaries in Section 1. To illustrate the critical rescaling argument used in the proof of Theorem B, we consider a rudimentary version of Theorem B in section 2 (see Proposition \ref{not_type_0}). In Section 3, we prove a stability result on the isometric actions, which is another main ingredient in the proof of Theorem B as mentioned above. Afterwards, We prove Theorem B in Section 4.

The author would like to thank Professor Xiaochun Rong for his assistance during the preparation of this paper. Part of the paper was written during the author's visit in Capital Normal University at Beijing. The author would like to thank Capital Normal University for support and hospitality.

\tableofcontents

\section{Preliminaries}

In this section, we supply notions and results that will be used through the rest of this paper.

\noindent\textbf{a. Tangent cones at infinity. }Let $(M,x)$ be an open $n$-manifold of $\mathrm{Ric}\ge 0$, and let $(\widetilde{M},\tilde{x})$ be its Riemannian universal cover.  By path lifting $\Gamma=\pi_1(M,x)$ acts on $\widetilde{M}$ isometrically, freely and discretely. For any sequence $r_i\to\infty$, we can pass to a subsequence and obtain equivariant Gromov-Hausdorff convergence \cite{FY}:
\begin{center}
	$\begin{CD}
	(r^{-1}_i\widetilde{M},\tilde{x},\Gamma) @>GH>>
	(\widetilde{Y},\tilde{y},G)\\
	@VV\pi V @VV\pi V\\
	(r^{-1}_iM,x) @>GH>> (Y=\widetilde{Y}/G,y),
	\end{CD}$
\end{center}
where $G$ is a closed subgroup of $\mathrm{Isom}(\widetilde{Y})$, the isometry group of $\widetilde{Y}$.  We call the above space $(\widetilde{Y},\tilde{y},G)$ an equivariant tangent cone of $(\widetilde{M},\Gamma)$ at infinity. $\mathrm{Isom}(\widetilde{Y})$ is a Lie group \cite{CC3,CN1}, thus $G$ is a Lie group. For convenience, we introduce the set of all tangent cones of ${M}$ at infinity:
	$$\Omega({M})=\{({Y},{y})\ |\ ({Y},{y}) \text{ is a tangent cone of } {M} \text{ at infinity}\}.$$
The set $\Omega({M})$ has a natural topology: Gromov-Hausdorff topology. Similarly, we can consider the set of all equivariant tangent cones of $(\widetilde{M},\Gamma)$ at infinity:
	$$\Omega(\widetilde{M},\Gamma)=\{(\widetilde{Y},\tilde{y},G)|(\widetilde{Y},\tilde{y},G) \text{ is an equivariant tangent cone of } (\widetilde{M},\Gamma) \text{ at infinity}\}$$
endowed with the equivariant Gromov-Hausdorff topology. We use the following fact implicitly in the proof of Theorem B.
\begin{prop}
	Let $(M,x)$ be an open $n$-manifold with $\mathrm{Ric}\ge 0$. Then the set $\Omega(\widetilde{M},\Gamma)$ is compact and connected in the equivariant Gromov-Hausdorff topology.
\end{prop}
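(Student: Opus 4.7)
My plan is to mirror the classical argument that shows $\Omega(M)$ is compact and connected, replacing Gromov's precompactness with its equivariant version due to Fukaya--Yamaguchi \cite{FY}. The unifying observation is that the rescaling curve
$$\Phi:[1,\infty)\to \{\text{pointed equivariant metric spaces}\},\qquad \Phi(r)=(r^{-1}\widetilde{M},\tilde{x},\Gamma),$$
is continuous in the pointed equivariant Gromov--Hausdorff topology, and $\Omega(\widetilde{M},\Gamma)$ is precisely the $\omega$-limit set $\bigcap_{T\ge 1}\overline{\Phi([T,\infty))}$.

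For compactness I would first invoke Bishop--Gromov volume comparison to get uniform volume doubling on every bounded ball in the rescaled universal covers; Fukaya--Yamaguchi's equivariant precompactness theorem \cite{FY} then shows that any sequence $\Phi(r_i)$ with $r_i\to\infty$ has a subsequence converging in the equivariant Gromov--Hausdorff topology. A diagonal argument promotes this to sequential compactness of $\Omega(\widetilde{M},\Gamma)$ itself: given any sequence $(\widetilde{Y}_j,\tilde{y}_j,G_j)\in \Omega(\widetilde{M},\Gamma)$, for each $j$ choose a scale $r_j$ so that $\Phi(r_j)$ lies within equivariant Gromov--Hausdorff distance $1/j$ of $(\widetilde{Y}_j,\tilde{y}_j,G_j)$, pass to a subsequence of $\Phi(r_j)$ converging to some $(\widetilde{Y}_\infty,\tilde{y}_\infty,G_\infty)\in \Omega(\widetilde{M},\Gamma)$, and observe that $(\widetilde{Y}_j,\tilde{y}_j,G_j)$ converges to the same limit.

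For connectedness I would use the standard fact that the $\omega$-limit set of a continuous curve with precompact image in a metrizable space is connected. Continuity of $\Phi$ is immediate: if $r'/r$ is close to $1$, the identity map on $\widetilde{M}$ is an equivariant near-isometry between $r'^{-1}\widetilde{M}$ and $r^{-1}\widetilde{M}$ on every bounded ball, and the $\Gamma$-action is unchanged. Assume for contradiction that $\Omega(\widetilde{M},\Gamma)=A\sqcup B$ with $A,B$ disjoint, nonempty, and compact. Choose disjoint open neighborhoods $U_A\supseteq A$ and $U_B\supseteq B$ inside a precompact neighborhood of $\Omega(\widetilde{M},\Gamma)$. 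Since $\Phi$ accumulates on both $A$ and $B$, it enters $U_A$ and $U_B$ for arbitrarily large $r$; by continuity $\Phi$ must then take values outside $U_A\cup U_B$ at a sequence of times $t_i\to\infty$. Extracting a convergent subsequence of $\Phi(t_i)$ via the precompactness step produces a point in $\Omega(\widetilde{M},\Gamma)$ lying outside $U_A\cup U_B$, contradicting the decomposition.

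The principal technical nuisance is purely bookkeeping: one has to confirm that the pointed equivariant Gromov--Hausdorff pseudo-metric genuinely metrizes the relevant topology on precompact families, so that both the continuity of $\Phi$ and the $\omega$-limit argument are applicable without ambiguity. Once this framework is in place, the argument is a verbatim equivariant translation of the well-known statement for $\Omega(M)$.
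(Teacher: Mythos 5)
The paper states this proposition without proof, describing it only as a ``fact'' used implicitly in the proof of Theorem~B, so there is no internal argument to compare against. Your proposal is a correct and standard way to fill in the details: identify $\Omega(\widetilde{M},\Gamma)$ with the $\omega$-limit set of the continuous rescaling path $\Phi(r)=(r^{-1}\widetilde{M},\tilde{x},\Gamma)$, use Bishop--Gromov volume comparison (uniform at all scales since $\mathrm{Ric}\ge 0$) plus Fukaya--Yamaguchi equivariant precompactness to get subsequential limits and hence eventual precompactness of $\Phi$, run the diagonal argument for closedness, and then invoke the elementary topology fact that the $\omega$-limit set of a continuous curve with eventually precompact image in a metrizable space is compact and connected.

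Two minor points are worth being explicit about. First, in the diagonal step you should insist on choosing the approximating scale $r_j\ge j$ in addition to $d_{GH}(\Phi(r_j),(\widetilde{Y}_j,\tilde{y}_j,G_j))\le 1/j$; this is possible because each $(\widetilde{Y}_j,\tilde{y}_j,G_j)$ is by definition a limit along scales tending to infinity, and it is exactly what guarantees that the resulting subsequential limit actually lies in $\Omega(\widetilde{M},\Gamma)$. Second, the ``bookkeeping'' concern you flag is real but unproblematic: the pointed equivariant Gromov--Hausdorff distance behaves as a genuine metric on isometry classes when restricted to a uniformly totally bounded family, which is what volume doubling provides here, so the intermediate-value-theorem step in the connectedness argument goes through verbatim. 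With those remarks made precise, the argument is complete.
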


Throughout this paper, we use $d_{GH}$ to denote Gromov-Hausdorff distance, or equivariant Gromov-Hausdorff distance, depending on the context.

\noindent\textbf{b. Short generators }To study whether the fundamental group is finitely generated, we use Gromov's short generators of $\Gamma=\pi_1(M,x)$ \cite{Gro1}. We say that $\{\gamma_1,...,\gamma_i,...\}$ is a set of short generators of $\Gamma$, if
\begin{center}
	$d(\gamma_1\tilde{x},\tilde{x})\le d(\gamma\tilde{x},\tilde{x})$ for all $\gamma\in\Gamma$,
\end{center}
and for each $i\ge 2$,
\begin{center}
	$d(\gamma_i\tilde{x},\tilde{x})\le d(\gamma\tilde{x},\tilde{x})$ for all $\gamma\in\Gamma-\langle\gamma_1,...,\gamma_{i-1}\rangle$,
\end{center}
where $\langle\gamma_1,...,\gamma_{i-1}\rangle$ is the subgroup generated by $\gamma_1,...,\gamma_{i-1}$.

If $\Gamma$ has infinitely many short generators, then we can use the lengths of short generators to get an equivariant tangent cone of $(\widetilde{M},\Gamma)$ at infinity. In this way, we find a special element in $\Omega(\widetilde{M},\Gamma)$, whose orbit at $\tilde{y}$ is not connected.

\begin{lem}\label{non_cnt_orb}
	Let $(M,x)$ be an open $n$-manifold with $\mathrm{Ric}\ge 0$. Suppose that $\Gamma$ has infinitely many short generators $\{\gamma_1,...,\gamma_i,...\}$. Then in the following equivariant tangent cone of $(\widetilde{M},\tilde{x},\Gamma)$ at infinity
	$$(r_i^{-1}\widetilde{M},\tilde{x},\Gamma)\overset{GH}\longrightarrow(\widetilde{Y},\tilde{y},G),$$
	the orbit $G\cdot\tilde{y}$ is not connected, where $r_i=d(\gamma_i\tilde{x},\tilde{x})\to\infty$.
\end{lem}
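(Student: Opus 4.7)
The plan is to argue by contradiction. The starting point is a built-in separation property of short generators: if $\gamma\in\langle\gamma_1,\dots,\gamma_{i-1}\rangle$, then $\gamma^{-1}\gamma_i$ still lies outside this subgroup (otherwise $\gamma_i$ itself would be inside, contradicting its role as a short generator), so the defining minimality of $\gamma_i$ yields $d(\gamma_i\tilde x,\gamma\tilde x)=d(\gamma^{-1}\gamma_i\tilde x,\tilde x)\ge r_i$. In the rescaled metric $r_i^{-1}d$ the point $\gamma_i\tilde x$ thus sits at distance one from $\tilde x$ and at distance at least one from the entire orbit $\langle\gamma_1,\dots,\gamma_{i-1}\rangle\cdot\tilde x$. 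Passing to a subsequence, the isometries $\gamma_i$ converge equivariantly to some $g\in G$ with $d(g\tilde y,\tilde y)=1$.

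Now suppose, for a contradiction, that $G\cdot\tilde y$ is connected. Since $G\cdot\tilde y$ is homeomorphic to $G/G_{\tilde y}$, where $G_{\tilde y}$ is the stabilizer, connectedness forces $G=G_0\cdot G_{\tilde y}$, with $G_0$ the identity component. In particular there is some $g_0\in G_0$ with $g_0\tilde y=g\tilde y$. Since $G_0$ is a connected Lie group, we may fix a continuous path $\alpha\colon[0,1]\to G_0$ from $e$ to $g_0$, and for any chosen $\delta\in(0,1/2)$ discretize it via a partition $0=t_0<t_1<\dots<t_N=1$ with $d(\alpha(t_k)\tilde y,\alpha(t_{k-1})\tilde y)<\delta$ for every $k$.

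For each $k$, equivariant Gromov--Hausdorff convergence supplies elements $\beta^{(i)}_k\in\Gamma$ whose orbit points $\beta^{(i)}_k\tilde x$, viewed in the rescaled space, tend to $\alpha(t_k)\tilde y$; take $\beta^{(i)}_0=e$. For $i$ large, $d(\beta^{(i)}_k\tilde x,\beta^{(i)}_{k-1}\tilde x)<2\delta\, r_i<r_i$, so the first paragraph gives $(\beta^{(i)}_{k-1})^{-1}\beta^{(i)}_k\in\langle\gamma_1,\dots,\gamma_{i-1}\rangle$ for every $k$. Composing these inclusions shows $\beta^{(i)}_N\in\langle\gamma_1,\dots,\gamma_{i-1}\rangle$. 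On the other hand $\beta^{(i)}_N\tilde x\to\alpha(1)\tilde y=g\tilde y$, which is the same limit as $\gamma_i\tilde x$, so $d(\beta^{(i)}_N\tilde x,\gamma_i\tilde x)=o(r_i)$. This directly contradicts the separation $d(\beta^{(i)}_N\tilde x,\gamma_i\tilde x)\ge r_i$ from the first paragraph.

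The main technical hinge is the third paragraph: the Lie-theoretic connectedness of $G_0$ has to be propagated back into a purely group-theoretic statement inside the discrete group $\Gamma$, and the short-generator minimality is exactly the ``length budget'' that each small step in the discretization is allowed to spend. Everything else is routine bookkeeping within equivariant Gromov--Hausdorff convergence.
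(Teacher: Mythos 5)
Your argument is correct and gives a complete, self-contained proof of a fact that the paper itself defers to the reference \cite{Pan}. The structure is the natural one that the separation property of short generators forces: the inequality $d(\gamma_i\tilde x,\gamma\tilde x)\ge r_i$ for all $\gamma\in\langle\gamma_1,\dots,\gamma_{i-1}\rangle$, passage to a limit isometry $g$ with $d(g\tilde y,\tilde y)=1$, and then lifting a fine chain of orbit points from $\tilde y$ to $g\tilde y$ back into $\Gamma$, where the short-generator minimality forces each small step to land in $\langle\gamma_1,\dots,\gamma_{i-1}\rangle$, yielding $\beta_N^{(i)}\in\langle\gamma_1,\dots,\gamma_{i-1}\rangle$ yet $d(\beta_N^{(i)}\tilde x,\gamma_i\tilde x)=o(r_i)$, a contradiction. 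Two small remarks. First, the passage from ``$G\cdot\tilde y$ is connected'' to ``$G/G_{\tilde y}$ is connected'' tacitly uses that the orbit map $G/G_{\tilde y}\to G\cdot\tilde y$ is a homeomorphism; this does hold because $G$ acts properly on the proper space $\widetilde Y$, but it deserves a mention. Second, the Lie-theoretic detour through $G=G_0\,G_{\tilde y}$ and a path $\alpha$ in $G_0$ is more machinery than you need: any connected subset of a metric space is $\delta$-chain-connected for every $\delta>0$ (the set of points reachable from $\tilde y$ by a $\delta$-chain inside $G\cdot\tilde y$ is nonempty, open, and closed in $G\cdot\tilde y$), so connectedness of the orbit already hands you the finite chain $\tilde y=y_0,\dots,y_N=g\tilde y$ with $d(y_{k-1},y_k)<\delta$; writing $y_k=g_k\tilde y$ and lifting each $g_k$ to $\beta_k^{(i)}\in\Gamma$ lets the rest of your argument run unchanged, while avoiding both the homeomorphism point and the appeal to path-connectedness of $G_0$.
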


Lemma \ref{non_cnt_orb} follows directly from the definition of short generators and equivariant Gromov-Hausdorff convergence (see \cite{Pan} for the proof of Lemma \ref{non_cnt_orb}). 

Another theorem related to the Milnor conjecture is the Wilking's reduction.

\begin{thm}\cite{Wi}\label{red_W}
	Let $M$ be an open manifold with $\mathrm{Ric}\ge 0$.
	If $\pi_1(M)$ is not finitely generated, then it contains a non-finitely generated abelian subgroup.
\end{thm}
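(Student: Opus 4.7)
The plan is to combine Milnor's polynomial growth theorem, Gromov's polynomial growth theorem, and a uniform bound on the virtually nilpotent structure, then carry out a group-theoretic descent inside a non-finitely generated nilpotent ambient group.

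First, by Milnor's theorem every finitely generated $H \subseteq \Gamma := \pi_1(M)$ has polynomial volume growth, so by Gromov's polynomial growth theorem $H$ is virtually nilpotent. The crucial geometric strengthening I would invoke is a uniform Margulis-type lemma (in the spirit of Kapovitch-Wilking), producing constants $s = s(n)$ and $C = C(n)$, depending only on $n = \dim M$, such that every finitely generated $H \subseteq \Gamma$ contains a nilpotent subgroup of class at most $s$ and index at most $C$.

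Second, assume $\Gamma$ is not finitely generated and write $\Gamma = \bigcup_i H_i$ as a strictly increasing union of finitely generated subgroups. Let $N_i \subseteq H_i$ be the nilpotent subgroups from step one, and after passing to a subsequence and intersecting with finitely many conjugates inside $H_{i+1}$ arrange a nested chain $N_1 \subseteq N_2 \subseteq \cdots$ of nilpotent subgroups, still with class $\le s$. The union $N := \bigcup_i N_i$ is nilpotent of class $\le s$; it is not finitely generated, for otherwise the chain would stabilize at some $N_{i_0}$, and combined with $[H_j : N_j] \le C$ this would force the strictly increasing chain $H_j$ to stabilize, a contradiction.

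Third, I would extract a non-finitely generated abelian subgroup from $N$ by induction on its nilpotency class. The base $s=1$ is immediate. For $s \ge 2$, consider $A := \gamma_s(N)$, the last nontrivial term of the lower central series: $A$ is central in $N$ and hence abelian. If $A$ is not finitely generated, it is the subgroup sought. Otherwise $N/A$ is nilpotent of strictly smaller class and remains non-finitely generated (a central extension of a f.g.~group by a f.g.~group is f.g.), so by induction the problem reduces to the class-$2$ case. In that case $A$ is f.g.~central, $V := N/A$ is non-f.g.~abelian, and the commutator pairing $c : V \wedge V \to A$ is alternating with values in a finitely generated group. I would then pick elements $v_1, v_2, \ldots \in V$ inductively with $v_{k+1} \in \bigcap_{j \le k} v_j^\perp$ and $v_{k+1} \notin \langle v_1, \ldots, v_k \rangle$: this is possible because each annihilator $v_j^\perp$ has finitely generated cokernel in $V$ (the image of $c(v_j, \cdot)$ sits inside the f.g.~group $A$), so the intersection remains non-f.g. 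The subgroup $\langle v_i : i \ge 1 \rangle \subseteq V$ is isotropic for $c$, and its preimage in $N$ is a non-f.g.~abelian subgroup of $\Gamma$.

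The main obstacle is the uniform bound on index and nilpotency class in step one: Gromov's polynomial growth theorem on its own provides no uniformity in either quantity, and extracting both constants depending only on $n$ is the genuinely curvature-theoretic input, requiring a generalized Margulis lemma obtained via Cheeger-Colding/Kapovitch-Wilking convergence theory for spaces with $\mathrm{Ric}\ge 0$. A secondary delicate point is the inductive descent in step three, where abelian subgroups of a nilpotent quotient do not lift directly to abelian subgroups, and one must instead exploit the finite generation of $A$ together with the alternating structure of the commutator form.
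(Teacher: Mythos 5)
The paper does not prove this theorem; it is quoted from Wilking's 2000 paper \cite{Wi}, so there is no ``paper's own proof'' to compare against. I will therefore assess your argument on its own merits and against Wilking's actual route.

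Your Step 3, the inductive descent from a non-finitely-generated nilpotent subgroup $N$ to a non-finitely-generated abelian subgroup, is correct and nicely done. The reduction via $\gamma_s(N)$ (or equivalently $Z(N)$), the observation that a central extension of a f.g.\ group by a f.g.\ group is f.g., and the class-$2$ endgame via the alternating commutator pairing $c:V\wedge V\to A$ with $A$ finitely generated --- inductively choosing $v_{k+1}\in\bigcap_{j\le k}v_j^\perp\setminus\langle v_1,\dots,v_k\rangle$, noting each $V/v_j^\perp$ embeds in the f.g.\ group $A$ so the intersections stay non-f.g.\ --- all of this is sound, and the lift of the isotropic subgroup is abelian and non-f.g.\ as you say.

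The real gap is in Steps 1--2, and it is exactly the one you half-suspect. A uniform index bound $C(n)$ for nilpotent subgroups of \emph{arbitrary} finitely generated subgroups does not follow from Gromov's polynomial growth theorem: the symmetric groups $S_k$ all have growth degree $0$, yet their maximal nilpotent subgroups have index going to infinity. So ``polynomial growth of degree $\le n$'' alone cannot give you $C(n)$. The Kapovitch--Wilking generalized Margulis lemma can plausibly be made to supply such a bound after a rescaling argument (scale so your chosen generators of $H$ become $\epsilon$-short, apply the lemma to the group of short loops, intersect back with $H$), but you do not carry this out, and in any case that lemma dates from 2011, eleven years after Wilking's theorem. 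The nesting construction in Step 2 is also glossed over: even granting a uniform $C$, arranging $N_1\subseteq N_2\subseteq\cdots$ requires a pigeonhole/diagonal argument using that a f.g.\ nilpotent group has only finitely many subgroups of each given finite index, which you compress into one clause.

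More importantly, this is genuinely not Wilking's route. Wilking isolates a purely group-theoretic statement: if every finitely generated subgroup of a group $\Gamma$ has polynomial growth of degree at most $d$ and $\Gamma$ is not finitely generated, then $\Gamma$ contains a non-finitely-generated abelian subgroup. The only geometry that enters is Milnor's Bishop--Gromov estimate, which furnishes the degree bound $d=n$. Everything after that is algebra, resting on Gromov's theorem and the Bass--Guivarc'h control of Hirsch rank and nilpotency class by the growth degree; no uniform index bound is invoked and no Margulis-type lemma is needed. Your proposal replaces this algebraic core by a much heavier (and anachronistic) geometric input, and the step that was supposed to be elementary --- passing from the polynomial growth hypothesis to a single non-f.g.\ nilpotent subgroup --- is precisely the step where your argument is incomplete. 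I would encourage you to try to close Steps 1--2 without the uniform index bound, for instance by working with normal nilpotent subgroups, the Fitting subgroups $\operatorname{Fit}(H_i)$, or Wilking's own device of tracking torsion-free ranks, since that is where the actual content of the theorem lies.
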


\noindent\textbf{c. Structures of Ricci limit spaces }Next we recall some results on Ricci limit spaces. We denote $\mathcal{M}(n,0)$ as the set of all the Gromov-Hausdorff limit spaces coming from some sequence $(M_i,x_i)$ of $n$-manifolds with $\mathrm{Ric}\ge 0$.

\begin{thm}\cite{CC1}\label{splitting}
	Let $(X,x)\in\mathcal{M}(n,0)$ be a Ricci limit space. If $X$ contains a line, then $X$ splits isometrically as $\mathbb{R}\times Y$.
\end{thm}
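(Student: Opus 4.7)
The plan is to adapt the Cheeger-Gromoll splitting theorem to the Ricci limit setting, following the Cheeger-Colding strategy. Since the limit space $X$ need not be smooth, the Laplace comparison and Bochner formula cannot be applied directly on $X$; instead, they are applied on the approximating manifolds $(M_i,x_i)$ with $\mathrm{Ric}\ge 0$ converging to $(X,x)$, and the splitting is extracted in the limit through harmonic approximations of Busemann functions.

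First I would set up Busemann functions. The line $\gamma:\mathbb{R}\to X$ produces two rays and thus two Busemann functions $b^\pm(p)=\lim_{t\to\infty}(t-d(p,\gamma(\pm t)))$ on $X$, which are $1$-Lipschitz and satisfy $b^+ + b^- \le 0$ with equality along $\gamma$. By Gromov-Hausdorff convergence, for each $R$ and $i$ large there is an almost-line in $B_{2R}(x_i)$ of length $L_i\to\infty$; using its endpoints I define approximating Busemann-type functions $b^\pm_i$ on $B_R(x_i)$, for which the Laplace comparison gives $\Delta b^\pm_i \le 0$ in the barrier sense.

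Next I would harmonically approximate. On $B_R(x_i)$, solve $\Delta h^+_i = 0$ with boundary values $b^+_i$. The Abresch-Gromoll excess estimate applied to $e_i = b^+_i + b^-_i$, together with the barrier Laplace bound, gives $\|h^+_i - b^+_i\|_{L^2(B_{R/2}(x_i))}\to 0$ and $|\nabla h^+_i|\to 1$ in integrated form. The Bochner identity
$$\tfrac{1}{2}\Delta |\nabla h^+_i|^2 = |\mathrm{Hess}(h^+_i)|^2 + \mathrm{Ric}(\nabla h^+_i,\nabla h^+_i),$$
together with $\mathrm{Ric}\ge 0$ and a Cheeger-Colding cutoff with bounded Laplacian, then yields $\int_{B_{R/4}(x_i)} |\mathrm{Hess}(h^+_i)|^2 \to 0$ as $i\to\infty$. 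Passing to the Gromov-Hausdorff limit, the functions $h^+_i$ converge to $b^+$, and the vanishing limit Hessian forces $b^+$ to be linear along unit-speed gradient curves; these curves are lines, their flow gives isometries between level sets, and $X$ splits as $\mathbb{R}\times Y$ with $Y=\{b^+ = 0\}$.

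The main obstacle is making the integrated Hessian smallness rigorous and transferring it through the non-smooth limit. This requires constructing Cheeger-Colding cutoffs on $M_i$ whose Laplacians remain controlled independently of $i$, using segment inequalities to upgrade pointwise bounds on $b^+_i + b^-_i$ to integral estimates along most geodesics, and verifying that the Sobolev-type convergence of $h^+_i$ to $b^+$ is strong enough to recover a genuine metric splitting rather than merely a functional one.
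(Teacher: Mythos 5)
The paper does not prove this result: it is stated as Theorem~\ref{splitting} with the citation \cite{CC1} and used as a black box (it is the Cheeger--Colding splitting theorem for Ricci-limit spaces). So there is no in-paper proof to compare against; your sketch has to be judged as an outline of the Cheeger--Colding argument.

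As such an outline it captures the right architecture: work on the approximating manifolds $M_i$, not on the possibly non-smooth limit; build Busemann-type functions from an almost-line; harmonically replace; use the Abresch--Gromoll excess estimate to control $b^+_i+b^-_i$; run Bochner with a Cheeger--Colding cutoff to get $\int|\mathrm{Hess}\,h^+_i|^2\to 0$; and pass to the limit. Two points deserve correction, though. First, a sign error: with your convention $b^\pm(p)=\lim_t\bigl(t-d(p,\gamma(\pm t))\bigr)$, Laplace comparison under $\mathrm{Ric}\ge 0$ gives $\Delta b^\pm_i\ge -\tfrac{n-1}{d(\cdot,q_\pm)}$, i.e.\ the approximate Busemann functions are \emph{sub}harmonic (and $e_i=b^+_i+b^-_i\le 0$ is the negative of the usual Abresch--Gromoll excess); writing $\Delta b^\pm_i\le 0$ reverses the inequality that the strong maximum principle / harmonic replacement argument actually uses. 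Second, and more substantively, the last step is thinner than it reads: in a Ricci-limit space one cannot directly make sense of ``$\mathrm{Hess}\,b^+=0$ forces gradient curves to be lines,'' because there is no a priori gradient flow on $X$. The route in \cite{CC1} is to use the $L^2$-Hessian bound on $M_i$ together with the segment inequality to prove an \emph{almost-Pythagorean} theorem and construct an explicit GH-approximation from $B_R(x_i)$ to a ball in a product $\mathbb{R}\times Y_i$, with error controlled by the excess; the exact splitting of $X$ then comes from taking the GH limit of these quantitative almost-splittings, not from differentiating $b^+$ on $X$. Your closing paragraph acknowledges this gap, but the almost-Pythagorean / segment-inequality step is precisely the missing idea, not just a technicality to be checked.
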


\begin{thm}\cite{CC1}\label{vol_cone}
	Let $(M,x)$ be an open $n$-manifold of $\mathrm{Ric}\ge0$. If $M$ has Euclidean volume growth, then any tangent cone of $M$ at infinity $(Y,y)$ is a metric cone $C(Z)$ with vertex $y$ of Hausdorff dimension $n$. Moreover, $\mathrm{diam}(Z)\le \pi$.
\end{thm}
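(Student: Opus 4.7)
The plan is to combine the Bishop--Gromov volume comparison with Cheeger--Colding's volume-cone-implies-metric-cone rigidity. The Bishop--Gromov inequality gives that $V(r):=\mathrm{vol}(B_r(x))/\omega_n r^n$ is non-increasing in $r$, so Euclidean volume growth is precisely the statement that $V(r)$ has a positive limit $c>0$ as $r\to\infty$. Hence for every fixed $R>0$ the rescaled volume ratio $\mathrm{vol}_{r_i^{-1}M}(B_R(x))/\omega_n R^n$ converges to $c$ as $i\to\infty$, independently of $R$. This gives both the noncollapsing of the sequence $(r_i^{-1}M,x)$ and the asymptotic constancy of volume ratios at the base point.

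Next, I would pass to a tangent cone $(Y,y)\in\Omega(M)$ and invoke Colding's volume continuity theorem for noncollapsing Ricci limits: the renormalized $n$-dimensional Hausdorff measure satisfies $\mathcal{H}^n(B_R(y))/\omega_n R^n=c$ for every $R>0$, which also forces $Y$ to have Hausdorff dimension $n$. The heart of the proof is then the Cheeger--Colding volume-cone-implies-metric-cone rigidity: exact constancy of the volume ratio at a point of a Ricci limit space forces the space to be a metric cone with vertex at that point. Applied at $y$, this yields $Y=C(Z)$ with vertex $y$.

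For the diameter bound $\mathrm{diam}(Z)\le\pi$, one can argue as follows. Suppose there existed $z_1,z_2\in Z$ with $d_Z(z_1,z_2)>\pi$. Then the two unit-speed rays from $y$ in the directions of $z_1$ and $z_2$ would, via the cone law of cosines, realize angle $\pi$ at $y$ and concatenate to a bi-infinite geodesic line through $y$. By the splitting theorem (Theorem \ref{splitting}), $Y$ would split as $\mathbb{R}\times Y'$; combined with the cone structure having $y$ as its unique vertex, an inductive analysis of the split factor and the residual cross-section then forces $\mathrm{diam}(Z)\le\pi$.

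The main obstacle I expect is the volume-cone-implies-metric-cone step, which is the deep analytic content of Cheeger--Colding: to promote rigidity in the limiting Bishop--Gromov equality to the conical structure, one must construct an almost-radial distance-like function via harmonic approximations, apply the segment inequality to control gradient estimates along rays, and feed the resulting almost-splitting into an $\epsilon$-regularity argument on the sequence $(r_i^{-1}M,x)$. By comparison, the noncollapsing plus volume continuity and the diameter bound are comparatively clean consequences once the conical structure is in hand.
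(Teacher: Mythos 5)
The paper does not prove Theorem~\ref{vol_cone}; it is cited directly from Cheeger--Colding \cite{CC1}, so there is no internal proof to compare against. Your outline correctly identifies the main ingredients of the Cheeger--Colding argument: Bishop--Gromov monotonicity implying the normalized volume ratio $V(r)$ has a positive limit $c$; the resulting noncollapsing of the blow-down sequence and stabilization of the rescaled volume ratio at every scale; Colding's volume continuity theorem promoting this to $\mathcal{H}^n(B_R(y))/\omega_n R^n\equiv c$ on the limit, which also pins down the Hausdorff dimension; and finally the volume-cone-implies-metric-cone rigidity, which is the deep step and which you appropriately flag as the core difficulty. For the diameter bound, the idea of using the splitting theorem is sound, but two points in your sketch are off: $y$ need not be the unique vertex of a metric cone (any cone with a Euclidean factor of dimension $\ge 1$ has a whole Euclidean-factor's worth of apex points), and no induction is required. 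The direct argument is: if $\mathrm{diam}(Z)>\pi$, then by the cone law of cosines $Y=C(Z)$ contains a line, so by Theorem~\ref{splitting} it splits as $\mathbb{R}\times Y'$; since $Y$ is a cone, $Y'$ is a cone $C(Z')$, and the unit level set $Z$ about the vertex is then isometric to the spherical suspension of $Z'$, whose metric by construction takes values in $[0,\pi]$. Hence $\mathrm{diam}(Z)\le\pi$, contradicting the assumption.
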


When $(Y,y)\in \Omega(M)$ is a metric cone $Y=\mathbb{R}^k\times C(Z)$, we always put $0\in\mathbb{R}^k$ as the projection of $y\in {Y}$ to the Euclidean factor.

The metric cone structure and Theorem \ref{splitting} immediately imply the following properties on its isometries.

\begin{prop}\label{cone_spit}
	Let $(\mathbb{R}^k\times C(Z),(0,z))\in\mathcal{M}(n,0)$ be a metric cone, where $C(Z)$ has vertex $z$ and $\mathrm{diam}(Z)<\pi$. Then for any isometry $g$ of $C(Z)$, we have $g\cdot (\mathbb{R}^k\times \{z\})\subseteq \mathbb{R}^k\times \{z\}$.
\end{prop}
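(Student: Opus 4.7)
The plan is to show that every isometry $g$ of $\mathbb{R}^k\times C(Z)$ respects the foliation by $\mathbb{R}^k$-slices, inducing an isometry $\bar g$ of $C(Z)$ which then must fix the vertex $z$. The two key inputs, both driven by $\mathrm{diam}(Z)<\pi$, are the absence of lines in $C(Z)$ and an intrinsic characterization of $z$. First I would show $C(Z)$ contains no line. If it did, Theorem \ref{splitting} would give $C(Z)=\mathbb{R}\times W$; compatibility of the cone rescaling centered at $z$ with this product structure forces $W$ to be a cone $C(Z')$, so $Z$ is the spherical suspension of $Z'$ with $\mathrm{diam}(Z)=\pi$, contradicting the hypothesis. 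Next, for any unit-speed line $\gamma(t)=(\alpha(t),\beta(t))$ in $\mathbb{R}^k\times C(Z)$, the product metric forces $\alpha,\beta$ to be constant-speed geodesics with $|\alpha'|^2+|\beta'|^2=1$; if $|\beta'|>0$, a time rescaling of $\beta$ would be a line in $C(Z)$, which is impossible. Hence every line lies inside some slice $\mathbb{R}^k\times\{q\}$. An isometrically embedded $\mathbb{R}^k$ contains $k$ pairwise orthogonal lines, all lying in a single slice, and so equals that slice by dimension. Consequently the maximal Euclidean flats are exactly the slices, and any isometry $g$ permutes them, yielding a well-defined isometry $\bar g:C(Z)\to C(Z)$ with $g(\mathbb{R}^k\times\{q\})=\mathbb{R}^k\times\{\bar g(q)\}$.

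To conclude I would characterize $z\in C(Z)$ as the unique point that is not an interior point of any nondegenerate geodesic segment. For $p\neq z$, the ray from $z$ through $p$ extends strictly past $p$, so $p$ is interior to a nontrivial geodesic. Conversely, if $z$ lay in the interior of a geodesic from $p_1$ to $p_2$ with $p_i\neq z$, then $d(p_1,p_2)=d(p_1,z)+d(p_2,z)$; the cone distance formula then forces the angular distance $d_Z(\theta_1,\theta_2)\ge\pi$, contradicting $\mathrm{diam}(Z)<\pi$. Since $\bar g$ preserves this property, $\bar g(z)=z$ and therefore $g(\mathbb{R}^k\times\{z\})=\mathbb{R}^k\times\{z\}$.

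The main technical point is really the vertex characterization together with the fact that $C(Z)$ is lineless, both of which hinge on the strict inequality $\mathrm{diam}(Z)<\pi$; the reduction from isometries of the product to isometries of $C(Z)$ is then a clean consequence of the splitting theorem applied inside $C(Z)$.
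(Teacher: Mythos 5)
The paper itself gives no proof of Proposition \ref{cone_spit}; it only remarks that the result is ``immediately implied'' by the metric cone structure and Theorem \ref{splitting}. Your argument uses exactly those two ingredients (the cone distance formula to pin down the vertex, and the splitting theorem to rule out lines in $C(Z)$), so it is in the spirit the paper suggests, and the overall structure is sound. One caveat on reading: the statement literally says ``isometry $g$ of $C(Z)$,'' and the remark following Proposition \ref{0_sym} confirms that reading (``any isometry of $C(Z)$ must fix its vertex''); with that reading, only your vertex characterization via the cone distance formula is needed, and the foliation-by-slices part of your argument is superfluous for this proposition. However, the phrasing of the conclusion ($g\cdot(\mathbb{R}^k\times\{z\})\subseteq\mathbb{R}^k\times\{z\}$) and the role the statement plays in deriving Proposition \ref{isom_gp_split} strongly suggest the intended hypothesis is an isometry of the whole product $\mathbb{R}^k\times C(Z)$, which is what you actually prove. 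So you proved the (more useful) stronger version, and that is entirely reasonable.

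Two small points of rigor. First, the step ``compatibility of the cone rescaling centered at $z$ with this product structure forces $W$ to be a cone $C(Z')$'' is stated as though self-evident, but it requires an argument: after passing to the maximal Euclidean factor so that $W$ is lineless, one must show the dilations $\delta_\lambda$ permute the $\mathbb{R}$-slices (because they carry lines to lines and the slices are the only lines) and hence descend to dilations of $W$ fixing $w_0$. This works, but there is a shorter route: a line in $C(Z)$ directly forces $\sup_T d_Z(\theta(-T),\theta(T))\to\pi$ by the cone distance formula (the same computation you use for the vertex characterization), hence $\mathrm{diam}(Z)\ge\pi$, bypassing the splitting theorem and the suspension structure entirely; this is exactly the fact recorded in the paper's Remark \ref{rem_gap_diam}. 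Second, your claim that the projections of a line to the factors are constant-speed geodesics with speeds summing in squares to $1$ is the standard product-geodesic lemma; it holds in general metric products and is proved by the equality case of Cauchy--Schwarz applied to the Pythagorean distance, so it is fine, but it deserves a citation or a one-line justification rather than being asserted via the phrase ``the product metric forces.''
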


\begin{prop}\label{isom_gp_split}
	Let $Y=\mathbb{R}^k\times C(Z)\in \mathcal{M}(n,0)$ be a metric cone with $\mathrm{diam}(Z)<\pi$. Then its isometry group $\mathrm{Isom}(Y)$ splits as $\mathrm{Isom}(\mathbb{R}^k)\times \mathrm{Isom}(Z)$.
\end{prop}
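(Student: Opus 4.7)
The plan is to reduce an arbitrary isometry $g\in\mathrm{Isom}(Y)$ to a pair $(g_1,g_2)$ with $g_1\in\mathrm{Isom}(\mathbb{R}^k)$ and $g_2\in\mathrm{Isom}(Z)$, by carefully exploiting the preservation of the cone vertex guaranteed by Proposition \ref{cone_spit}. First I would invoke Proposition \ref{cone_spit} so that $g$ maps the Euclidean slice $E:=\mathbb{R}^k\times\{z\}$ onto itself. Restricting $g$ to $E$ gives a Euclidean isometry, which I will call $g_1\in\mathrm{Isom}(\mathbb{R}^k)$. Let $G_1$ denote the product isometry $(g_1,\mathrm{id}_{C(Z)})$, and set $h:=G_1^{-1}\circ g$. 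By construction $h\in\mathrm{Isom}(Y)$ fixes $E$ pointwise, so it suffices to show $h$ has the form $(\mathrm{id}_{\mathbb{R}^k},g_2)$ for some $g_2\in\mathrm{Isom}(Z)$ (acting on $C(Z)$ by the canonical cone extension $g_2(r,\xi)=(r,g_2\xi)$).

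Next I would argue pointwise. Fix $(v,(r,\xi))\in Y$ with $r>0$ and write $h(v,(r,\xi))=(v',p')$. Using that $h$ fixes every $(w,z)\in E$, the identity $d((v,(r,\xi)),(w,z))^2=|v-w|^2+r^2$ must be preserved, giving $|v'-w|^2+d_{C(Z)}(p',z)^2=|v-w|^2+r^2$ for every $w\in\mathbb{R}^k$. Letting $w$ vary forces $v'=v$ and $d_{C(Z)}(p',z)=r$. Hence $p'=(r,\phi(\xi))$ for some $\phi(\xi)\in Z$, and the radius $r$ and Euclidean coordinate $v$ are preserved. A priori the map $\phi$ depends on $v$ and $r$; write it as $\phi_{v,r}(\xi)$.

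To eliminate this dependence I would use the product metric twice. Along the ray $t\mapsto(v,(t,\xi))$, which is a geodesic ray in $Y$ emanating from $(v,z)$, the image under $h$ must be a geodesic ray from $(v,z)$; since its radial coordinate is preserved, $\phi_{v,r}(\xi)$ is independent of $r$, call it $\phi_v(\xi)$. Then for fixed $r,\xi$ and two base points $v_1,v_2$, the equality
\[
|v_1-v_2|=d\bigl((v_1,(r,\xi)),(v_2,(r,\xi))\bigr)=d\bigl((v_1,(r,\phi_{v_1}\xi)),(v_2,(r,\phi_{v_2}\xi))\bigr)=\sqrt{|v_1-v_2|^2+d_{C(Z)}\bigl((r,\phi_{v_1}\xi),(r,\phi_{v_2}\xi)\bigr)^2}
\]
forces $\phi_{v_1}(\xi)=\phi_{v_2}(\xi)$ in $Z$. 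Thus $\phi$ is a well-defined map $Z\to Z$, and the standard cone distance formula $d_{C(Z)}((r,\xi_1),(r,\xi_2))=2r\sin(d_Z(\xi_1,\xi_2)/2)$, valid because $\mathrm{diam}(Z)<\pi$, together with the isometry property of $h$ shows that $\phi=:g_2\in\mathrm{Isom}(Z)$. This gives $g=(g_1,g_2)$ and the reverse inclusion $\mathrm{Isom}(\mathbb{R}^k)\times\mathrm{Isom}(Z)\hookrightarrow\mathrm{Isom}(Y)$ is immediate from the product metric.

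The only nontrivial step is the second paragraph, where I must promote $h$ from ``fixes the Euclidean slice pointwise'' to ``splits as a product of identity on $\mathbb{R}^k$ and a $v$-independent twist on $Z$''. The $\mathrm{diam}(Z)<\pi$ hypothesis is essential here in two places: it underlies Proposition \ref{cone_spit} so that we may anchor $h$ to $E$ in the first place, and it makes the cone distance formula injective as a function of $d_Z$ so that an isometry of the cone restricts to an isometry of the cross-section. Once the base-point independence of $\phi$ is established, the rest is bookkeeping.
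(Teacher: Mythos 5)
Your argument is correct, and it fills in via an explicit coordinate computation what the paper declares immediate from the metric-cone structure and the splitting theorem: anchor to the distinguished Euclidean slice $E=\mathbb{R}^k\times\{z\}$ using Proposition~\ref{cone_spit}, reduce to an isometry $h$ fixing $E$ pointwise, and then read off the product form of $h$ from distance identities against $E$ together with the cone distance formula $d_{C(Z)}((r,\xi_1),(r,\xi_2))=2r\sin(d_Z(\xi_1,\xi_2)/2)$. Two small points worth tightening: Proposition~\ref{cone_spit} as stated only gives $g(E)\subseteq E$, so apply it to $g^{-1}$ as well to get $g(E)=E$ before declaring $g_1:=g|_E\in\mathrm{Isom}(\mathbb{R}^k)$; and your argument only establishes that $\phi\colon Z\to Z$ is distance-preserving, so to land in $\mathrm{Isom}(Z)$ you should note surjectivity, either by running the same argument on $h^{-1}$ or by invoking compactness of $Z$.
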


Due to this proposition, for a metric cone $Y=\mathbb{R}^k\times C(Z)\in \mathcal{M}(n,0)$, where $\mathrm{diam}(Z)<\pi$, there is a natural projection map:
$$p:\mathrm{Isom}(Y)\to\mathrm{Isom}(\mathbb{R}^k).$$
Throughout this paper, we always use $p$ to denote this projection map.

\begin{rem}\label{rem_orb}
	For any $v\in\mathbb{R}^k$, the orbit $G\cdot (v,z)=(p(G)\cdot v,z)$ can be naturally identified as $p(G)\cdot v \subseteq \mathbb{R}^k \times \{z\}$.
\end{rem}

Theorem A directly follows from these preparations and equivariant stability Theorem B.

\begin{proof}[Proof of Theorem A by assuming Theorem B]
	Suppose that $\pi_1(M)$ is not finitely generated, then by Lemma \ref{non_cnt_orb}, there is $(\mathbb{R}^k\times C(Z),(0,z),G)\in \Omega(\widetilde{M},\Gamma)$ such that the orbit $G\cdot (0,z)=(p(G)\cdot 0,z)$ is not connected. This contradicts Theorem B.
\end{proof}

When $\mathrm{diam}(Z)<\pi$, $C(Z)$ has no lines, and Proposition \ref{cone_spit} says that any isometry of $C(Z)$ must fix its vertex. This simple observation implies that if $\widetilde{M}$ is $0$-Euclidean at infinity, then its fundamental group must be finite.

\begin{prop}\label{0_sym}
	Let $M$ be an open $n$-manifold of $\mathrm{Ric}\ge 0$. If $\widetilde{M}$ is $0$-Euclidean at infinity, then $\pi_1(M)$ is finite.
\end{prop}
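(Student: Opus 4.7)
The plan is to argue by contradiction. Suppose $\Gamma := \pi_1(M, x)$ were infinite. Since $\Gamma$ acts freely and properly discontinuously on $\widetilde{M}$, the orbit $\Gamma\cdot \tilde{x}$ is a closed discrete subset, so one may pick $\gamma_i\in\Gamma$ whose displacements $R_i := d(\gamma_i\tilde{x},\tilde{x})$ tend to infinity. Rescaling by $R_i^{-1}$ and passing to a subsequence, equivariant Gromov-Hausdorff compactness produces a limit
\[
(R_i^{-1}\widetilde{M},\tilde{x},\Gamma)\overset{GH}\longrightarrow(\widetilde{Y},\tilde{y},G)\in\Omega(\widetilde{M},\Gamma).
\]

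In the rescaled metric, $\gamma_i\tilde{x}$ sits at distance exactly $1$ from $\tilde{x}$. Since this distance stays bounded, after a further diagonal extraction the isometries $\gamma_i$ subconverge to some $g_\infty\in G$ with $d(g_\infty\tilde{y},\tilde{y})=1$. In particular, the orbit $G\cdot\tilde{y}$ contains a point at distance $1$ from $\tilde{y}$.

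Now I invoke the $0$-Euclidean hypothesis: $\widetilde{Y}$ is a metric cone $C(Z)$ with $\mathrm{diam}(Z)<\pi$, and by the convention recorded in Definition \ref{def_k_sym} the basepoint $\tilde{y}$ is precisely the vertex $z$ of $C(Z)$. Proposition \ref{cone_spit}, applied in the case $k=0$, then asserts that every isometry of $C(Z)$ fixes the vertex, so $g_\infty\tilde{y}=\tilde{y}$ and $d(g_\infty\tilde{y},\tilde{y})=0$. This contradicts the previous paragraph, and hence $\Gamma$ must be finite.

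The only step requiring more than formal verification is the extraction of the limit isometry $g_\infty$ with the predicted displacement; this is the standard equivariant Arzelà-Ascoli statement built into the Fukaya-Yamaguchi theory of equivariant Gromov-Hausdorff convergence. Everything else is a direct appeal to the metric-cone rigidity already stated in Proposition \ref{cone_spit}, so I do not anticipate any serious obstacle.
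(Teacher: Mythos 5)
Your proof is correct and follows essentially the same line of reasoning as the paper: pick displacements tending to infinity, pass to the equivariant tangent cone at infinity, extract a limit isometry $g_\infty \in G$ at unit displacement, and then observe that Proposition \ref{cone_spit} forces every isometry of the cone $C(Z)$ (with $\mathrm{diam}(Z)<\pi$) to fix the vertex $\tilde y$, giving the contradiction. You spell out the equivariant Arzel\`a--Ascoli extraction of $g_\infty$ a bit more explicitly than the paper does, but that is the only difference.
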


\begin{proof}
	Suppose that $\Gamma=\pi_1(M,x)$ is an infinite group, then there are elements $\gamma_i\in \pi_1(M,x)$ with $r_i:=d(\gamma_i\tilde{x},\tilde{x})\to\infty$. Passing to a subsequence if necessary, we consider an equivariant tangent cone of $(\widetilde{M},\Gamma)$ at infinity:
	$$(r_i^{-1}\widetilde{M},\tilde{x},\Gamma)\overset{GH}\longrightarrow(\widetilde{Y},\tilde{y},G).$$
	By our choice of $r_i$, there is $g\in G$ such that $d(g\cdot\tilde{y},\tilde{y})=1$. On the other hand, since $\widetilde{M}$ is $0$-Euclidean at infinity, $(\widetilde{Y},\tilde{y})$ is a metric cone with no lines and $\tilde{y}$ is the unique vertex. Thus the orbit $G\cdot \tilde{y}$ must be a single point $\tilde{y}$ by Proposition \ref{cone_spit}. A contradiction.
\end{proof}

\begin{cor}\label{non_max_diam_growth}
	Let $(M,x)$ be an open $n$-manifold of $\mathrm{Ric}\ge 0$. If its universal cover $(\widetilde{M},\tilde{x})$ has Euclidean volume growth and non-maximal diameter growth
	$$\limsup\limits_{R\to\infty} \dfrac{\mathrm{diam}(\partial B_R(\tilde{x}))}{R}<2,$$
	then $\pi_1(M,x)$ is a finite group. Consequently, $M$ itself has Euclidean volume growth.
\end{cor}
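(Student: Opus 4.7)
The plan is to reduce the corollary to Proposition \ref{0_sym}: it suffices to establish that $\widetilde{M}$ is $0$-Euclidean at infinity. Since $\widetilde{M}$ has Euclidean volume growth, Theorem \ref{vol_cone} already guarantees that every tangent cone at infinity is a metric cone $(C(Z),y)$ of Hausdorff dimension $n$ with $\mathrm{diam}(Z)\le\pi$. The key upgrade is the strict inequality $\mathrm{diam}(Z)<\pi$ for every such $Z$: once that holds, the cone distance formula prevents $C(Z)$ from containing a line, and the splitting theorem (Theorem \ref{splitting}) then forces the maximal Euclidean factor of $C(Z)$ to be trivial, i.e.\ $k=0$.

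The main task is therefore to establish
$$\mathrm{diam}(Z)=\pi \text{ for some tangent cone cross-section } Z\ \Longrightarrow\ \limsup_{R\to\infty}\frac{\mathrm{diam}(\partial B_R(\tilde{x}))}{R}=2.$$
I would argue by contrapositive. Suppose $(C(Z),y)$ arises along $r_i\to\infty$ and $\mathrm{diam}(Z)=\pi$. Choose $z_1^\alpha,z_2^\alpha\in Z$ with $d_Z(z_1^\alpha,z_2^\alpha)\to\pi$; the cone distance formula gives $d_{C(Z)}((1,z_1^\alpha),(1,z_2^\alpha))\to 2$. Using Gromov-Hausdorff convergence $(r_i^{-1}\widetilde{M},\tilde{x})\to (C(Z),y)$ together with a diagonal argument, one lifts these pairs to $p_1^i,p_2^i\in\widetilde{M}$ with $r_i^{-1}d(p_j^i,\tilde{x})\to 1$ and $r_i^{-1}d(p_1^i,p_2^i)\to 2$. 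Projecting each $p_j^i$ along a minimizing geodesic from $\tilde{x}$ to its intersection with $\partial B_{r_i}(\tilde{x})$ produces $q_1^i,q_2^i\in\partial B_{r_i}(\tilde{x})$ with $d(q_1^i,q_2^i)=d(p_1^i,p_2^i)+o(r_i)$, and hence $\mathrm{diam}(\partial B_{r_i}(\tilde{x}))/r_i\to 2$, contradicting the hypothesis.

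The most delicate step is this last one, since diameters of metric spheres are not in general continuous under pointed Gromov-Hausdorff convergence; however, in our setting the cone structure of the limit furnishes explicit near-antipodal pairs that can be pulled back with controlled error. Granting the strict bound $\mathrm{diam}(Z)<\pi$, $\widetilde{M}$ is $0$-Euclidean at infinity and Proposition \ref{0_sym} yields that $\pi_1(M,x)$ is finite. Finally, putting $\Gamma=\pi_1(M,x)$, the inclusion $\pi^{-1}(B_R(x))\supseteq B_R(\tilde{x})$ together with the covering identity $\mathrm{vol}(\pi^{-1}(B_R(x)))=|\Gamma|\cdot\mathrm{vol}(B_R(x))$ gives
$$\mathrm{vol}(B_R(x))\ge |\Gamma|^{-1}\,\mathrm{vol}(B_R(\tilde{x}))\ge c\,R^n,$$
which combined with the Bishop-Gromov upper bound shows that $M$ itself has Euclidean volume growth.
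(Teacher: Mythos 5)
Your overall strategy is exactly the intended one: the paper leaves the proof to the reader, and the right reduction is to show that $\widetilde{M}$ is $0$-Euclidean at infinity and then invoke Proposition~\ref{0_sym}, with Theorem~\ref{vol_cone} supplying the metric-cone structure of every tangent cone and the diameter-growth hypothesis supplying the strict bound $\mathrm{diam}(Z)<\pi$. The final step showing $M$ itself has Euclidean volume growth via $\mathrm{vol}(B_R(x))\ge |\Gamma|^{-1}\mathrm{vol}(B_R(\tilde{x}))$ is also correct.

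There is, however, a genuine small gap in the step you yourself flag as delicate. You lift near-antipodal points $(1,z_1^\alpha),(1,z_2^\alpha)\in C(Z)$ to $p_j^i\in\widetilde{M}$ with $r_i^{-1}d(p_j^i,\tilde{x})\to 1$, and then ``project each $p_j^i$ along a minimizing geodesic from $\tilde{x}$ to its intersection with $\partial B_{r_i}(\tilde{x})$.'' If $d(p_j^i,\tilde{x})<r_i$ (which is allowed, since the GH approximation only gives $r_i^{-1}d(p_j^i,\tilde{x})\to 1$, not $\ge 1$), the minimizing geodesic from $\tilde{x}$ to $p_j^i$ lies entirely inside $B_{r_i}(\tilde{x})$ and has no intersection with the sphere; extending the geodesic past $p_j^i$ does not help because the extension need not remain minimizing and hence need not reach $\partial B_{r_i}(\tilde{x})$ at a controlled parameter. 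So as written the map $p_j^i\mapsto q_j^i$ is not defined.

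The fix is simple and keeps the argument intact: take $z_1,z_2\in Z$ with $d_Z(z_1,z_2)=\pi$ (the diameter is attained since $Z$ is compact), and lift the cone points $(2,z_1),(2,z_2)$ rather than $(1,z_j)$. Then $r_i^{-1}d(p_j^i,\tilde{x})\to 2$ and $r_i^{-1}d(p_1^i,p_2^i)\to 4$, so for $i$ large $p_j^i$ lies outside $B_{r_i}(\tilde{x})$ and the minimizing geodesic $\sigma_j$ from $\tilde{x}$ to $p_j^i$ does meet $\partial B_{r_i}(\tilde{x})$ at $q_j^i:=\sigma_j(r_i)$, with $d(q_j^i,p_j^i)=d(p_j^i,\tilde{x})-r_i$. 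The triangle inequality then gives
$$r_i^{-1}d(q_1^i,q_2^i)\ \ge\ r_i^{-1}d(p_1^i,p_2^i)-r_i^{-1}\bigl(d(p_1^i,\tilde{x})-r_i\bigr)-r_i^{-1}\bigl(d(p_2^i,\tilde{x})-r_i\bigr)\ \longrightarrow\ 4-1-1=2,$$
and since $r_i^{-1}d(q_1^i,q_2^i)\le 2$ always (extrinsic metric, through the center), we get $r_i^{-1}\mathrm{diam}(\partial B_{r_i}(\tilde{x}))\to 2$, contradicting the hypothesis. With this correction the proof is complete.
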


Here we use extrinsic metric on $\mathrm{diam}(\partial B_R(\tilde{x}))$, so we always have
$$\dfrac{\mathrm{diam}(\partial B_R(\tilde{x}))}{R}\le 2.$$

\section{A Critical Rescaling Argument}

In this section, we develop the critical rescaling argument, a key technical tool as mentioned in the introduction, to prove a special case of Theorem B: if there is $(\widetilde{Y},\tilde{y},G)\in\Omega(\widetilde{M},\Gamma)$ such that $p(G)$ is trivial, then for any $(\widetilde{W},\tilde{w},H)\in\Omega(\widetilde{M},\Gamma)$, $p(H)$ is also trivial (see Proposition \ref{not_type_0}). The proof of Theorem B is also modeled on the proofs in this section.

We first show that if $M$ is $k$-Euclidean at infinity, then for any $Y=\mathbb{R}^k\times C(Z)\in \Omega(M)$, there is a uniform gap between $Y$ and any Ricci limit space splitting off a $\mathbb{R}^{k+1}$-factor. This is indeed a direct consequence of being $k$-Euclidean at infinity.

\begin{lem}\label{gap_higher_sym}
	Let $M$ be an open $n$-manifold of $\mathrm{Ric}\ge 0$. If $M$ is $k$-Euclidean at infinity, then there is $\epsilon(M)>0$ such that for any $(Y,y)\in\Omega(M)$ and any Ricci limit space $\mathbb{R}^l\times X\in\mathcal{M}(n,0)$ with $l>k$, we have
	$$d_{GH}((Y,y),(\mathbb{R}^l\times X,(0,x)))\ge \epsilon(M).$$
\end{lem}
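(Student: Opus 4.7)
The plan is to argue by contradiction via a standard diagonal argument. Suppose no such constant $\epsilon(M) > 0$ exists; then there would be sequences $(Y_i, y_i) \in \Omega(M)$ and Ricci limit spaces $(\mathbb{R}^{l_i} \times X_i, (0, x_i)) \in \mathcal{M}(n,0)$ with $l_i > k$ and
$$d_{GH}\bigl((Y_i, y_i), (\mathbb{R}^{l_i} \times X_i, (0, x_i))\bigr) \to 0.$$
Since $k < l_i \le n$, I would pass to a subsequence and assume $l_i \equiv l$ is a constant integer with $l > k$.

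Each $(Y_i, y_i) \in \Omega(M)$ is the pointed GH-limit of some $(r_{i,j}^{-1} M, x)$ as $j \to \infty$. A standard diagonal selection produces scales $r_i := r_{i,j(i)} \to \infty$ with $d_{GH}((r_i^{-1} M, x), (Y_i, y_i)) < 1/i$; passing to a further subsequence, $(r_i^{-1} M, x) \overset{GH}\to (Y_\infty, y_\infty) \in \Omega(M)$. By the triangle inequality, $(Y_\infty, y_\infty)$ is simultaneously the pointed GH-limit of the sequence $(\mathbb{R}^l \times X_i, (0, x_i))$.

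The key observation is that each $\mathbb{R}^l \times X_i$ contains $l$ mutually orthogonal lines through its base point, and this feature is stable under pointed Gromov-Hausdorff convergence; hence $Y_\infty$ also contains $l$ mutually orthogonal lines through $y_\infty$. Iterating Theorem \ref{splitting} $l$ times then forces an isometric splitting $Y_\infty = \mathbb{R}^l \times X_\infty$ for some $X_\infty$. On the other hand, the $k$-Euclidean hypothesis together with $(Y_\infty, y_\infty) \in \Omega(M)$ forces $Y_\infty = \mathbb{R}^k \times C(Z_\infty)$ with $\mathrm{diam}(Z_\infty) < \pi$; the diameter bound prevents $C(Z_\infty)$ from containing any line, so the maximal Euclidean factor of $Y_\infty$ has dimension exactly $k$. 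This contradicts the $\mathbb{R}^l$-splitting established above, since $l > k$.

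The main obstacle is setting up the diagonal extraction carefully enough that the orthogonal lines in the approximating spaces $\mathbb{R}^l \times X_i$ genuinely produce $l$ mutually orthogonal lines through the base point of the limit $Y_\infty$, so that Theorem \ref{splitting} can be iterated to yield a full Euclidean factor of dimension $l$ sitting inside $Y_\infty$. Once that is secured, the rest of the argument is essentially tautological from the $k$-Euclidean hypothesis together with the fact that a metric cone with diameter strictly less than $\pi$ contains no line.
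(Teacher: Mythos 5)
Your proposal is correct and follows essentially the same route as the paper: argue by contradiction, diagonalize to realize the common limit as a tangent cone in $\Omega(M)$, and observe that the limit inherits an $\mathbb{R}^l$-factor with $l>k$, contradicting the $k$-Euclidean hypothesis. You spell out slightly more explicitly (via the $l$ mutually orthogonal lines and iterated splitting) why the limit of $\mathbb{R}^l\times X_i$ still splits off $\mathbb{R}^l$, a step the paper treats as implicit in its precompactness assertion; otherwise the two arguments agree.
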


\begin{proof}
	Suppose the contrary, then we would have a sequence $(Y_i,y_i)\in\Omega(M)$ such that as $i\to\infty$,
	$$d_{GH}((Y_i,y_i),(\mathbb{R}^{l_i}\times X_i),(0,x_i))\to 0,$$
	where $\mathbb{R}^{l_i}\times X_i\in \mathcal{M}(n,0)$ and $k<l_i\le n$. By pre-compactness, we can pass to a subsequence and have convergence
	$$(\mathbb{R}^{l_i}\times X_i,(0,x_i))\overset{GH}\longrightarrow(\mathbb{R}^{l_\infty}\times X_\infty,(0,x_\infty))$$
	with integer $l_\infty>k$. For the corresponding subsequence of $(Y_i,y_i)$, it has the same limit. By a standard diagonal argument, $(\mathbb{R}^{l_\infty}\times X_\infty,(0,x_\infty))$ is also a tangent cone of $M$ at infinity. This is a contradiction to the assumption that $M$ is $k$-Euclidean at infinity.
\end{proof}

\begin{rem}\label{rem_gap_diam}
	Note that for a metric cone $C(Z)\in\mathcal{M}(n,0)$, $C(Z)$ splits off a line if and only if $\mathrm{diam}(Z)=\pi$. From this perspective, Lemma \ref{gap_higher_sym} implies that if $M$ is $k$-Euclidean at infinity, then there exists $\eta(M)>0$ such that for any $\mathbb{R}^k\times C(Z)\in \Omega(M)$, $Z$ has diameter no more than $\pi-\eta(M)$.
\end{rem}

Next we prove a gap phenomenon between two classes of group actions on spaces in $\Omega(M)$, which is a key property needed in the critical rescaling argument.

\begin{lem}\label{gap_type_0}
	Let $M$ be an open $n$-manifold of $\mathrm{Ric}\ge 0$. Suppose that $\widetilde{M}$ is $k$-Euclidean at infinity. Then there exists a constant $\epsilon(M)>0$ such that the following holds.
	
	For two spaces $(\widetilde{Y}_j,\tilde{y}_j,G_j)\in \Omega(\widetilde{M},\Gamma)$ with $(\widetilde{Y}_j,\tilde{y}_j)=(\mathbb{R}^k\times C(Z_j),(0,z_j))$ $(j=1,2)$ if\\
	(1) $p(G_1)$ is trivial, and\\
	(2) there is $g\in G_2$ such that $p(g)\not= e$ and $d(g\cdot \tilde{y}_2,\tilde{y}_2)\le 1$,\\
	then
	$$d_{GH}((\widetilde{Y}_1,\tilde{y}_1,G_1),(\widetilde{Y}_2,\tilde{y}_2,G_2))\ge \epsilon(M).$$
\end{lem}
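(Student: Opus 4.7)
The plan is to argue by contradiction. Assume no such $\epsilon(M)$ exists; then there are sequences $(\widetilde{Y}_j^i,\tilde{y}_j^i,G_j^i)\in \Omega(\widetilde{M},\Gamma)$ with $(\widetilde{Y}_j^i,\tilde{y}_j^i)=(\mathbb{R}^k\times C(Z_j^i),(0,z_j^i))$ and elements $g_i\in G_2^i$ satisfying (1) and (2), yet with $\epsilon_i:=d_{GH}((\widetilde{Y}_1^i,\tilde{y}_1^i,G_1^i),(\widetilde{Y}_2^i,\tilde{y}_2^i,G_2^i))\to 0$. By Lemma \ref{gap_higher_sym} and Remark \ref{rem_gap_diam}, $\mathrm{diam}(Z_j^i)\le \pi-\eta(\widetilde{M})$ uniformly, so Proposition \ref{isom_gp_split} gives the splitting of isometry groups and the well-defined projection $p$. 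Condition (1) combined with Proposition \ref{cone_spit} implies that $G_1^i$ fixes the entire Euclidean slice $\mathbb{R}^k\times\{z_1^i\}$ pointwise.

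Next I would use the equivariant correspondence to pin down the size of $p(g_i)$. With an $\epsilon_i$-equivariant $\epsilon_i$-isometry $f_i\colon B_{1/\epsilon_i}(\tilde{y}_2^i)\to B_{1/\epsilon_i}(\tilde{y}_1^i)$, the element $g_i$ (having displacement $\le 1 \le 1/\epsilon_i$ at $\tilde{y}_2^i$) admits a group correspondent $\tilde{g}_i\in G_1^i$. Using that the $\mathbb{R}^k$-factor is the canonical maximal Euclidean splitting of the Ricci limits $\widetilde{Y}_j^i$ and hence preserved by $f_i$ up to $\epsilon_i$, one has $f_i(v,z_2^i)\approx (v,z_1^i)$; combined with $\tilde{g}_i(v,z_1^i)=(v,z_1^i)$, the near-equivariance yields $|p(g_i)v-v|\le C\epsilon_i$ for $|v|\le 1/(2\epsilon_i)$. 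Writing $p(g_i)(v)=A_iv+b_i$ with $A_i\in O(k)$, this forces $|b_i|\le C\epsilon_i$ and $\|A_i-I\|\le C\epsilon_i^2$.

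The key step is to produce, from the nontriviality $p(g_i)\ne e$, an iterate $h_i:=g_i^{n_i}\in G_2^i$ for a suitably chosen $n_i\in\mathbb{N}$ such that (a) $d(h_i\tilde{y}_2^i,\tilde{y}_2^i)\le 1/\epsilon_i$, so $h_i$ remains in the correspondence range, while (b) $p(h_i)$ displaces some point $(v_i,z_2^i)$ in the Euclidean slice, with $|v_i|$ inside the ball, by at least a fixed $c_0>0$. In the pure translation case $A_i=I$, take $n_i=\lceil 1/|b_i|\rceil$ so that $p(h_i)$ is a translation of unit length. In the pure rotation case, take $n_i\sim 1/\|\log A_i\|$, giving a macroscopic rotation whose displacement vanishes at its fixed point. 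In the general screw motion, decompose $\mathbb{R}^k=\ker(A_i-I)\oplus V_2^i$ and choose $n_i$ according to whichever of $|b_i|$ or $\|\log A_i\|$ is dominant, placing the test point $(v_i,z_2^i)$ near the fixed-point locus of the rotational component.

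Once $h_i$ is constructed, applying the equivariant correspondence once more yields $\tilde{h}_i\in G_1^i$ with $d(f_i(h_iq),\tilde{h}_if_i(q))\le \epsilon_i$. Since (1) and Proposition \ref{cone_spit} force $\tilde{h}_i$ to fix $\mathbb{R}^k\times\{z_1^i\}$ pointwise, specializing to $q=(v_i,z_2^i)$ yields $d(h_i(v_i,z_2^i),(v_i,z_2^i))\le C\epsilon_i$, contradicting (b) for $i$ large. The main obstacle is the construction in the third paragraph: the parameter $n_i$ must be simultaneously large enough that $p(h_i)$ is macroscopic in (b), yet small enough that the translational drift of $h_i$ at $\tilde{y}_2^i$ stays within the correspondence ball $B_{1/\epsilon_i}$. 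This is most delicate when $\|A_i-I\|$ vanishes at a rate incomparable to $|b_i|$, so that the fixed-point locus of the screw motion drifts toward the boundary of the ball; the case analysis in $\mathbb{R}^k$ must be executed carefully to balance (a) and (b).
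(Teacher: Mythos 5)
Your overall strategy -- argue by contradiction, use Lemma \ref{gap_higher_sym} to keep the splitting factor $\mathbb{R}^k$ stable along the sequence so that the projected actions $p(G_j^i)$ on $\mathbb{R}^k$ also converge, and then extract a nontrivial limit isometry from the $g_i$ -- is the same as the paper's. Your first two paragraphs are sound. The gap is in the third paragraph, and you correctly sense it yourself. Once you know $p(g_i)=(A_i,b_i)$ is nearly the identity in the large ball, you try to \emph{explicitly} produce an iterate $h_i=g_i^{n_i}$ with macroscopic displacement at a controlled test point $(v_i,z_2^i)$ while keeping $d(h_i\tilde y_2^i,\tilde y_2^i)\le 1/\epsilon_i$. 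The screw-motion case is the genuine difficulty: when $\|A_i-I\|\sim\epsilon_i^2$ and $|b_i|\sim\epsilon_i$, the approximate fixed-point locus of the rotational part sits at distance $\sim 1/\epsilon_i$, and the choices of $n_i$ that make the rotation macroscopic simultaneously push the translational drift at $\tilde y_2^i$ to order $1/\epsilon_i$ and push the test point $(v_i,z_2^i)$ toward the boundary of the comparison ball. A correct balance of constants almost certainly exists, but you have not exhibited it, and as written the argument is incomplete precisely at its crux.

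The paper avoids this casework entirely with a single observation: instead of hunting for a good iterate, pass to the full closed subgroup $H_i=\overline{\langle p(g_i)\rangle}\subseteq\mathrm{Isom}(\mathbb{R}^k)$. For \emph{any} nontrivial closed subgroup $H$ of $\mathrm{Isom}(\mathbb{R}^k)$ there is a universal lower bound (the paper uses $D_1(H)\ge 1/20$) on the displacement of $H$ on $B_1(0)$ -- a Zassenhaus/Margulis-type fact that already packages the translation/rotation/screw case split once and for all. Then one takes a subsequential Hausdorff limit $H_i\to H_\infty\subseteq p(G_\infty)$ (after first passing to a common GH limit $(\widetilde Y_\infty,\tilde y_\infty,G_\infty)$ of the two sequences), and the displacement bound passes to the limit, so $H_\infty$ is nontrivial, contradicting $p(G_\infty)=\{e\}$ forced by the first sequence. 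Your proof would be closed immediately if you replaced your iterate construction with this subgroup argument, or equivalently if you proved, as a standalone lemma, the uniform displacement bound for nontrivial closed subgroups of $\mathrm{Isom}(\mathbb{R}^k)$ (this is exactly the case analysis you flag as delicate, but done once cleanly on the model space rather than entangled with the $\epsilon_i$-approximations).
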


For the Euclidean space $\mathbb{R}^k$ with isometric $G$-action and a non-identity element $g\in G$, if $d(g\cdot 0,0)\le 1$, then it is obvious that
$$d_{GH}((\mathbb{R}^k,0,G),(\mathbb{R}^k,0,\{e\}))\ge 1/2.$$
Let $(\widetilde{Y}_1,\tilde{y}_1,G_1)$ and $(\widetilde{Y}_2,\tilde{y}_2,G_2)$ be two spaces in $\Omega(\widetilde{M},\Gamma)$ as in Lemma \ref{gap_type_0}. Roughly speaking Lemma \ref{gap_higher_sym} assures that for $\epsilon$ sufficiently small, any $\epsilon$-approximation from $(\widetilde{Y}_1,\tilde{y}_1)$ to $(\widetilde{Y}_2,\tilde{y}_2)$ can not map $\mathbb{R}^k$-factor to non-Euclidean cone factor $C(Z_2)$. In other words, an $\epsilon$-approximation map should map $\mathbb{R}^k$-factor to $\mathbb{R}^k$-factor. Together with the $p(G_j)$-action on $\mathbb{R}^k$-factor, we see that there should be a gap between $(\widetilde{Y}_1,\tilde{y}_1,G_1)$ and $(\widetilde{Y}_2,\tilde{y}_2,G_2)$.

\begin{proof}[Proof of Lemma \ref{gap_type_0}]
	Suppose the contrary, then we have two sequences in $\Omega(M,\Gamma)$: $\{(\widetilde{Y}_{i1},\tilde{y}_{i1},G_{i1})\}$ and $\{(\widetilde{Y}_{i2},\tilde{y}_{i2},G_{i2})\}$ such that\\
	(1) $p(G_{i1})$ is trivial,\\
	(2) there is $g_i\in G_{i2}$ such that $p(g_i)\not=e$ and $d(g_i\cdot \tilde{y}_{i2},\tilde{y}_{i2})\le 1$,\\
	(3) $d_{GH}((\widetilde{Y}_{i1},\tilde{y}_{i1},G_{i1}),(\widetilde{Y}_{i2},\tilde{y}_{i2},G_{i2}))\to 0$ as $i\to\infty$.\\
	Passing to some subsequences if necessary, the above two sequences converge to the same limit:
	$$(\widetilde{Y}_{i1},\tilde{y}_{i1},G_{i1})\overset{GH}\longrightarrow(\widetilde{Y}_\infty,\tilde{y}_\infty,G_\infty),$$
	$$(\widetilde{Y}_{i2},\tilde{y}_{i2},G_{i2})\overset{GH}\longrightarrow(\widetilde{Y}_\infty,\tilde{y}_\infty,G_\infty),$$
	with $(\widetilde{Y}_\infty,\tilde{y}_\infty)=(\mathbb{R}^k\times C(Z_\infty),(0,z_\infty))$. By Lemma \ref{gap_higher_sym}, $C(Z_\infty)$ does not split off any line, and thus
	$$(\mathbb{R}^k\times \{z_{i1}\},\tilde{y}_{i1},p(G_{i1}))\overset{GH}\longrightarrow(\mathbb{R}^k\times \{z_\infty\},\tilde{y}_\infty,p(G_\infty)),$$
	$$(\mathbb{R}^k\times \{z_{i2}\},\tilde{y}_{i2},p(G_{i2}))\overset{GH}\longrightarrow(\mathbb{R}^k\times \{z_\infty\},\tilde{y}_\infty,p(G_\infty))$$
	(cf. Remark \ref{rem_orb}). From the first sequence, we see that $p(G_\infty)$ is trivial because $p(G_{i1})=\{e\}$. On the other hand, $p(G_{i2})$ contains some element $\beta_i$ with $d_{\mathbb{R}^k}(\beta_i\cdot0,0)\le 1$. $\beta_i$ sub-converges to some element $\beta_\infty \in G_\infty$ with $d(\beta_\infty\cdot 0,0)\le 1$. If $\beta_\infty\not =e$, then $p(G_\infty)$ is non-trivial. If $\beta_\infty=e$, then we consider the subgroup $H_i=\langle\beta_i\rangle$. The sequence of subgroups $H_i$ sub-converges to some non-trivial subgroup $H_\infty$ of $p(G_\infty)$ because $D_1(H_i)\ge 1/20$, where $D_1(H_i)$ is the displacement of $H_i$ on $B_1(0)\subseteq \mathbb{R}^k$. In either case, $p(G_\infty)$, a contradiction.
\end{proof}

\begin{rem}
	The gap in Lemma \ref{gap_higher_sym} plays a key role in the above proof; it guarantees that symmetries on the non-Euclidean cone factor and on the Euclidean factor can not interchange. If there is no gap between the non-Euclidean cone factor $C(Z)$ and spaces splitting off lines, then Lemma \ref{gap_type_0} fails.  As an example, we construct a continuous family of metric cones $(Y_t,y_t,G_t)$ $(-\delta\le t\le\delta)$ such that $Y_t=\mathbb{R}^2\times C(Z_t)$, where $\mathrm{diam}(Z_t)\le \pi$. As $t\to 0$,
	$$d_{GH}((\mathbb{R}^2\times \{z_{-t}\},(0,z_{-t}),p(G_{-t})),(\mathbb{R}^2\times z_t,(0,z_t),p(G_t)))\not\to 0.$$
	For $|t|<\delta$ small, we put $Y_t=\mathbb{R}^2\times C(S^1_t)$, where $S^1_t$ is the round circle of diameter $\pi-|t|$. When $t=0$, then $Y_t=\mathbb{R}^4$. Next we define $G_t$-action on $Y_t$. For $t>0$, $G_t=S^1$ acting as rotations on the $C(S^1_t)$-factor; while for $t\le 0$, $G_t=S^1$ acting as rotations about the origin on $\mathbb{R}^2$-factor. It is clear that $(Y_t,y_t,G_t)$ is a continuous path in the equivariant Gromov-Hausdorff topology. However, $p(G_t)$ is trivial for $t>0$ while $p(G_t)=S^1$ for $t<0$; they can not be arbitrarily close as $t\to 0$.
\end{rem}

We are ready to prove the following rudimentary version of Theorem B.

\begin{prop}\label{not_type_0}
	Let $(M,x)$ be an open $n$-manifold of $\mathrm{Ric}\ge 0$, whose universal cover is $k$-Euclidean at infinity. If there is $(\widetilde{Y},\tilde{y},G)\in\Omega(\widetilde{M},\Gamma)$ such that $p(G)$ is trivial, then for any space $(\widetilde{W},\tilde{w},H)\in\Omega(\widetilde{M},\Gamma)$, $p(H)$ is also trivial.
\end{prop}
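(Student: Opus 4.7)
The plan is to argue by contradiction via the connectedness of $\Omega(\widetilde{M},\Gamma)$, combined with Lemma \ref{gap_type_0} and a rescaling trick. A key preliminary observation is that $\Omega(\widetilde{M},\Gamma)$ is closed under positive rescaling: if $(\widetilde{Z},\tilde{z},K)$ arises as a limit of $(r_i^{-1}\widetilde{M},\tilde{x},\Gamma)$, then for any $\lambda>0$ the triple $(\lambda\widetilde{Z},\tilde{z},K)$ arises as a limit of $((\lambda^{-1}r_i)^{-1}\widetilde{M},\tilde{x},\Gamma)$ with $\lambda^{-1}r_i\to\infty$; moreover, rescaling preserves the product cone form $\mathbb{R}^{k}\times C(Z)$, preserves the projection $p$ of Proposition \ref{isom_gp_split}, and scales equivariant Gromov-Hausdorff distance linearly.

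Now suppose, toward a contradiction, that some $(\widetilde{W},\tilde{w},H)\in\Omega(\widetilde{M},\Gamma)$ has $p(H)\neq\{e\}$. Let $\epsilon(M)>0$ be the gap constant from Lemma \ref{gap_type_0} and fix any $\epsilon<\epsilon(M)$. Since $\Omega(\widetilde{M},\Gamma)$ is compact and connected, I choose a finite $\epsilon$-chain
$$(\widetilde{Y},\tilde{y},G)=(\widetilde{Z}_{0},\tilde{z}_{0},K_{0}),\ \ldots,\ (\widetilde{Z}_{l},\tilde{z}_{l},K_{l})=(\widetilde{W},\tilde{w},H)$$
in the equivariant Gromov-Hausdorff topology. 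Let $j_{0}=\min\{\,j\ge 1:\,p(K_{j})\neq\{e\}\,\}$; this exists since $p(K_{0})=p(G)=\{e\}$ and $p(K_{l})=p(H)\neq\{e\}$, and by minimality $p(K_{j_{0}-1})=\{e\}$. I then pick $k\in K_{j_{0}}$ with $p(k)\neq e$, and set $D=d(k\cdot\tilde{z}_{j_{0}},\tilde{z}_{j_{0}})\ge 0$.

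If $D\le 1$, Lemma \ref{gap_type_0} applies immediately to the transition pair $(\widetilde{Z}_{j_{0}-1},\tilde{z}_{j_{0}-1},K_{j_{0}-1})$ and $(\widetilde{Z}_{j_{0}},\tilde{z}_{j_{0}},K_{j_{0}})$, forcing their distance to be at least $\epsilon(M)>\epsilon$ and contradicting the chain bound. If instead $D>1$, I perform the \emph{critical rescaling}: dilate both members of the transition pair by $\lambda=1/D\in(0,1)$. By the preliminary observation, the rescaled pair still lies in $\Omega(\widetilde{M},\Gamma)$ and still has the form $\mathbb{R}^{k}\times C(Z)$; furthermore $k$ now displaces the basepoint by exactly $\lambda D=1$, so Lemma \ref{gap_type_0} applies to the rescaled pair and yields a gap of at least $\epsilon(M)$. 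Yet the equivariant Gromov-Hausdorff distance of the rescaled pair is at most $\lambda\epsilon<\epsilon<\epsilon(M)$, again a contradiction.

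The main obstacle I anticipate is making the critical rescaling step fully rigorous: one must verify that dilating a single equivariant tangent cone at infinity by an arbitrary factor really does produce another element of $\Omega(\widetilde{M},\Gamma)$ with all the structure (cone factorization, the projection map $p$, the same uniform gap constant $\epsilon(M)$ coming from $k$-Euclidean-ness of $\widetilde{M}$) intact, and that equivariant Gromov-Hausdorff distance behaves correctly under dilation. Once this is secured, the whole argument is the statement that the subset $\mathcal{A}=\{(\widetilde{W},\tilde{w},H)\in\Omega(\widetilde{M},\Gamma)\,:\,p(H)=\{e\}\}$ is both open and closed: closedness follows from passing projections to equivariant limits, and openness is exactly what the rescaled gap delivers. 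Connectedness of $\Omega(\widetilde{M},\Gamma)$ together with $(\widetilde{Y},\tilde{y},G)\in\mathcal{A}$ then forces $\mathcal{A}=\Omega(\widetilde{M},\Gamma)$.
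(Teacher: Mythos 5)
Your preliminary observation --- that $\Omega(\widetilde{M},\Gamma)$ is closed under positive rescaling, and that the cone structure and projection $p$ are preserved --- is correct, and exploiting connectedness via a finite $\epsilon$-chain is a reasonable start. However, there is a genuine gap at the step where you dilate the transition pair by $\lambda=1/D<1$ and claim the resulting equivariant Gromov--Hausdorff distance is at most $\lambda\epsilon$. Pointed (and equivariant pointed) Gromov--Hausdorff distance does \emph{not} scale linearly under a blow-down $\lambda<1$. The reason is that $\epsilon$-closeness in the pointed sense is a statement about balls of radius roughly $1/\epsilon$ around the basepoints; after rescaling by $\lambda<1$, a ball of that radius in the new metric is a strictly larger ball in the old metric, about which the original $\epsilon$-approximation carries no information. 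Tracing the definitions, the only bound one extracts is of the form $\le\epsilon/\lambda=D\epsilon$, which grows with $D$ rather than shrinking. Since $D$ depends on the chain (hence on $\epsilon$, hence circularly), there is no way to guarantee $D\epsilon<\epsilon(M)$, and the application of Lemma~\ref{gap_type_0} to the rescaled pair is unjustified. For the same reason the set $\mathcal{A}=\{p(H)=\{e\}\}$ is not obviously open: a nearby space may carry a nontrivial $p(H)$-element whose displacement at $0$ is enormous, hence invisible at any bounded GH resolution --- this is precisely the phenomenon illustrated in Remark~\ref{rem_scales_2}.

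The paper's critical rescaling argument is built specifically to sidestep this. Instead of rescaling a transition pair of limit spaces, it works with a single sequence of manifolds $(N_i,q_i,\Gamma_i)$ and, for each $i$, defines a set $L_i\subseteq[1,t_i]$ of scales at which $(lN_i,q_i,\Gamma_i)$ is $\epsilon/3$-close to some bad action with \emph{controlled displacement} $\le 1$; the critical scale $l_i$ is chosen near $\inf L_i$. Passing to the limit along $(l_iN_i,q_i,\Gamma_i)$ then forces $p(G')$ to be simultaneously nontrivial (since $l_i\in L_i$) and trivial --- because if $p(G')$ were nontrivial with some $p(g')\ne e$, one could divide $l_i$ by a \emph{fixed} constant $c$ and conclude $l_i/c\in L_i$ for $i$ large, contradicting $l_i\approx\inf L_i$. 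The fixed constant $c$ is harmless because rescaling a convergent sequence of spaces by the same constant preserves convergence; the problematic blow-down of a chain-dependent, unbounded factor $D$ never occurs. In short: the critical rescaling must be applied to the converging sequence, not to a pair of limits, and the displacement constraint built into $L_i$ is what keeps the needed rescaling constant bounded.
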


\begin{proof}
	We argue by contradiction. Suppose that there are $r_i\to\infty$ and $s_i\to\infty$ such that
	$$(r_{i}^{-1}\widetilde{M},\tilde{x},\Gamma)\overset{GH}\longrightarrow(\widetilde{Y}_1,\tilde{y}_1,G_1),$$
	$$(s_{i}^{-1}\widetilde{M},\tilde{x},\Gamma)\overset{GH}\longrightarrow(\widetilde{Y}_2,\tilde{y}_2,G_2),$$
	where $p(G_1)$ is trivial but $p(G_2)$ is not. Scaling down the sequence $s_i^{-1}$ by a constant if necessary, we assume that there is $g\in G_2$ such that $p(g_2)\not=e$ and $d(g\cdot\tilde{y}_2,\tilde{y}_2)\le 1$. We pass to a subsequence and assume that $t_i:=s_i^{-1}/r_i^{-1}\to \infty$. This enables us to regard the above first sequence as a rescaling of the second one. Put
	$$(N_i,q_i,\Gamma_i)=(s_i^{-1}\widetilde{M},\tilde{x},\Gamma).$$
	In this way, we can rewrite these two convergent sequences as ($t_i\to\infty$):
	$$(N_i,q_i,\Gamma_i)\overset{GH}\longrightarrow (\widetilde{Y}_1,\tilde{y}_1,G_1),$$
	$$(t_iN_i,q_i,\Gamma_i)\overset{GH}\longrightarrow (\widetilde{Y}_2,\tilde{y}_2,G_2).$$
	
	We look for a contradiction in some intermediate rescaling sequence. For each $i$, we define a set of scales
	\begin{align*}
		L_i:=\{\  1\le l\le t_i\ |\ & d_{GH}((l{N}_i,q_i,\Gamma_i),(W,w,H))\le \epsilon/3 \\
		& \text{ for some space $(W,w,H)\in \Omega(\widetilde{M},\Gamma)$}\\
		& \text{ such that $H$ has some element $h$}\\
		& \text{ with } p(h)\not= e \text{ and } d(h\cdot \tilde{w},\tilde{w})\le 1 \},
	\end{align*}
	where $\epsilon=\epsilon({M})>0$ is the constant in Lemma \ref{gap_type_0}. It is clear that $t_i\in L_i$ for all $i$ large, thus $L_i$ is non-empty. We choose $l_i\in L_i$ such that $\inf L_i\le l_i\le \inf L_i+1/i$. We regard this $l_i$ as the critical rescaling sequence.
	
	\textbf{Claim 1:} $l_i\to\infty$. Suppose that $l_i$ subconverges to $C<\infty$, then for this subsequence, we can pass to a subsequence again and obtain the convergence
	$$(l_iN_i,q_i,\Gamma_i)\overset{GH}\longrightarrow (C\cdot\widetilde{Y}_1,\tilde{y}_1,G_1).$$
	Since $l_i\in L_i$, by definition of $L_i$ and the above convergence, we conclude that
	$$d_{GH}((C\cdot\widetilde{Y}_1,\tilde{y}_1,G_1),(W,w,H))\le \epsilon/2$$
	for some space $(W,w,H)$ such that there is $h\in H$ with $p(h)\not= e$ and $d(h\cdot\tilde{w},\tilde{w})\le 1$. On the other hand, on $(C\cdot\widetilde{Y}_1,\tilde{y}_1,G_1)$, $p(G_1)$ is trivial. This is a contradiction to the choice of $\epsilon$ and Lemma \ref{gap_type_0}. Hence Claim 1 is true.
	
	Next we consider the convergence
	$$(l_iN_i,q_i,\Gamma_i)\overset{GH}\longrightarrow (\widetilde{Y}',\tilde{y}',G')\in\Omega(\widetilde{M},\Gamma).$$
	We will derive a contradiction by ruling out all the possibilities of $p(G')$-action on the $\mathbb{R}^k$-factor of $\widetilde{Y}'$.
	
	\textbf{Claim 2:} $p(G')$ is non-trivial. For each $i$, because $l_i\in L_i$, we know that
	$$d_{GH}((l_iN_i,q_i,\Gamma_i),({W}_i,{w}_i,K_i))\le \epsilon/3$$
	for some $({W}_i,{w}_i)\in \Omega(\widetilde{M})$ with $K_i$-action such that there is $k_i\in K_i$ with $p(k_i)\not= e$ and $d(k_i\cdot\tilde{w}_i,\tilde{w}_i)\le 1$. Since $(l_iN_i,q_i,\Gamma_i)$ converges to $(\widetilde{Y}',\tilde{y}',G')$, the limit space satisfies
	$$d_{GH}((\widetilde{Y}',\tilde{y}',G'),({W}_i,{w}_i,K_i))\le \epsilon/2$$
	for $i$ large. By Lemma \ref{gap_type_0}, $p(G')$ is nontrivial.
	
	By Claim 2, there is some $g'\in G'$ such that $p(g')\not= e$. We put $d:=d(g'\cdot \tilde{y}',\tilde{y}')$. If $d\le 1$, we consider the scaling sequence $l_i/2$:
	$$(l_i/2 \cdot N_i,q_i,\Gamma_i)\overset{GH}\longrightarrow(1/2 \cdot\widetilde{Y}',\tilde{y}',G').$$
	Note that on $(1/2 \cdot\widetilde{Y}',\tilde{y}',G')$, there is some element $g'\in G'$ with $p(g')\not=e$ and $d(g'\cdot\tilde{y}',\tilde{y}')\le 1/2$.
	This shows that $l_i/2\in L_i$ for $i$ large, which is a contradiction our choice of $l_i$ with $\inf L_i\le l_i\le \inf L_i+1/i$. If $d>1$, then we consider the scaling sequence $l_i/(2d)$:
	$$(l_i/(2d) \cdot N_i,q_i,\Gamma_i)\overset{GH}\longrightarrow(1/(2d) \cdot\widetilde{Y},\tilde{y},H').$$
	and a similar contradiction would arise because $l_i/(2d)\in L_i$. In any case, we see a contradiction. This completes the proof.
\end{proof}

\begin{rem}\label{rem_scales_2}
  In the above proof when defining $L_i$, we include all the contradictory $H$-actions with $p(H)\not= \{e\}$ and a constraint on its displacement. In particular, this allows $p(H)$ to be different from $p(G_2)$. Doing so is necessary. For example, if $p(G_2)=\mathbb{Z}$ and we require $p(H)=\mathbb{Z}$ when defining $L_i$, then the intermediate sequence
	$$(l_iN_i,q_i,\Gamma_i)\overset{GH}\longrightarrow (\widetilde{Y}',\tilde{y}',G')$$
	may have a limit group with $p(G')=\mathbb{Z}_p$, where $p$ is a large integer. This $\mathbb{Z}_p$ acts on $\mathbb{R}^k$ as rotations on a plane about a point far away from $0$, so that $(\mathbb{R}^k,0,\mathbb{Z}_p)$ and $(\mathbb{R}^k,0,\mathbb{Z})$ are very close. In this situation, one can not derive a contradiction by dividing $l_i$ by a constant.
\end{rem}

To conclude this section, we use Proposition \ref{not_type_0} to prove Corollary \ref{n_unique_up} in dimension $3$.

\begin{cor}\label{3_unique}
	Let $M$ be an open $3$-manifold with $\mathrm{Ric}\ge 0$. Suppose that $\widetilde{M}$ has Euclidean volume growth and the unique tangent cone at infinity. Then $\pi_1(M)$ is finitely generated.
\end{cor}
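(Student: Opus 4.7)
The plan is to argue by contradiction, combining Proposition \ref{not_type_0} with Wilking's reduction, Gromov's short generators, and classical rigidity in the boundary cases. Suppose $\pi_1(M)$ is not finitely generated. By Theorem \ref{red_W} we may assume $\Gamma=\pi_1(M,x)$ is abelian and non-finitely generated, so it admits a sequence of short generators $\gamma_i\in\Gamma$ with lengths $l_i=d(\gamma_i\tilde x,\tilde x)\to\infty$. Applying Lemma \ref{non_cnt_orb} with $r_i=l_i$ yields an equivariant tangent cone $(\widetilde Y,\tilde y,G)\in\Omega(\widetilde M,\Gamma)$ whose orbit $G\cdot\tilde y$ is not connected.

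Next I would combine Theorem \ref{vol_cone} with the uniqueness hypothesis: $\widetilde M$ is $k$-Euclidean at infinity for a single $k\in\{0,1,2,3\}$, with unique tangent cone $\widetilde Y=\mathbb R^k\times C(Z)$ and $\mathrm{diam}(Z)<\pi$. Propositions \ref{cone_spit} and \ref{isom_gp_split} identify $G\cdot\tilde y$ with $p(G)\cdot 0\subset\mathbb R^k$, so the disconnected orbit forces $p(G)\ne\{e\}$. By the contrapositive of Proposition \ref{not_type_0}, every $(\widetilde W,\tilde w,H)\in\Omega(\widetilde M,\Gamma)$ then satisfies $p(H)\ne\{e\}$.

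The boundary cases of $k$ are immediate. For $k=0$, Proposition \ref{0_sym} forces $\pi_1(M)$ to be finite, contradicting our assumption. For $k=3$, the unique tangent cone is $\mathbb R^3$, so the asymptotic volume ratio of $\widetilde M$ equals $\omega_3$; Colding's volume rigidity gives $\widetilde M\cong\mathbb R^3$, and Bieberbach's theorem then makes $\pi_1(M)$ finitely generated, another contradiction.

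The main obstacle is the intermediate range $k\in\{1,2\}$, where I would produce an equivariant tangent cone $(\widetilde W,\tilde w,H)\in\Omega(\widetilde M,\Gamma)$ with $p(H)=\{e\}$ to contradict the blanket non-triviality obtained above. My plan is to take a rescaling $s_j\to\infty$ with $s_j\gg l_j$, so that each fixed short generator $\gamma_i$ has length $l_i/s_j\to 0$ in the rescaled metric and collapses to the identity in the limit; since $\Gamma$ is abelian, the limit $H$ is a closed abelian subgroup of $\mathrm{Isom}(\widetilde Y)=\mathrm{Isom}(\mathbb R^k)\times\mathrm{Isom}(Z)$. The delicate step, which I expect to be the hardest, is to verify $p(H)=\{e\}$ using the abelian structure of $\Gamma$, the low-dimensional cross-section $Z$ (with $\dim Z\le 1$ in dimension three), and the rigidity imposed by the unique tangent cone, which together should rule out any nontrivial asymptotic translation or rotation in the $\mathbb R^k$-factor at this scale.
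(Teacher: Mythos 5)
Your setup, abelian reduction, use of Lemma \ref{non_cnt_orb}, and the boundary cases $k=0$ and $k=3$ all follow the paper. The problems are in the ``intermediate'' cases you flag as hardest, and there are two separate issues.

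First, you treat $k=2$ as a genuinely hard case, but it is not: for a three-dimensional metric cone of the form $\mathbb{R}^2\times C(Z)$ the singular set has codimension at least $2$ by Cheeger--Colding, so $C(Z)$ is forced to be $\mathbb{R}$ and $\widetilde Y=\mathbb{R}^3$; the $k=2$ case therefore reduces directly to the $k=3$ rigidity you already handled. This is the shortcut the paper uses, and missing it leaves $k=2$ hanging on your (flawed) general argument.

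Second, and more seriously, your plan for producing a limit with $p(H)=\{e\}$ goes in the wrong direction. You propose rescaling by $s_j\gg l_j$, i.e.\ blowing \emph{down} further so that fixed short generators collapse. But the equivariant limit at scale $s_j$ is controlled by those elements of $\Gamma$ whose lengths are comparable to $s_j$, and such elements always exist since $\Gamma$ is infinite; making a few specific $\gamma_i$ collapse does not trivialize $p(H)$. Worse, blowing down makes things strictly harder: for a $\mathbb{Z}$ translation orbit in $\mathbb{R}\times\{z\}$, rescaling down densifies the orbit and the limit of $p(\cdot)$ is all of $\mathbb{R}$, not $\{e\}$; for a $\mathbb{Z}_2$ reflection orbit, rescaling down moves the fixed point of the reflection to $0$ and the limit $p(H)$ is $\mathbb{Z}_2$. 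Neither gives the desired trivial projection. The correct move, which the paper makes, is the opposite rescaling: since $\Gamma$ is abelian, the non-connected orbit $G\cdot\tilde y\subset\mathbb{R}\times\{z\}$ is either a $\mathbb{Z}$ translation orbit or a $\mathbb{Z}_2$ reflection orbit, and in both cases $p(G)$ acts \emph{freely at $0$}. One then passes to the equivariant tangent cone of $(\widetilde Y,\tilde y,G)$ \emph{at the point $\tilde y$} (a blow-\emph{up}, $j\widetilde Y$ with $j\to\infty$), where only the isotropy at $\tilde y$ survives; freeness gives $p(H)=\mathrm{Iso}_0p(G)=\{e\}$. A diagonal argument places this $(\widetilde Y,\tilde y,H)$ in $\Omega(\widetilde M,\Gamma)$, and Proposition \ref{not_type_0} then yields the contradiction.
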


\begin{proof}
	We argue by contradiction. Suppose that $\Gamma$ has infinitely many short generators $\{\gamma_1,...,\gamma_i,...\}$. Let $r_i=d(\gamma_i\cdot\tilde{x},\tilde{x})\to\infty$. Passing to a subsequence, we consider an equivariant tangent cone of $(\widetilde{M},\Gamma)$ at infinity:
	$$(r_{i(j)}^{-1}\widetilde{M},\tilde{x},\Gamma)\overset{GH}\longrightarrow(\widetilde{Y},\tilde{y},G).$$
	We know that $\widetilde{Y}$ is a metric cone of Hausdorff dimension $3$ (Theorem \ref{vol_cone}), and orbit $G \cdot\tilde{y}$ is not connected (Lemma \ref{non_cnt_orb}). By Theorem \ref{red_W}, we assume that $\Gamma$ is abelian.
	
	$\widetilde{Y}$ is isometric to $\mathbb{R}^k\times C(Z)$ with vertex $\tilde{y}=(0,z)$, where $k\in\{0,1,2,3\}$, $C(Z)$ has vertex $z$ and $\mathrm{diam}(Z)<\pi$. If $k=0$, then $\Gamma$ is finite (Proposition \ref{0_sym}). If $k=3$, then $\widetilde{Y}=\mathbb{R}^3$, and consequently $\widetilde{M}$ is isometric to $\mathbb{R}^3$ \cite{Co}. If $k=2$, then $\widetilde{Y}$ is actually isometric to $\mathbb{R}^3$ according to the fact that the singular set of $\widetilde{Y}$ has co-dimension at least $2$ \cite{CC2}. It remains to handle the case $\widetilde{Y}=\mathbb{R}\times C(Z)$. Since the non-connected orbit $G \cdot\tilde{y}$ is contained in $\mathbb{R}\times \{z\}$, together with the assumption that $G$ is abelian, we see that the orbit $G\cdot \tilde{y}$ is either a $\mathbb{Z}$ translation orbit, or a $\mathbb{Z}_2$ reflection orbit in $\mathbb{R}\times \{z\}$. In either case, $p(G)$-action is free at $0$. For the equivariant tangent cone of $(\widetilde{Y},\tilde{y},G)$ at $\tilde{y}$ ($j\to\infty$):
	$$(j\widetilde{Y},\tilde{y},G)\overset{GH}\longrightarrow(\widetilde{Y},\tilde{y},H),$$
	it is clear that $p(H)=\{e\}$. By a standard diagonal argument, we can find $s_j\to \infty$ such that
	$$(s_j^{-1}\widetilde{M},\tilde{x},\Gamma)\overset{GH}\longrightarrow(\widetilde{Y},\tilde{y},H).$$
	Right now there are two equivariant tangent cones of $(\widetilde{M},\Gamma)$ at infinity: $(\widetilde{Y},\tilde{y},H)$ and $(\widetilde{Y},\tilde{y},G)$,
	with $p(H)=\{e\}$ but $p(G)\not=\{e\}$, a contradiction to Proposition \ref{not_type_0}.
\end{proof}

\section{Stability of Isometric Actions}

In this section, for an isometric $G$-action on a Riemannian manifold $M$, we always assume that $G$ is a closed subgroup of $\mathrm{Isom}(M)$. The goal is the following stability result on isometric actions on any compact manifold $M$, which will be used in the proof of Theorem B with $M$ being the unit sphere $S^{k-1}$.

\begin{prop}\label{stability}
	Let $(M,G)$ be a compact Riemannian manifold with isometric $G$-action. Then there exists a constant $\epsilon>0$, depending on $(M,G)$, such that the following holds.
	
	For any isometric $H$-action on $M$, if
	$$d_{GH}((M,G),(M,H))\le\epsilon,$$
	then either $H$-action is conjugate to $G$-action by an isometry, or $\dim(H)<\dim(G)$.
\end{prop}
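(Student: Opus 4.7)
The plan is to argue by contradiction, combining the compactness of $\mathrm{Isom}(M)$ as a Lie group with a Montgomery--Zippin type rigidity for nearby closed subgroups. Suppose no such $\epsilon$ exists. Then there is a sequence of isometric $H_i$-actions with $d_{GH}((M,G),(M,H_i))\to 0$, each $H_i$ not conjugate to $G$ by an isometry of $M$, and $\dim(H_i)\ge\dim(G)$ for every $i$. Since $M$ is compact, $\mathrm{Isom}(M)$ is a compact Lie group, so the space of its closed subgroups is compact in the Hausdorff (Chabauty) topology; after passing to a subsequence, $H_i\to H_\infty$ for some closed subgroup $H_\infty\subseteq\mathrm{Isom}(M)$.

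I next identify $H_\infty$ with $G$ up to isometric conjugation. Hausdorff convergence of subgroups on the fixed compact $M$ forces $(M,H_i)\to(M,H_\infty)$ in the equivariant Gromov--Hausdorff topology, because the orbits $H_i\cdot m$ converge to $H_\infty\cdot m$ in Hausdorff distance uniformly in $m$. Comparing with the standing hypothesis, $(M,H_\infty)$ and $(M,G)$ are at equivariant Gromov--Hausdorff distance zero, which unwinds to the existence of an isometry $\phi\in\mathrm{Isom}(M)$ and a Lie group isomorphism $\psi:G\to H_\infty$ with $\phi(g\cdot m)=\psi(g)\cdot\phi(m)$; this forces $\psi(g)=\phi g\phi^{-1}$ and hence $\phi G\phi^{-1}=H_\infty$. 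In particular $\dim H_\infty=\dim G$.

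Now I constrain the dimensions along the sequence. Dimension is lower semicontinuous under Hausdorff convergence of closed subgroups: if $\dim H_i\ge d$, then choosing an orthonormal basis of a $d$-plane in $\mathrm{Lie}(H_i)$ and subconverging yields a $d$-plane in $\mathrm{Lie}(H_\infty)$. Combined with $\dim H_i\ge\dim G=\dim H_\infty$, one has $\dim H_i=\dim G$ for all large $i$ after a further subsequence. I then invoke the Montgomery--Zippin rigidity theorem: in a compact Lie group, there is a Hausdorff neighborhood of $H_\infty$ in which every closed subgroup is conjugate to a subgroup of $H_\infty$. Thus eventually $H_i$ is conjugate to some $K_i\subseteq H_\infty$ with $\dim K_i=\dim H_\infty$, so $K_i\supseteq H_\infty^0$ and $[H_\infty:K_i]<\infty$. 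To promote $K_i$ to $H_\infty$ itself, I observe that every coset of $H_\infty^0$ in $H_\infty$ arises as a Hausdorff limit of cosets from conjugates of $H_i$, so every component of $H_\infty$ must be represented; this forces $K_i=H_\infty$ for large $i$ and contradicts the standing assumption.

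The step I expect to be the main obstacle is the final conjugacy-from-subconjugacy argument: even after Montgomery--Zippin subconjugacy and matching of dimensions, one must rule out the possibility that each $H_i$ sits as a proper open subgroup of a conjugate of $H_\infty$. This requires careful bookkeeping on the finite component group $H_\infty/H_\infty^0$ via Hausdorff limits of cosets, a step which is automatic when $H_\infty$ is connected but needs genuine work in general.
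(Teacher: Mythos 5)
Your proof is correct in substance, and it is a genuinely different route from the paper's, even though both begin identically by translating equivariant Gromov--Hausdorff closeness of $(M,H_i)$ to $(M,G)$ into Hausdorff closeness of the closed subgroups $H_i$ and $G$ inside the compact Lie group $\mathrm{Isom}(M)$. After that the two arguments diverge. The paper works at the level of \emph{homomorphisms}: it defines a nearest-point map $\phi_i:H_i\to G$, checks it is a $3\epsilon_i$-almost-homomorphism, upgrades it to a genuine homomorphism $\bar\phi_i:H_i\to G$ via the Mazur--Rong--Wang rigidity result, shows $\bar\phi_i$ is a Lie group isomorphism (injectivity from the no-small-subgroups property of Lie groups, surjectivity from dense image and matching dimensions), and then invokes Grove--Karcher to conjugate the two pointwise-close embeddings $\iota_i,\,\bar\phi_i:H_i\hookrightarrow\mathrm{Isom}(M)$. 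Your proof instead works at the level of \emph{subgroups}: you pass to the Chabauty limit $H_\infty$, identify it with a conjugate of $G$ via the eGH-distance-zero unwinding, observe that the dimension of a Hausdorff limit of closed subgroups cannot drop (your ``lower semicontinuity'' label is unconventional but the Lie-algebra argument is correct), then apply Montgomery--Zippin to get subconjugacy $g_iH_ig_i^{-1}=K_i\subseteq H_\infty$, and finally promote subconjugacy to conjugacy by component-group bookkeeping. The step you flag as delicate is indeed where the real work lies, but it does close: after subconverging $g_i\to g_\infty$ one sees $g_\infty H_\infty g_\infty^{-1}\subseteq H_\infty$, whence equality since a compact Lie group has no proper closed subgroup isomorphic to itself, so $K_i\to H_\infty$ in Hausdorff distance; since each $K_i\supseteq H_\infty^0$ is a union of cosets of $H_\infty^0$ and distinct cosets lie at a uniform positive gap, $K_i=H_\infty$ for $i$ large. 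What each approach buys: the paper's route avoids ever separating the identity component from the component group (Grove--Karcher handles both at once), at the cost of importing the almost-homomorphism machinery; your route is arguably more classical and self-contained given Montgomery--Zippin, at the cost of the extra coset bookkeeping at the end.
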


One may compare Proposition \ref{stability} with the result below by Grove and Karcher \cite{GK}.

\begin{thm}\label{stable_GK}
	Let $M$ a compact Riemannian manifold. Then there exists $\epsilon(M)>0$ such that for any two isometric $G$-actions
	$$\mu_1,\mu_2: G\times M\to M$$
	with $d_M(\mu_1(g,x),\mu_2(g,x))\le\epsilon(M)$ for all $g\in G$ and $x\in M$, these two actions are conjugate by an isometry.
\end{thm}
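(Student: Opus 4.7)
The plan is to construct an isometry $\phi:M\to M$ satisfying $\phi\circ\mu_1(g,\cdot)=\mu_2(g,\cdot)\circ\phi$ for every $g\in G$ by averaging over the group via Karcher's center-of-mass construction. First I take $\epsilon$ smaller than the convexity radius $r_0$ of $M$, so that for any probability measure $\nu$ supported in an open ball of radius $r_0$ the Riemannian center of mass $\mathrm{CoM}(\nu)$ (the unique minimizer in that ball of $q\mapsto \int_M d(q,\cdot)^2\, d\nu$) is well defined, depends smoothly on $\nu$, and satisfies the equivariance $\mathrm{CoM}(T_*\nu)=T(\mathrm{CoM}(\nu))$ for every isometry $T$ of $M$.

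For each $x\in M$, define $f_x:G\to M$ by $f_x(g):=\mu_2(g^{-1},\mu_1(g,x))$. Since $\mu_2(g^{-1},\cdot)$ is an isometry and $d(\mu_1(g,x),\mu_2(g,x))\le\epsilon$ by hypothesis, I obtain $d(f_x(g),x)\le\epsilon$ for every $g\in G$, so the image of $f_x$ is contained in $B_\epsilon(x)$. Writing $dg$ for normalized Haar measure on $G$, set
$$\phi(x):=\mathrm{CoM}\bigl((f_x)_*dg\bigr),$$
which is well defined and satisfies $d(\phi(x),x)\le\epsilon$. The intertwining property is checked using the change of variable $g=g'h^{-1}$ (which preserves Haar measure) together with the equivariance of the center of mass:
$$\phi(\mu_1(h,x))=\mathrm{CoM}_g\bigl(\mu_2(g^{-1},\mu_1(gh,x))\bigr)=\mathrm{CoM}_{g'}\bigl(\mu_2(hg'^{-1},\mu_1(g',x))\bigr)=\mu_2(h,\phi(x)).$$
Hence $\phi$ conjugates $\mu_1$ into $\mu_2$ in the smooth category.

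The main obstacle is to upgrade the smooth map $\phi$, which is a priori only $C^0$-close to the identity, into an honest isometry of $(M,g_M)$. Shrinking $\epsilon$ further, standard first and second derivative estimates on the Karcher mean give $\|D\phi-\mathrm{Id}\|$ as small as desired, so $\phi$ is a diffeomorphism and the pullback $\phi^\ast g_M$ is a smooth Riemannian metric close to $g_M$. The intertwining relation shows that $\phi^\ast g_M$ is $\mu_1$-invariant, so the family $g_t:=(1-t)g_M+t\,\phi^\ast g_M$ consists of $\mu_1$-invariant metrics close to $g_M$. One then iterates the center-of-mass averaging, viewing the construction above as the first step of a contraction on the space of smooth $G$-equivariant maps between the two actions, and shows that the iteration converges to a fixed point which is not only an intertwiner but is in addition an isometry with respect to $g_M$; equivalently, one shows that the uniqueness of $G$-invariant metrics within a small $C^\infty$ neighborhood of $g_M$ forces the limit of the iteration to pull $g_M$ back to itself. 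Making the contraction and the $C^\infty$ estimates on Karcher averages quantitative is the delicate part of the argument, but the overall scheme is the classical Grove--Karcher averaging machinery and depends only on the compactness of $M$ and $G$.
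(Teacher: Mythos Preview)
The paper does not prove this theorem; it is quoted from Grove--Karcher [GK], with only the remark that they ``use the center of mass technique, and explicitly construct the conjugation map.'' Your first two paragraphs reproduce exactly that construction: the averaged map $\phi(x)=\mathrm{CoM}((f_x)_*dg)$ is the Grove--Karcher intertwiner, and for $\epsilon$ small it is a diffeomorphism $C^1$-close to the identity conjugating $\mu_1$ to $\mu_2$. That part is correct and matches the cited source.

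The gap is in your third paragraph, where you try to promote $\phi$ to an isometry. The mechanism you propose does not work. The assertion that $G$-invariant metrics are unique in a $C^\infty$ neighborhood of $g_M$ is simply false: the space of $\mu_1$-invariant Riemannian metrics is an open convex cone, typically infinite-dimensional, so $\phi^*g_M$ being $\mu_1$-invariant and close to $g_M$ tells you nothing. The ``iteration'' you allude to is never actually defined --- you have one averaging procedure that inputs two actions and outputs a diffeomorphism; it is not clear what the second step would average, why successive outputs should contract in any norm, or why a fixed point should have zero metric distortion. In fact Grove--Karcher only claim a conjugating \emph{diffeomorphism}; the word ``isometry'' in the paper's formulation is either an imprecision or shorthand for a different argument (and indeed the paper's proof of its own stability proposition bypasses this theorem entirely, obtaining conjugacy inside the Lie group $\mathrm{Isom}(M)$ via the cited almost-homomorphism results). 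Your center-of-mass $\phi$ already establishes what Grove--Karcher prove; the attempted strengthening is where your argument breaks down.
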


We mention that the stability of group actions can be traced back to Palais \cite{Pal}. He shows that any two $C^1$-close $G$-actions, as diffeomorphisms on $M$, can be conjugated by a diffeomorphism, where $G$ is a compact Lie group. Grove and Karcher use the center of mass technique, and explicitly construct the conjugation map. They also interpret the $C^1$-closeness in terms of curvature bounds of $M$, 
when one of actions is by isometries. For our purpose, we restrict our attention to isometric actions only here.

Proposition \ref{stability} is different from Theorem \ref{stable_GK} in the following aspects. Proposition \ref{stability} considers two isometric actions with possibly different groups. For instance, $G=S^1$ and we can take $H=\mathbb{Z}_p\subseteq G$ with large integer $p$. Even if one assume $G=H$, the closeness of these two actions in the equivariant Gromov-Hausdorff topology is weaker than the pointwise closeness condition in Theorem \ref{stable_GK}. For example, we know that there is a sequence of circle actions on the standard torus $T^2=S^1\times S^1$ converging to $T^2$-action:
$$(T^2,S_i^1)\overset{GH}\longrightarrow(T^2,T^2).$$
Thus for any $\epsilon>0$, we can find two different circle actions in the tail of this sequence such that
$$d_{GH}((T^2,S^{1}_{j}),(T^2,S^1_{k}))\le\epsilon,$$
where $j,k$ are sufficiently large.
However, these circle actions are not pointwise close. This example also illustrates that the $\epsilon$ in Proposition \ref{stability} has to depend on the $G$-action.

To prove Proposition \ref{stability}, we recall some facts on equivariant Gromov-Hausdorff convergence \cite{FY}. Given $(M,H_i)\overset{GH}\longrightarrow (M,G)$, one can always assume that the identity map on $M$ gives equivariant $\epsilon_i$-approximations for some $\epsilon_i\to 0$. We endow $\mathrm{Isom}(M)$ with a natural bi-invariant metric $d$ from its action on $M$:
$$d_G(g_1,g_2)=\max_{x\in M}d_M(g_1\cdot x,g_2\cdot x).$$
Then $H_i$ converges to the limit $G$ with respect to the Hausdorff distance induced by $(\mathrm{Isom}(M),d)$.

In our proof of Proposition \ref{stability}, we use the following results.

\begin{prop}\cite{GK}\label{com_1}
	Let $\mu_1,\mu_2:H\to G$ be two homomorphisms of compact Lie group $H$ into the Lie group $G$ with a bi-invariant Riemannian metric. There exists $\epsilon(G)>0$ such that if $d(\mu_1(h),\mu_2(h))\le\epsilon(G)$ for all $h\in H$, then the subgroups $\mu_1(H)$ and $\mu_2(H)$ are conjugate in $G$.
\end{prop}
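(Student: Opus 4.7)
The plan is to convert the statement into a fixed point problem on $G$ and then produce the fixed point via Karcher's center of mass, using the bi-invariance of the metric throughout.

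First, I would introduce the twisted $H$-action on $G$ given by
$$h\cdot g := \mu_2(h)\,g\,\mu_1(h)^{-1}.$$
Left and right multiplications on $G$ are isometries because the metric is bi-invariant, so each $h\cdot(-)=L_{\mu_2(h)}\circ R_{\mu_1(h)^{-1}}$ is an isometry of $G$; associativity is a one-line check. The key observation is that $g\in G$ is a fixed point of this action if and only if $\mu_2(h)=g\,\mu_1(h)\,g^{-1}$ for every $h\in H$, i.e., if and only if $g$ conjugates $\mu_1$ into $\mu_2$. The hypothesis makes the identity an \emph{almost} fixed point: using bi-invariance,
$$d(h\cdot e,\,e)=d\bigl(\mu_2(h)\mu_1(h)^{-1},\,e\bigr)=d(\mu_2(h),\mu_1(h))\le \epsilon,$$
so the whole orbit of $e$ sits in $\overline{B_\epsilon(e)}\subset G$.

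Next I would upgrade this almost fixed point to an actual one by averaging. Let $\mu_H$ denote normalized Haar measure on $H$, and consider the map $\Phi:H\to G$, $\Phi(h)=\mu_2(h)\mu_1(h)^{-1}$, whose image lies in $\overline{B_\epsilon(e)}$. Provided $\epsilon$ is smaller than a threshold $\epsilon(G)>0$ coming from Karcher's theory, the center of mass
$$g_0 := \mathrm{argmin}_{g\in \overline{B_\epsilon(e)}} \int_H d(g,\Phi(h))^2\,d\mu_H(h)$$
exists, is unique, and lies in $\overline{B_\epsilon(e)}$. For any $h_0\in H$ the isometry $h_0\cdot(-)$ sends $\Phi(h)$ to $\mu_2(h_0h)\mu_1(h_0h)^{-1}=\Phi(h_0h)$; by left-invariance of $\mu_H$ this transformation preserves the pushforward measure $\Phi_*\mu_H$. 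Since the center-of-mass functional depends only on $\Phi_*\mu_H$ and has a unique minimizer, we get $h_0\cdot g_0=g_0$ for every $h_0\in H$. Unwinding the definition gives $\mu_2(h)=g_0\,\mu_1(h)\,g_0^{-1}$ for all $h\in H$, whence $\mu_2(H)=g_0\,\mu_1(H)\,g_0^{-1}$.

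The main obstacle, and the only quantitative point, is the choice of $\epsilon(G)$. One needs $\epsilon$ small enough so that $\overline{B_\epsilon(e)}$ is strongly geodesically convex (with unique minimizing geodesics between points and unique closest-point projection), so that the $L^2$-energy above is strictly convex with a unique minimizer, and so that this minimizer cannot escape the convex ball. All three conditions are controlled by the convexity radius of $G$ at $e$, which depends only on an upper bound on the sectional curvature of the bi-invariant metric and on the injectivity radius of $G$; both are intrinsic invariants of $(G,\text{metric})$, so the resulting $\epsilon(G)$ depends on $G$ alone, as required. Karcher's standard estimates (e.g., the inequality guaranteeing uniqueness of the Riemannian mean in a convex ball of nonnegatively curved manifolds) furnish an explicit such threshold, completing the argument.
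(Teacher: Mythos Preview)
The paper does not prove this proposition; it is cited from Grove--Karcher \cite{GK} and used as a black box. Your argument is precisely the Grove--Karcher center-of-mass construction the paper alludes to (``Grove and Karcher use the center of mass technique, and explicitly construct the conjugation map''): the twisted action $h\cdot g=\mu_2(h)g\mu_1(h)^{-1}$, the observation that $e$ has orbit in $\overline{B_\epsilon(e)}$, and the Riemannian center of mass producing the genuine fixed point are exactly the steps in \cite{GK}, and your outline is correct.
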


\begin{prop}\cite{MRW}\label{com_2}
	Let $G$ be a Lie group with left-invariant Riemannian metric. Then there exists a constant $\epsilon(G)>0$ such that if $\phi:H\to G$ is a map from a Lie group $H$ to $G$ such that
	$$d(\phi(h_1h_2),\phi(h_1)\phi(h_2))\le\epsilon<\epsilon(G)$$
	for all $h_1,h_2\in H$, then there is a Lie group homomorphism $\bar{\phi}:H\to G$ with $d(\bar{\phi}(h),\phi(h))\le 2\epsilon$ for all $h\in H$.
\end{prop}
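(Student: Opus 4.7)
The plan is to construct $\bar\phi$ as the limit of a Newton-type iteration in which each step quadratically reduces the multiplicative defect of the previous approximation, so that the total displacement forms a geometric series summing to at most $2\epsilon$.

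First, normalize so that $\phi(e)=e$. Applying the hypothesis with $h_1=h_2=e$ gives $d(\phi(e),\phi(e)^2)\le\epsilon$, and left-invariance of the metric on $G$ converts this into $d(\phi(e),e)\le\epsilon$. Left-translating $\phi$ by $\phi(e)^{-1}$ (which preserves the defect, again by left-invariance) arranges $\phi(e)=e$ at a bookkeeping cost of at most $\epsilon$ in the final estimate $d(\bar\phi,\phi)\le 2\epsilon$; this is why the advertised constant is $2\epsilon$ rather than $\epsilon$.

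Next, introduce the defect $c(h_1,h_2)=\phi(h_1)\phi(h_2)\phi(h_1h_2)^{-1}$, taking values in the $\epsilon$-ball around $e\in G$. Passing to exponential coordinates near $e$, write $c=\exp_G(\xi)$ with $\|\xi\|_\infty=O(\epsilon)$. Associativity in $H$ combined with the Baker--Campbell--Hausdorff expansion in $G$ forces $\xi$ to satisfy an approximate 2-cocycle identity with error of order $\epsilon^2$. The core correction is to produce $\eta\colon H\to\mathfrak{g}$ with $\|\eta\|_\infty=O(\epsilon)$ such that the new map $\phi'(h)=\phi(h)\exp_G(\eta(h))$ has defect of order $\epsilon^2$; this amounts to solving a coboundary equation $\delta\eta\approx-\xi$ with controlled norms. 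Iterating yields maps $\phi_n$ with defects $O(\epsilon^{2^n})$ and corrections of matching size, whose telescoping displacement is bounded by $C\sum_n\epsilon^{2^n}<2\epsilon$ once $\epsilon<\epsilon(G)$ is below a threshold determined by the injectivity radius of $\exp_G$, the BCH convergence radius, and the absolute constants in the cocycle correction.

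The limit $\bar\phi$ has vanishing defect, so it is a genuine homomorphism within $2\epsilon$ of $\phi$ at every point; continuity of $\bar\phi$ (hence the Lie-group-homomorphism conclusion) follows from the uniform convergence together with the fact that each correction $\exp_G\circ\eta_n$ is constructed from the smooth group operations. The main obstacle is the correction step when $H$ is not compact: for compact $H$ one averages a naive candidate against Haar measure, as in the Grove--Karcher center-of-mass construction, but for a general Lie group $H$ one must build $\eta$ locally in exponential coordinates on $H$ and extend by the group law, tracking the BCH error terms carefully so that both the quadratic improvement in defect and the geometric series bound hold with the precise constant $2\epsilon$. Proposition \ref{com_1} above can be viewed as the terminal case of this circle of ideas, where $\phi$ is already known to be a homomorphism and only the conjugacy of images needs to be produced.
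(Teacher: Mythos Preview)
The paper does not prove this proposition; it is quoted verbatim from \cite{MRW} and used as a black box in the proof of Proposition~\ref{stability}. So there is no ``paper's own proof'' to compare against---only the question of whether your sketch stands on its own.

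Your outline (normalize $\phi(e)=e$, measure the defect $c(h_1,h_2)$ in exponential coordinates, solve an approximate coboundary equation $\delta\eta\approx -\xi$, iterate with quadratic convergence) is indeed the standard mechanism behind results of this type, and for \emph{compact} $H$ it can be made to work: the coboundary $\eta$ is produced by Haar-averaging, which simultaneously gives the needed $\|\eta\|_\infty=O(\epsilon)$ bound. This is exactly the regime the present paper needs, since in the application the groups $H_i$ are closed subgroups of $\mathrm{Isom}(M)$ with $M$ compact.

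However, your treatment of non-compact $H$ is not a proof but a description of the difficulty. The sentence ``one must build $\eta$ locally in exponential coordinates on $H$ and extend by the group law, tracking the BCH error terms carefully'' does not explain how to obtain a \emph{globally bounded} primitive $\eta$ from a bounded approximate $2$-cocycle $\xi$ when no averaging is available. A local solution near $e\in H$ does not automatically extend to a bounded solution on all of $H$; without a mechanism such as amenability or compactness, the cohomological correction step can fail in principle. If you intend the proposition in the generality stated, this is a genuine gap; if you are content with compact $H$ (which suffices for the paper), you should say so explicitly and drop the unresolved paragraph about the non-compact case.
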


We call such a map $\phi:H\to G$ with
$$d(\phi(h_1h_2),\phi(h_1)\phi(h_2))\le\epsilon$$
an $\epsilon$-\textit{homomorphism}. In practice, we may start with some bi-invariant distance function $d$ on $G$. We can equip $G$ with a bi-invariant Riemannian metric $d_0$ (we can do this because $G$ is compact). Then there is some constant $C\ge 1$ such that $C^{-1}d_0 \le d_G \le Cd_0$. With this observation, for a sequence of $\epsilon_i$-homomorphisms with respect to $d$, it is a sequence of $C\epsilon_i$-homomorphisms with respect to $d_0$. Therefore, we can still apply Proposition \ref{com_2} for $i$ large.

\begin{proof}[Proof of Proposition \ref{stability}]
	Let $\{H_i\}$ be a sequence of group actions on $M$ such that $(M,H_i)\overset{GH}\longrightarrow(M,G)$. We show that if $\dim(H_i)\ge \dim(G)$, then $(M,H_i)$ is conjugate to $(M,G)$ by an isometry for all $i$ large.
	
	As pointed out, for $(M,H_i)\overset{GH}\longrightarrow(M,G)$, it is equivalent to consider the Hausdorff convergence $H_i\overset{H}\to G$ in $(\mathrm{Isom}(M),d)$, where $d$ is given by
	$$d(g_1,g_2)=\max_{x\in M} d_{M}(g_1 \cdot x,g_2\cdot x).$$
	We know that there is $\epsilon_i\to 0$ such that $d_H(H_i,G)\le \epsilon_i$. For each $h\in H_i$, we choose $\phi_i(h)$ as an element in $G$ that is $\epsilon_i$-close to $h$. This defines a map
	$$\phi_i: H_i\to G.$$
	It is straight-forward to check that $\phi_i$ is a $3\epsilon_i$-homomorphism with respect the metric $d|_G$:
	\begin{align*}
		&\ d(\phi_i(h_1h_2),\phi_i(h_1)\phi_i(h_2))\\
		\le	&\ d(\phi_i(h_1h_2),h_1h_2)+d(h_1h_2,h_1\phi_i(h_2))+d(h_1\phi_i(h_2),\phi_i(h_1)\phi_i(h_2))\\
		\le&\ 3\epsilon_i
	\end{align*}
    for any $h_1,h_2\in H_i$.
	Apply Proposition \ref{com_2}, we obtain a sequence of Lie group homomorphisms:
	$$\bar{\phi_i}: H_i\to G.$$
	
	\textbf{Claim:} $\bar{\phi}_i$ is a Lie group isomorphism for all $i$ large. We first show that $\bar{\phi}_i$ is injective. Suppose that $\ker(\bar{\phi}_i)\not=\{e\}$, then we have a sequence of non-trivial subgroups converging to $\{e\}$:
	$$\ker(\bar{\phi}_i)\overset{H}\to\{e\}.$$
	However, there exists $\delta>0$ such that any non-trivial subgroup of $\mathrm{Isom}(M)$ has displacement at least $\delta$ on $M$. This is because $\mathrm{Isom}(M)$ is a Lie group, which can not have arbitrarily small non-trivial subgroups. Thus $\ker(\bar{\phi}_i)=\{e\}$ for all $i$ large. Recall the assumption that $\dim(H_i)\ge \dim(G)$. Since $\bar{\phi}_i$ is injective, we must have $\dim(H_i)=\dim(G)$. Also note that the image $\bar{\phi}_i(H_i)$ is $C\epsilon_i$-dense in $G$, thus $\bar{\phi_i}$ must be surjective for $i$ large.
	
	Now $H_i$ has two embeddings into $\mathrm{Isom}(M)$:
	$$\iota_i:H_i\to \mathrm{Isom}(M),\quad \bar{\phi}_i: H_i\to G\subseteq \mathrm{Isom}(M),$$
	where $\iota_i$ is the inclusion map. Note that $$d(h,\bar{\phi}_i(h))\le C\epsilon_i\to 0$$
	for all $h\in H_i$ and some constant $C$. By Proposition \ref{com_1}, we conclude that for $i$ large $G=\bar{\phi}_i(H_i)$ is conjugate to $H_i$ as subgroups in $\mathrm{Isom}(M)$. In other words, there is some isometry $g_i\in \mathrm{Isom}(M)$ such that $g_i^{-1}Gg_i=H_i$. This shows that $(M,H_i)$ and $(M,G)$ are conjugate through isometry $g_i$ for $i$ large.
\end{proof}

\begin{rem}\label{rem_stable}
	The proof of Proposition \ref{stability} can be immediately extended to any compact metric space $(X,d)$ whose isometry group $\mathrm{Isom}(X)$ is a Lie group. In particular, for a metric cone $\mathbb{R}^k\times C(Z)\in \mathcal{M}(n,0)$, where $\mathrm{diam}(Z)<\pi$, since $\mathrm{Isom}(Z)$ a Lie group \cite{CC3,CN1}, we can extend Proposition \ref{stability} to such a space $Z$.  
\end{rem}

\section{Proof of Theorem B}

Without mentioning, we always assume that groups in this section are abelian. We prove Theorem B in two steps. First we show that for all $$(\widetilde{Y},\tilde{y},G)=(\mathbb{R}^k\times C(Z),(0,z),G)\in\Omega(\widetilde{M},\Gamma),$$ the isotropy subgroup of $p(G)$ at $0$ is independent of $(\widetilde{Y},\tilde{y},G)$, and $(\mathbb{R}^k,0,p(G))$ satisfies property \textit{(P)} (see Definition \ref{linear} below). Secondly, we prove the non-compact factor in $p(G)$ is also independent of $(\widetilde{Y},\tilde{y},G)$. The proof of each step shares the same structure as Proposition \ref{not_type_0}: we show that there exists a gap between two certain classes of group actions, then choose a critical rescaling to derive a desired contradiction in the corresponding limit space.

Recall that once we specify a point in $\mathbb{R}^k$ as the origin $0$, then every element in $\mathrm{Isom}(\mathbb{R}^k)=\mathbb{R}^k\rtimes O(k)$ can be written as $(A,x)$, where $A\in O(k)$ fixing $0$ and $x\in\mathbb{R}^k$. For convenience, we introduce a definition.

\begin{defn}\label{def_linear}
	Let $(\mathbb{R}^k,0,G)$ be the $k$ dimensional Euclidean space with an isometric abelian $G$-action. We say that $(\mathbb{R}^k,0,G)$ satisfies \textit{property (P)}, if\\ \textit{(P)} for any element $(A,v)\in G$, $(A,0)$ is also an element of $G$.
\end{defn}

Property \textit{(P)} has the following consequence.

\begin{lem}\label{linear}
	If $(\mathbb{R}^k,0,G)$ satisfies \textit{property (P)}, then\\
(1) any compact subgroup of $G$ fixes $0$;\\
(2) $G$ admits decomposition $G=\mathrm{Iso}_0 G\times \mathbb{R}^{l}\times \mathbb{Z}^{m}$, and any element in the subgroup $\{e\}\times \mathbb{R}^{l}\times \mathbb{Z}^{m}$ is a translation, where $\mathrm{Iso}_0 G$ is the isotropy subgroup of $G$ at $0$.
\end{lem}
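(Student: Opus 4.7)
My plan is to exploit property \textit{(P)} to reduce the lemma to an almost algebraic computation. Write every isometry of $\mathbb{R}^k$ as $(A,v)$ with $A \in O(k)$ and $v \in \mathbb{R}^k$, acting as $x \mapsto Ax + v$, so that $(A_1, v_1)(A_2, v_2) = (A_1 A_2,\, A_1 v_2 + v_1)$. Let $\pi\colon G \to O(k)$ be the projection onto the rotation part and $T = \ker \pi \subseteq G$ the translation subgroup. Property \textit{(P)} says precisely that the formal section $s\colon \pi(G) \to G$, $A \mapsto (A,0)$, is well-defined, and its image is the isotropy subgroup $\mathrm{Iso}_0 G$. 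The crucial consequence of \textit{(P)} combined with the abelianness of $G$ is the identity
\begin{equation*}
Av = v \quad \text{for every } (A,v) \in G.
\end{equation*}
Indeed, by \textit{(P)} we have $(A,0) \in G$, so $(I,v) = (A,v)(A,0)^{-1} \in G$, and then the commutativity of $(A,0)$ and $(I,v)$ in $G$ forces $(A,Av) = (A,0)(I,v) = (I,v)(A,0) = (A,v)$.

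For assertion (1), I take any compact subgroup $K \subseteq G$ and any $(A,v) \in K$. Using $Av=v$, one has inductively $(A,v)^n = (A^n,\, nv)$, and hence $(A,v)^n \cdot 0 = nv$. Compactness of $K$ makes the orbit $K \cdot 0$ bounded in $\mathbb{R}^k$, which forces $v = 0$. Therefore $K \subseteq \mathrm{Iso}_0 G$.

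For assertion (2), the section $s$ together with abelianness makes the multiplication map
\begin{equation*}
T \times \mathrm{Iso}_0 G \;\longrightarrow\; G, \qquad \bigl((I,w),(A,0)\bigr) \longmapsto (A,w),
\end{equation*}
a group isomorphism: surjectivity is property \textit{(P)}, injectivity is $T \cap \mathrm{Iso}_0 G = \{e\}$, and it is a group homomorphism because $(I,w)$ and $(A,0)$ commute in $G$ (by the identity above). Since $G$ is closed in $\mathrm{Isom}(\mathbb{R}^k)$, $T$ is a closed abelian subgroup of the translation group $\mathbb{R}^k$, hence $T \cong \mathbb{R}^l \times \mathbb{Z}^m$ by the classification of closed subgroups of $\mathbb{R}^k$, and every element of $T$ is a translation by construction. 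Combining these factors yields $G = \mathrm{Iso}_0 G \times \mathbb{R}^l \times \mathbb{Z}^m$. The only conceptually nontrivial step is extracting $Av=v$ from property \textit{(P)} and commutativity; once that is in hand, both (1) and (2) are essentially formal, so I anticipate no substantive obstacle.
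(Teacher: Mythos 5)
Your argument is correct and follows essentially the same route as the paper's: both extract the key identity $Av=v$ for all $(A,v)\in G$ from property \textit{(P)} and commutativity, and then deduce (1) from the unboundedness of $\{(A^n,nv)\}$ when $v\neq 0$. For (2) the paper merely cites (1) and ``the structure of abelian Lie groups,'' whereas your explicit internal direct product $G = T\times \mathrm{Iso}_0 G$ with $T=\ker\pi$ supplies the detail needed to see that the noncompact factor can in fact be taken to consist of pure translations --- a point the paper's terse reference to the structure theorem leaves implicit.
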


\begin{proof}
	(1) Let $K$ be any compact subgroup of $G$. Suppose that $K$ does not fix $0$. Then there is $g=(A,v)\in K$ such that
	$$0\not=g\cdot 0=(A,v)\cdot 0=x.$$
	By assumption, $(A,0)\in G$. Hence $(A,x)\cdot(A^{-1},0)=(I,v)$ is also an element of $G$. Because $G$ is abelian, $(A,0)$ and $(I,v)$ commutes. This implies that $A\cdot v=v$, and thus $(A,v)^k=(A^k,kv)$ for any integer $k$. We see that the subgroup generated by $(A,v)$ can not be contained in any compact group, a contradiction.\\
	(2) This follows from (1) and the structure of abelian Lie groups.
\end{proof}

It is clear that (2) in Lemma \ref{linear} is equivalent to property \textit{(P)}.

\begin{rem}\label{rem_linear_1}
	Let ${M}$ be an open $n$-manifold of $\mathrm{Ric}\ge 0$. Suppose that $$(\widetilde{Y},\tilde{y},G)=(\mathbb{R}^k\times C(Z),(0,z),G)\in \Omega(\widetilde{M},\Gamma)$$
	is a metric cone with isometric $G$-action, where $C(Z)$ has vertex $z$ and $\mathrm{diam}(Z)<\pi$. We do not know any example so that $(\mathbb{R}^k,0,p(G))$ does not satisfy property \textit{(P)}. However, we can always find ones with property \textit{(P)} in $\Omega(\widetilde{M},\Gamma)$ by passing to the equivariant tangent cone of $(\widetilde{Y},\tilde{y},G)$ at $\tilde{y}$, or at infinity ($j\to\infty$):
	$$(j\widetilde{Y},\tilde{y},G)\overset{GH}\longrightarrow(\widetilde{Y},\tilde{y},G_{\tilde{y}}),$$
	$$(j^{-1}\widetilde{Y},\tilde{y},G)\overset{GH}\longrightarrow(\widetilde{Y},\tilde{y},G_\infty).$$
	It is clear that both $(\mathbb{R}^k,0,p(G_{\tilde{y}}))$ and $(\mathbb{R}^k,0,p(G_\infty))$ satisfy property \textit{(P)}, because they satisfy (2) in Lemma \ref{linear} (with no $\mathbb{Z}$-factors).
\end{rem}

\begin{rem}\label{rem_linear_2}
If $(\mathbb{R}^k,0,G)$ does not satisfy property \textit{(P)}, then there is an element $(A,v)\in G$, but $(A,0)\not\in G$. After blowing down
$$(j^{-1}\mathbb{R}^k,0,G,(A,v))\overset{GH}\longrightarrow(\mathbb{R}^k,0,G_\infty,(A,0)).$$
Thus $(A,0)\in G_\infty$. Note that $\mathrm{Iso}_0p(G)$ is preserved as a subgroup of $\mathrm{Iso}_0p(G_\infty)$. Hence $\mathrm{Iso}_0p(G)$ is a proper subgroup of $\mathrm{Iso}_0p(G_\infty)$.
\end{rem}

We restate Theorem B in terms of Definition \ref{def_linear}.

\begin{thm}\label{omega}
	Let $M$ be an open $n$-manifold with abelian fundamental group and $\mathrm{Ric}\ge 0$, whose universal cover $\widetilde{M}$ is $k$-Euclidean at infinity. Then there exist a closed abelian subgroup $K$ of $O(k)$ and an integer $l\in[0,k]$ such that for any space $(\widetilde{Y},\tilde{y},G)\in \Omega(\widetilde{M},\Gamma)$, $(\mathbb{R}^k,0,p(G))$ satisfies property \textit{(P)} and $p(G)=K\times \mathbb{R}^l$.
\end{thm}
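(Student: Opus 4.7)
The plan is to follow the two-step outline given just before the theorem, carrying out each step along the lines of Proposition \ref{not_type_0}: first establish a gap lemma between a ``good'' and a ``bad'' class of equivariant tangent cones in $\Omega(\widetilde{M},\Gamma)$, then rule out the bad class by critical rescaling. Throughout I write every projection as $p(G)\subseteq \mathrm{Isom}(\mathbb{R}^k)=\mathbb{R}^k\rtimes O(k)$, and use Remark \ref{rem_linear_1} to fix a base $(\widetilde{Y}_0,\tilde{y}_0,G_0)\in\Omega(\widetilde{M},\Gamma)$ with property \textit{(P)} and no $\mathbb{Z}$-translation factor, so that $p(G_0)=K\times\mathbb{R}^l$ for some closed abelian $K\leq O(k)$ and some $l\in[0,k]$.

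Step one is to show that every $(\widetilde{Y},\tilde{y},G)\in\Omega(\widetilde{M},\Gamma)$ satisfies property \textit{(P)} with $\mathrm{Iso}_0\,p(G)$ conjugate to $K$ in $O(k)$. The required gap lemma states: there is $\epsilon(M)>0$ such that if $(\widetilde{Y}_1,\tilde{y}_1,G_1)$ has property \textit{(P)} with isotropy conjugate to $K$, while $(\widetilde{Y}_2,\tilde{y}_2,G_2)$ either fails property \textit{(P)} via a witness $g\in G_2$ of displacement $\le 1$ whose rotational part does not already lie in the isotropy of $G_2$ at $\tilde{y}_2$, or has $\mathrm{Iso}_0\,p(G_2)$ non-conjugate to $K$, then they are at equivariant GH distance $\ge\epsilon(M)$. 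The argument mirrors Lemma \ref{gap_type_0}: by Lemma \ref{gap_higher_sym} a sufficiently good $\epsilon$-approximation must respect the $\mathbb{R}^k$-factor, so failure of the gap yields two sequences of projected actions with a common limit on $\mathbb{R}^k$; restricting the isotropies at $0$ to $S^{k-1}$ and applying Proposition \ref{stability} forces them eventually conjugate. If property \textit{(P)} had failed on the second side, the blow-down in Remark \ref{rem_linear_2} would produce a strictly larger limit isotropy, again contradicting Proposition \ref{stability}. The critical rescaling proceeds exactly as in Proposition \ref{not_type_0}: choose scales $r_i^{-1}$ and $s_i^{-1}$ realizing the good and bad classes, define $L_i$ as the set of intermediate scales at which the bad class appears with the chosen displacement control, let $l_i\in[\inf L_i,\inf L_i+1/i]$, show $l_i\to\infty$, and derive a contradiction at the critical limit by rescaling by a bounded factor to drive the witness displacement into $[1/2,1]$.

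Step two uses Lemma \ref{linear} to decompose $p(G)=K\times\mathbb{R}^{l(G)}\times\mathbb{Z}^{m(G)}$ and aims to show that the total non-compact rank $l(G)+m(G)$ equals $l$; combined with the existence of blow-down tangent cones with $m=0$ (Remark \ref{rem_linear_1}), this forces $m(G)=0$ and $l(G)=l$ for every $(\widetilde{Y},\tilde{y},G)$, yielding $p(G)=K\times\mathbb{R}^l$. The corresponding gap lemma separates a cone of total non-compact rank $r$ from one admitting an extra translation-like generator of displacement $\le 1$ that is linearly independent, modulo $K$, of the rank-$r$ translation subspace of the first cone; Lemma \ref{gap_higher_sym} again reduces the argument to the $\mathbb{R}^k$-level, where the Hausdorff limit of a rank-$r$ translation subgroup cannot acquire an extra linearly independent short generator. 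The critical rescaling then runs verbatim. The main obstacle in both steps is the bookkeeping of the bad class entering $L_i$: as flagged in Remark \ref{rem_scales_2}, one must include every witness satisfying the uniform displacement bound---not only those of a fixed algebraic type---so that the critical limit cannot evade the gap, and one must verify that equivariant GH-closeness of the projections $(\mathbb{R}^k,0,p(G_j))$ near $0$ does transfer to equivariant GH-closeness of their isotropies on $S^{k-1}$, which is the ingredient on which the applications of Proposition \ref{stability} rest.
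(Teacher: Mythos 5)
Your overall strategy---two gap lemmas, each exploited by a critical rescaling along the lines of Proposition~\ref{not_type_0}---matches the paper's. But the proposal misses the ingredient that makes the critical rescaling close in this more delicate setting, and without it the argument has a genuine hole.

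The issue is with the critical limit $(\widetilde{Y}',\tilde{y}',G')$ coming from the critical scales $l_i$. In Proposition~\ref{not_type_0} the bad class (``$p(H)\neq\{e\}$ with a short witness'') is downward-closed under further rescaling, so one can finish by rescaling the witness displacement into $[1/2,1]$. Here that trick alone does not suffice. Your gap lemma can only be stated with a dimension hypothesis---Proposition~\ref{stability} gives ``conjugate \emph{or} $\dim(H)<\dim(G)$,'' so arbitrarily close non-conjugate actions on $S^{k-1}$ do exist (e.g.\ $\mathbb Z_p\subset S^1$). Consequently you need $\dim(\mathrm{Iso}_0p(H))\ge\dim K$ in the bad class. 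But then the critical limit $G'$ could have $\mathrm{Iso}_0p(G')$ of \emph{strictly smaller} dimension (or, with equal dimension, fewer components), in which case $G'$ is neither in the good class nor in the bad class as you have defined it, and no contradiction falls out: you cannot apply the gap lemma between $G'$ and the nearby bad space, and you cannot use a bounded rescaling to put $G'$ back in $L_i$. The paper resolves this by an induction on the pair $D(K)=(\dim K,\#K/K_0)$, ordered lexicographically: Lemma~\ref{omega_isotropy} is proved for $D(K_1)=(m_1,m_2)$ assuming it for all smaller $D$, and Claim~2 of its proof uses precisely this induction hypothesis (via blow-up at $\tilde{y}'$ to land in a property-\textit{(P)} space with isotropy $K'$) to rule out $D(K')<(m_1,m_2)$. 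This induction is essential, not cosmetic, and nothing in your proposal plays its role. The same remark applies to your second step: the paper runs a parallel induction on the non-compact rank $l$ (Claim~2 of the proof of Theorem~\ref{omega} invokes the induction hypothesis to get $l'\ge l$), which your sketch does not mention.

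A secondary structural difference: you fold ``fails property~\textit{(P)}'' and ``has wrong isotropy at $0$'' into a single gap lemma and bad class. The paper deliberately keeps these apart: Lemma~\ref{gap_isotropy} and Lemma~\ref{omega_isotropy} deal \emph{only} with property-\textit{(P)} spaces, where the isotropy at $0$ is genuinely the compact factor of $p(G)$ and thus passes cleanly to the GH limit on $S^{k-1}$; property~\textit{(P)} for \emph{all} cones is then deduced afterward in Lemma~\ref{omega_linear} by comparing the blow-up at $\tilde{y}$ with the blow-down at infinity (Remarks~\ref{rem_linear_1} and~\ref{rem_linear_2}) and appealing back to the uniqueness of isotropy. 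Your one-shot gap lemma would need the Hausdorff limit of isotropies to behave well even when property~\textit{(P)} fails, which is exactly the caveat you flag at the end without resolving; the paper's two-stage layout avoids having to confront it inside the gap lemma at all.
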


For convenience, we introduce a definition.

\begin{defn}
	Let $(Y_j,y_j)$ be a metric space with isometric Lie group $G_j$-action ($j=1,2$). We say that $(Y_1,y_1,G_1)$ is equivalent to $(Y_2,y_2,G_2)$, if
	$$d_{GH}((Y_1,y_1,G_1),(Y_2,y_2,G_2))=0;$$
	or equivalently, there is an isometry $F:Y_1\to Y_2$ with $F(y_1)=y_2$, and a Lie group isomorphism $\psi:G_1\to G_2$ such that $F(g_1\cdot x_1)=\psi(g_1)\cdot F(x_1)$ for any $g_1\in G$ and $x_1\in Y_1$.
\end{defn}

We first establish a gap phenomenon between two classes of actions with property \textit{(P)} but different projected isotropy groups.

\begin{lem}\label{gap_isotropy}
	Let $M$ be an open $n$-manifold of $\mathrm{Ric}\ge 0$, whose universal cover is $k$-Euclidean at infinity. Let $K$ be an isometric action on $\mathbb{R}^k$ fixing $0$. Then there exists $\epsilon>0$, depending on $M$ and $K$-action, such that the following holds.	
	
	For any two spaces $(\widetilde{Y}_j,\tilde{y}_j,G_j)\in \Omega(\widetilde{M},\Gamma)$ $(j=1,2)$ satisfying\\
	(1) $(\mathbb{R}^k,0,p(G_j))$ satisfies property \textit{(P)} ($j=1,2$),\\
	(2) $(\mathbb{R}^k,0,\mathrm{Iso}_0 p(G_1))$ is equivalent to $(\mathbb{R}^k,0,K)$,\\
	(3) $\dim(\mathrm{Iso}_0 p(G_2))\ge\dim (K)$ and $(\mathbb{R}^k,0,\mathrm{Iso}_0 p(G_2))$ is not equivalent to $(\mathbb{R}^k,0,K)$,\\
	then
	$$d_{GH}((\widetilde{Y}_1,\tilde{y}_1,G_1),(\widetilde{Y}_2,\tilde{y}_2,G_2))\ge\epsilon.$$
\end{lem}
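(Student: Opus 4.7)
The plan is to argue by contradiction, adapting the strategy of Lemma \ref{gap_type_0} but now tracking the projected isotropy subgroups and invoking the stability result Proposition \ref{stability} on $S^{k-1}$.

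Suppose the conclusion fails. Then for each $i$ there exist $(\widetilde{Y}_{i,j}, \tilde{y}_{i,j}, G_{i,j}) \in \Omega(\widetilde{M}, \Gamma)$ ($j=1,2$) satisfying (1), (2), (3) with $d_{GH}$ between the two spaces tending to $0$. By pre-compactness of $\Omega(\widetilde{M}, \Gamma)$ and a diagonal argument, I pass to subsequences so that both sequences converge to a common limit $(\widetilde{Y}_\infty, \tilde{y}_\infty, G_\infty) \in \Omega(\widetilde{M}, \Gamma)$. Remark \ref{rem_gap_diam} gives $\widetilde{Y}_\infty = \mathbb{R}^k \times C(Z_\infty)$ with $\mathrm{diam}(Z_\infty) < \pi$, so the argument used in the proof of Lemma \ref{gap_type_0} yields convergence of the projected actions:
$$(\mathbb{R}^k, 0, p(G_{i,j})) \overset{GH}\longrightarrow (\mathbb{R}^k, 0, p(G_\infty)), \quad j = 1, 2.$$

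Next I would analyze the compact isotropy subgroups $K_{i,j} := \mathrm{Iso}_0\, p(G_{i,j})$. By property \textit{(P)} and Lemma \ref{linear}, each $K_{i,j}$ is the maximal compact subgroup of $p(G_{i,j})$, hence a closed subgroup of $O(k)$. After extracting a further subsequence one may assume $K_{i,j} \to L^{(j)}$ in Hausdorff distance in $O(k)$. The crucial claim is that $L^{(1)} = L^{(2)} = L := \mathrm{Iso}_0\, p(G_\infty)$. The containment $L^{(j)} \subseteq L$ is immediate, since any limit of isometries fixing $0$ fixes $0$. For the reverse, given $(A, 0) \in L$, one finds $(A_i, v_i) \in p(G_{i,j})$ with $(A_i, v_i) \to (A, 0)$; property \textit{(P)} applied to $p(G_{i,j})$ implies that the rotational part $(A_i, 0)$ lies in $K_{i,j}$, and $(A_i, 0) \to (A, 0)$, so $(A, 0) \in L^{(j)}$.

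With this identification, the conclusion follows from Proposition \ref{stability}. Since each $K_{i,1}$ is conjugate to $K$ in $O(k)$ by assumption (2) and $O(k)$ is compact, the Hausdorff limit $L$ is itself conjugate to $K$. On the other hand, $K_{i,2} \to L$ in $O(k)$ induces equivariant Gromov-Hausdorff convergence $(S^{k-1}, K_{i,2}) \to (S^{k-1}, L)$. Applying Proposition \ref{stability} to the compact Riemannian manifold $(S^{k-1}, L)$, for $i$ sufficiently large either $K_{i,2}$ is conjugate to $L$ in $O(k)$, so $(\mathbb{R}^k, 0, K_{i,2})$ is equivalent to $(\mathbb{R}^k, 0, K)$, or $\dim K_{i,2} < \dim L = \dim K$; both cases contradict (3). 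The main obstacle is the identification $L^{(j)} = \mathrm{Iso}_0\, p(G_\infty)$; property \textit{(P)} is essential here, since it lets one split off the purely rotational part of any element of $p(G_{i,j})$ approximating a limit isotropy. Without it the isotropy could jump in the limit in a way that makes the stability argument on $S^{k-1}$ break down.
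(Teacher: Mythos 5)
Your proposal is correct and follows essentially the same route as the paper: contradiction, convergence of both sequences to a common limit, reduction to the projected $\mathbb{R}^k$-actions via Lemma \ref{gap_higher_sym}, restriction to $S^{k-1}$, and Proposition \ref{stability}. The only difference is that you spell out explicitly (via the Hausdorff limit $L^{(j)}$ and property \textit{(P)}) why $\mathrm{Iso}_0\,p(G_{i,j})$ converges to $\mathrm{Iso}_0\,p(G_\infty)$, a step the paper states more tersely.
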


\begin{proof}
	Suppose that there are two sequences in $\Omega(M)$: $\{(\widetilde{Y}_{ij},\tilde{y}_{ij},G_{ij})\}_i$ $(j=1,2)$ such that for all $i$,\\
	(1) $(\mathbb{R}^k,0,p(G_{ij}))$ satisfies property \textit{(P)} ($j=1,2$);\\
	(2) $(\mathbb{R}^k,0,\mathrm{Iso}_0p(G_{i1}))$ is equivalent to $(\mathbb{R}^k,0,K)$;\\
	(3) $\dim(K_i)\ge\dim (K)$ and $(\mathbb{R}^k,0,K_i)$ is not equivalent to $(\mathbb{R}^k,0,K)$, where $K_i=\mathrm{Iso}_0 p(G_{i2})$;\\
	(4) $d_{GH}((\widetilde{Y}_{i1},\tilde{y}_{i1},G_{i1}),(\widetilde{Y}_{i2},\tilde{y}_{i2},G_{i2}))\to 0$ as $i\to\infty$.\\
	After passing to some subsequences, this gives convergence
	$$(\widetilde{Y}_{i1},\tilde{y}_{i1},G_{i1})\overset{GH}\longrightarrow(\widetilde{Y}_\infty,\tilde{y}_\infty,G_\infty),$$
	$$(\widetilde{Y}_{i2},\tilde{y}_{i2},G_{i2})\overset{GH}\longrightarrow(\widetilde{Y}_\infty,\tilde{y}_\infty,G_\infty),$$
	with $(\widetilde{Y}_\infty,\tilde{y}_\infty)=(\mathbb{R}^k\times C(Z_\infty),(0,z_\infty))$, where $\mathrm{diam}(Z_\infty)<\pi$ and $z_\infty$ is the vertex of $C(Z_\infty)$ (Lemma \ref{gap_higher_sym}). Consequently,
	$$(\mathbb{R}^k,0,p(G_{i1}))\overset{GH}\longrightarrow(\mathbb{R}^k,0,p(G_\infty)),$$
	$$(\mathbb{R}^k,0,p(G_{i2}))\overset{GH}\longrightarrow(\mathbb{R}^k,0,p(G_\infty)).$$
	Because each $(\mathbb{R}^k,0,p(G_{ij}))$ satisfies property \textit{(P)} for all $i$ and $j$, we conclude that
	$$(S^{k-1},K)\overset{GH}\longrightarrow(S^{k-1},K_\infty),$$
	$$(S^{k-1},K_i)\overset{GH}\longrightarrow(S^{k-1},K_\infty),$$
	where $S^{k-1}$ is the unit sphere in $\mathbb{R}^{k}$ and $K_\infty=\mathrm{Iso}_0p(G_{\infty})$. It is obvious that $(S^{k-1},0,K_\infty)$ is equivalent to  $(S^{k-1},0,K)$. By Proposition \ref{stability}, for all $i$ sufficiently large, either $\dim(K_i)<\dim(K)$ or $(S^{k-1},0,K_i)$ is equivalent to $(S^{k-1},0,K)$. This contradicts the hypothesis (3) on $K_i$.
\end{proof}

\begin{lem}\label{omega_linear}
	Let $M$ be an open $n$-manifold of $\mathrm{Ric}\ge 0$ and abelian fundamental group. Suppose that $\widetilde{M}$ is $k$-Euclidean at infinity. Then for any space $(\widetilde{Y},\tilde{y},G)\in\Omega(\widetilde{M},\Gamma)$, $p(G)$-action on $(\mathbb{R}^k,0,G)$ satisfies property \textit{(P)}. Moreover, $\mathrm{Iso}_0p(G)$ is independent of $(\widetilde{Y},\tilde{y},G)$.
\end{lem}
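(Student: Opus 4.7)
The plan is to adapt the critical rescaling argument of Proposition \ref{not_type_0}, replacing Lemma \ref{gap_type_0} with Lemma \ref{gap_isotropy} as the gap input.

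First I would fix a reference isotropy. Since $\dim\mathrm{Iso}_0p(G)$ over $(\widetilde{Y},\tilde{y},G)\in\Omega(\widetilde{M},\Gamma)$ takes integer values bounded by $\dim O(k)$, its supremum is attained; let $(\widetilde{Y}_1,\tilde{y}_1,G_1)$ realize it and set $K:=\mathrm{Iso}_0p(G_1)$. By Remarks \ref{rem_linear_1}--\ref{rem_linear_2}, $G_1$ may be assumed to satisfy property P (blowing down at $\tilde{y}_1$ would otherwise strictly enlarge $K$, contradicting maximality). A useful byproduct of this choice: once I show $\mathrm{Iso}_0p(G)$ is equivalent to $K$ for every $G\in\Omega$, property P will follow automatically, since otherwise Remark \ref{rem_linear_2} would produce a blow-down $G_\infty$ with $\mathrm{Iso}_0p(G_\infty)\supsetneq\mathrm{Iso}_0p(G)$, but conjugate compact Lie subgroups of $O(k)$ cannot be properly contained one in the other (an injective endomorphism of a compact Lie group is automatically bijective). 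Hence the lemma reduces to proving $\mathrm{Iso}_0p(G)$ is equivalent to $K$ for every $G\in\Omega$.

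Suppose for contradiction that $(\widetilde{Y}_2,\tilde{y}_2,G_2)\in\Omega$ has $\mathrm{Iso}_0p(G_2)$ not equivalent to $K$. A preliminary reduction allows me to assume $G_2$ satisfies property P: the tangent cone of the $G_2$-action at the cone vertex $\tilde{y}_2=(0,z_2)$, obtained as the blow-up limit $(j\widetilde{Y}_2,\tilde{y}_2,G_2)\overset{GH}\longrightarrow(\widetilde{Y}_2,\tilde{y}_2,G_{2,\tilde{y}_2})$, still lies in $\Omega(\widetilde{M},\Gamma)$ by a standard diagonal argument; since non-stabilizer elements of $G_2$ at $\tilde{y}_2$ escape under the rescaling while the stabilizer of $\tilde{y}_2$ in $G_2$ projects under $p$ exactly onto $\mathrm{Iso}_0p(G_2)$, this blow-up has $\mathrm{Iso}_0p=\mathrm{Iso}_0p(G_2)$ and satisfies property P trivially (all its elements fix $0\in\mathbb{R}^k$). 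Replacing $G_2$ by $G_{2,\tilde{y}_2}$, I invoke Lemma \ref{gap_isotropy} with reference $K':=K$ when $\dim\mathrm{Iso}_0p(G_2)\ge\dim K$ and $K':=\mathrm{Iso}_0p(G_2)$ otherwise, so that $G_1$ and $G_2$ realize the Lemma's Type A and Type B roles in one direction or the other, obtaining a gap $\epsilon=\epsilon(M,K')>0$.

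The critical rescaling now proceeds as in Proposition \ref{not_type_0}. Choose $r_i,s_i\to\infty$ with $(r_i^{-1}\widetilde{M},\tilde{x},\Gamma)\to(\widetilde{Y}_1,\tilde{y}_1,G_1)$ and $(s_i^{-1}\widetilde{M},\tilde{x},\Gamma)\to(\widetilde{Y}_2,\tilde{y}_2,G_2)$ with $t_i:=r_i/s_i\to\infty$, and set $(N_i,q_i,\Gamma_i):=(r_i^{-1}\widetilde{M},\tilde{x},\Gamma)$, so that $N_i\to(\widetilde{Y}_1,\tilde{y}_1,G_1)$ and $t_iN_i\to(\widetilde{Y}_2,\tilde{y}_2,G_2)$. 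Define
\begin{align*}
L_i:=\bigl\{1\le l\le t_i\,:\,&d_{GH}((lN_i,q_i,\Gamma_i),(W,w,H))\le\epsilon/3\text{ for some}\\
&(W,w,H)\in\Omega\text{ in the same $K'$-class as }(\widetilde{Y}_2,\tilde{y}_2,G_2)\bigr\},
\end{align*}
pick $l_i\in L_i$ with $l_i\le\inf L_i+1/i$, and extract a subsequential limit $(l_iN_i,q_i,\Gamma_i)\to(\widetilde{Y}',\tilde{y}',G')\in\Omega$. The three-step contradiction then runs in analogy with Proposition \ref{not_type_0}: (i) $l_i\to\infty$, since any finite limit $C$ would yield $(C\widetilde{Y}_1,\tilde{y}_1,G_1)$, which sits in the opposite $K'$-class and is $\epsilon$-far from every $L_i$-witness by Lemma \ref{gap_isotropy}; (ii) the same gap applied to $(\widetilde{Y}',\tilde{y}',G')$ forces it into the same $K'$-class as $G_2$; (iii) rescaling $(\widetilde{Y}',\tilde{y}',G')$ by any $c>1$ preserves both property P and $\mathrm{Iso}_0p(G')$, so $l_i/c\in L_i$, contradicting the near-infimum choice of $l_i$. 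The most delicate step I anticipate is step (ii), where one must show that the critical limit really ends up in the same $K'$-class as $G_2$ rather than in some third class with strictly smaller isotropy dimension; this requires combining the gap of Lemma \ref{gap_isotropy} with the lower semicontinuity of isotropy under equivariant Gromov--Hausdorff limits of the $L_i$-witnesses, and is the place where the cone-vertex location of $\tilde{y}_2$ is used a second time.
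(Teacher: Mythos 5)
Your overall plan — critical rescaling driven by the gap in Lemma \ref{gap_isotropy}, plus reducing property \textit{(P)} to isotropy equivalence — follows the shape of the paper's proof, and your observation that conjugate compact Lie subgroups cannot be properly contained one in the other correctly reproduces the paper's deduction of Lemma \ref{omega_linear} from Lemma \ref{omega_isotropy}. But step (ii) of your critical rescaling has a real gap. After extracting the critical limit $(\widetilde{Y}',\tilde{y}',G')$ there is no mechanism in your setup to rule out $\dim\mathrm{Iso}_0 p(G')<\dim K$. The ``lower semicontinuity of isotropy under equivariant Gromov--Hausdorff limits'' you invoke is not available: isotropy is \emph{upper} semicontinuous (it can only grow along a single convergent sequence), and in any case you are comparing two distinct limits $G'$ and $H_\infty$ that merely lie at distance at most $\epsilon/3$ from each other. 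Nor does Lemma \ref{gap_isotropy} help here, because its hypothesis (3) explicitly assumes $\dim(\mathrm{Iso}_0 p(G_2))\ge\dim(K)$ — there is no gap against actions with \emph{smaller} isotropy dimension (think of $(\mathbb{R}^k,0,\mathbb{Z}_p)$ becoming arbitrarily close to $(\mathbb{R}^k,0,S^1)$ as $p\to\infty$). This is precisely the point of the paper's induction on $D(K)=(\dim K,\#K/K_0)$ in Lemma \ref{omega_isotropy}: its Claim 2 excludes $D(\mathrm{Iso}_0 p(G'))<D(K_1)$ by passing to the blow-up $G'_{\tilde{y}'}$ (which has property \textit{(P)} and the same isotropy) and applying the inductive hypothesis to the pair $(G'_{\tilde{y}'},G_1)$, whose min-$D$ is strictly smaller.

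There are two further issues. Maximizing $\dim\mathrm{Iso}_0 p(G)$ is too weak for your property-\textit{(P)} reduction: Remark \ref{rem_linear_2} only gives that the blow-down enlarges $\mathrm{Iso}_0 p$ as a \emph{proper} subgroup, which may add connected components without raising dimension, so maximality of dimension alone is not contradicted; you would need to maximize the lexicographic $D(K)$. And you do not address the case where the critical limit $G'$ fails property \textit{(P)}: the paper's Claim 3 handles this by blowing $G'$ down to $G'_\infty$ and showing $D(\mathrm{Iso}_0 p(G'_\infty))>D(K_1)$, so that $l_i/J\in L_i$ for $J$ large — a step your outline omits. Removing the paper's induction without supplying a replacement for its role (for instance, choosing $K$ of \emph{minimal} $D$ among all property-\textit{(P)} spaces in $\Omega(\widetilde{M},\Gamma)$, so that $D(\mathrm{Iso}_0 p(G'))<D(K)$ is excluded outright by minimality after blowing up $G'$ at $\tilde{y}'$) leaves the critical limit uncontrolled.
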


The key to prove Lemma \ref{omega_linear} is the following lemma.

\begin{lem}\label{omega_isotropy}
	Let $M$ be an open $n$-manifold of $\mathrm{Ric}\ge 0$ and abelian fundamental group. Suppose that $\widetilde{M}$ is $k$-Euclidean at infinity. Then for any two spaces $(\widetilde{Y}_j,\tilde{y}_j,G_j)\in \Omega(\widetilde{M},\Gamma)$ with $(\mathbb{R}^k,0,p(G_j))$ satisfying property \textit{(P)} $(j=1,2)$, $(\mathbb{R}^k,0,\mathrm{Iso}_0 p(G_1))$ must be equivalent to $(\mathbb{R}^k,0,\mathrm{Iso}_0 p(G_2))$.
\end{lem}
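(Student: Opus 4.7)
The approach is to adapt the critical rescaling argument of Proposition \ref{not_type_0}, with Lemma \ref{gap_isotropy} playing the role of Lemma \ref{gap_type_0}. I argue by contradiction. After relabeling if necessary, I may assume $\dim \mathrm{Iso}_0 p(G_2) \geq \dim \mathrm{Iso}_0 p(G_1)$; set $K := \mathrm{Iso}_0 p(G_1)$ and let $\epsilon = \epsilon(M, K) > 0$ be the constant from Lemma \ref{gap_isotropy}. Choose sequences $r_i, s_i \to \infty$ realizing the two given tangent cones, pass to subsequences with $t_i := s_i^{-1}/r_i^{-1} \to \infty$, and set $N_i = s_i^{-1}\widetilde{M}$, $q_i = \tilde{x}$, $\Gamma_i = \Gamma$, so that
$$(N_i, q_i, \Gamma_i) \overset{GH}\longrightarrow (\widetilde{Y}_1, \tilde{y}_1, G_1), \qquad (t_i N_i, q_i, \Gamma_i) \overset{GH}\longrightarrow (\widetilde{Y}_2, \tilde{y}_2, G_2).$$
Define the set of bad scales
\begin{align*}
L_i := \{\ 1 \leq l \leq t_i\ |\ & d_{GH}((lN_i, q_i, \Gamma_i), (W, w, H)) \leq \epsilon/3 \text{ for some} \\
& (W, w, H) \in \Omega(\widetilde{M}, \Gamma) \text{ with } (\mathbb{R}^k, 0, p(H)) \text{ satisfying} \\
& \text{property \textit{(P)}, } \dim \mathrm{Iso}_0 p(H) \geq \dim K, \\
& \text{and } (\mathbb{R}^k, 0, \mathrm{Iso}_0 p(H)) \text{ not equivalent to } (\mathbb{R}^k, 0, K) \,\}.
\end{align*}
The space $(\widetilde{Y}_2, \tilde{y}_2, G_2)$ witnesses $t_i \in L_i$ for $i$ large, so $L_i$ is non-empty; choose $l_i \in L_i$ with $l_i \leq \inf L_i + 1/i$.

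I first verify $l_i \to \infty$. If not, a subsequence satisfies $l_i \to C < \infty$, giving $(l_i N_i, q_i, \Gamma_i) \to (C \cdot \widetilde{Y}_1, \tilde{y}_1, G_1)$, while the corresponding witnesses $(W_i, w_i, H_i)$ subconverge to some $(W_\infty, w_\infty, H_\infty) \in \Omega(\widetilde{M}, \Gamma)$ within $\epsilon/2$ of the limit. Property \textit{(P)} passes to equivariant Gromov--Hausdorff limits: if $(A_i, v_i) \in p(H_i)$ with $(A_i, 0) \in p(H_i)$ converges to $(A, v) \in p(H_\infty)$, closedness of $p(H_\infty)$ gives $(A, 0) \in p(H_\infty)$. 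Hence $(W_\infty, H_\infty)$ satisfies property \textit{(P)}. Moreover $\dim \mathrm{Iso}_0 p(H_\infty) \geq \dim K$, since closed subgroups of the compact Lie group $O(k)$ cannot drop dimension under Hausdorff convergence; and $(\mathbb{R}^k, 0, \mathrm{Iso}_0 p(H_\infty))$ is not equivalent to $(\mathbb{R}^k, 0, K)$, for otherwise Proposition \ref{stability} applied to $(S^{k-1}, K)$ would force each $\mathrm{Iso}_0 p(H_i)$ (for $i$ large) to be either equivalent to $K$ or of strictly smaller dimension, contradicting the defining properties of $L_i$. Applying Lemma \ref{gap_isotropy} to $(C \widetilde{Y}_1, G_1)$ and $(W_\infty, H_\infty)$ now yields a gap of at least $\epsilon$, contradicting $\leq \epsilon/2$.

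Finally, with $l_i \to \infty$ established, I derive a contradiction at the smaller scale $l_i/2$. For $i$ large, $l_i/2 \geq 1$, and the rescaled witness $(\tfrac{1}{2} W_i, \tfrac{1}{2} w_i, H_i) \in \Omega(\widetilde{M}, \Gamma)$, obtained by doubling the scaling sequence that realizes $W_i$, inherits property \textit{(P)}, the same isotropy subgroup, and the non-equivalence to $K$, since these properties depend only on the $H_i$-action and are invariant under metric rescaling. Because rescaling both sides of a pointed equivariant $\delta$-approximation by a factor $c \leq 1$ yields a $c\delta$-approximation,
$$d_{GH}\bigl((l_i/2 \cdot N_i, q_i, \Gamma_i),\, (\tfrac{1}{2} W_i, \tfrac{1}{2} w_i, H_i)\bigr) \leq \tfrac{1}{2} \cdot \tfrac{\epsilon}{3} < \tfrac{\epsilon}{3},$$
so $l_i/2 \in L_i$. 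Combined with $l_i \leq \inf L_i + 1/i \leq l_i/2 + 1/i$, this forces $l_i \leq 2/i$, contradicting $l_i \to \infty$. The main technical point---and the step I expect to require the most care---is the interplay between property \textit{(P)} and the isotropy condition under both limits (where property \textit{(P)} and a lower bound on isotropy dimension are preserved) and under metric rescaling (where the isotropy subgroup at $0$ is invariant); once these stabilities are in place, the critical rescaling framework closes up analogously to Proposition \ref{not_type_0}.
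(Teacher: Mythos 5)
Your setup mirrors the paper's (same scales $N_i$, $t_i$, the set $L_i$, the critical choice $l_i$, and Claim~1 that $l_i\to\infty$), but the final step contains a genuine gap, and you also omit a piece of structure the paper needs.

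The gap is your premise that ``rescaling both sides of a pointed equivariant $\delta$-approximation by a factor $c\le 1$ yields a $c\delta$-approximation.'' This is false for pointed Gromov--Hausdorff distance. A $\delta$-approximation is controlled on a ball of radius roughly $1/\delta$; after rescaling the metric by $c<1$, the same set-theoretic map now covers a ball of the smaller rescaled radius $\approx c/\delta$, while a $\delta'$-approximation would have to cover a rescaled ball of radius $1/\delta'$, forcing $\delta' \gtrsim \delta/c$, which grows as $c$ shrinks. Thus shrinking a pointed space generally \emph{worsens} the GH bound rather than improving it, so $d_{GH}\bigl((l_i/2)\cdot N_i, \tfrac12 W_i\bigr)\le\epsilon/3$ does not follow, and $l_i/2\in L_i$ is not established. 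There is also an internal sign that something is off: if the scaling claim held, you could iterate it from $t_i\in L_i$ downward to conclude $\inf L_i\le 1$, which directly contradicts Claim~1 without ever invoking the hypothesis $K_1\not\sim K_2$. The paper avoids the issue by never rescaling a finite-$i$ witness; instead it first passes to the limit $(l_iN_i,q_i,\Gamma_i)\to(\widetilde{Y}',\tilde{y}',G')$ and uses rescaled copies or blowdowns of the \emph{limit} (which are again in $\Omega(\widetilde{M},\Gamma)$ and are eventual GH-limits of the corresponding rescaled sequences) as witnesses to get $l_i/J\in L_i$ for $i$ large.

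Separately, taking that limit forces an analysis of $K'=\mathrm{Iso}_0 p(G')$, and there the paper needs an induction on $D(K)=(\dim K,\#K/K_0)$: Claim~2 ($D(K')\ge D(K_1)$) invokes the inductive hypothesis to rule out a \emph{smaller} isotropy group appearing in the limit, a case that Lemma~\ref{gap_isotropy} alone cannot handle (hypothesis~(3) there only constrains the side with $\dim\ge\dim K$). Your version never sees this obstruction because the flawed rescaling step bypasses the limit entirely, but once that step is removed you would have to confront $D(K')<D(K_1)$, and then some well-founded induction like the paper's becomes necessary.
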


We prove Lemma \ref{omega_isotropy} by induction, in terms of the following order on the set of all compact abelian Lie groups.

\begin{defn}
	For a compact Lie group $K$, we define $D(K)=(\dim K,\#K/K_0)$. For two compact Lie groups $K$ and $H$, with $D(K)=(l_1,l_2)$ and $D(H)=(m_1,m_2)$, we say that $D(K)<D(H)$, if $l_1<m_1$, or if $l_1=m_1$ and $l_2<m_2$. We say that $D(K)\le D(H)$, if $D(K)=D(H)$ or $D(K)<D(H)$.
\end{defn}

\begin{proof}[Proof of Lemma \ref{omega_isotropy}]
	We argue by contradiction. Suppose that there are two spaces $(\widetilde{Y}_j,\tilde{y}_j,G_j)\in \Omega(\widetilde{M},\Gamma)$ such that
	$(\mathbb{R}^k,0,p(G_j))$ satisfies property \textit{(P)} $(j=1,2)$, and
	$(\mathbb{R}^k,0,\mathrm{Iso}_0 p(G_1))$ is not equivalent to $(\mathbb{R}^k,0,\mathrm{Iso}_0 p(G_2))$. We derive a contradiction by the critical rescaling argument and Lemma \ref{gap_isotropy}.

 We argue this by induction on $\min\{D(K_1),D(K_2)\}$, where $K_j=\mathrm{Iso}_0 p(G_j)$ $(j=1,2)$. Without lose of generality, we assume that $D(K_1)\le D(K_2)$. Assuming that the above can not happen when $D(K_1)<(m_1,m_2)$, we will derive a contradiction for $D(K_1)=(m_1,m_2)$.
	
	Let $r_i\to\infty$ and $s_i\to\infty$ be two sequences such that
	$$(r^{-1}_i\widetilde{M},\tilde{x},\Gamma)\overset{GH}\longrightarrow(\widetilde{Y}_1,\tilde{y}_1,G_1),$$
	$$(s^{-1}_i\widetilde{M},\tilde{x},\Gamma)\overset{GH}\longrightarrow(\widetilde{Y}_2,\tilde{y}_2,G_2),$$
	and $t_i:=(s^{-1}_i)/(r^{-1}_i)\to\infty$. Put $(N_i,q_i,\Gamma_i)=(r^{-1}_i\widetilde{M},\tilde{x},\Gamma)$, then we have
	$$(N_i,q_i,\Gamma_i)\overset{GH}\longrightarrow(\widetilde{Y}_1,\tilde{y}_1,G_1),$$
	$$(t_iN_i,q_i,\Gamma_i)\overset{GH}\longrightarrow(\widetilde{Y}_2,\tilde{y}_2,G_2).$$
	We know that $(\mathbb{R}^k,0,p(G_j))$ satisfies property \textit{(P)} $(j=1,2)$, $D(K_1)=(m_1,m_2)\le D(K_2)$, and $(\mathbb{R}^k,0,K_1)$ is not equivalent to $(\mathbb{R}^k,0,K_2)$.
	
    For each $i$, we define a set of scales
	\begin{align*}
		L_i:=\{\  1\le l\le t_i\ |\ & d_{GH}((l{N}_i,q_i,\Gamma_i),({W},{w},H))\le \epsilon/3 \\
		& \text{ for some space $(W,w,H)\in \Omega(\widetilde{M},\Gamma)$}\\
		& \text{ such that $(\mathbb{R}^k,0,p(H))$ satisfies property \textit{(P)};}\\
		& \text{ moreover, } D(\mathrm{Iso}_0 p(H))>(m_1,m_2) \text{, or}\\
		& D(\mathrm{Iso}_0 p(H))=(m_1,m_2) \text{ but } (\mathbb{R}^k,0,\mathrm{Iso}_0 p(H)) \\
		& \text{is not equivalent to } (\mathbb{R}^k,0,K_1)\}.
	\end{align*}
    We choose the above $\epsilon>0$ as follows: by Lemma \ref{gap_isotropy}, there is $\epsilon>0$, depending on ${M}$ and $(\mathbb{R}^k,0,K_1)$ such that for any $(W_j,w_j,H_i)\in \Omega(\widetilde{M},\Gamma)$ $(j=1,2)$ satisfying\\
    (1) $(\mathbb{R}^k,0,p(H_j))$ satisfies property \textit{(P)} $(j=1,2)$,\\
    (2) $(\mathbb{R}^k,0,\mathrm{Iso}_0p(H_1))$ is equivalent to $(\mathbb{R}^k,0,K_1)$,\\
    (3) $d_{GH}((W_1,w_1,H_1),(W_2,w_2,H_2))\le\epsilon$,\\ then $\dim(\mathrm{Iso}_0p(H_2))<\dim(K_1)$, or $(\mathbb{R}^k,0,\mathrm{Iso}_0p(H_2))$ is equivalent to $(\mathbb{R}^k,0,K_1)$.

    Since $t_i\in L_i$ for $i$ large, we choose $l_i\in L_i$ with $\inf L_i\le l_i \le \inf L_i+1/i$.

    \textbf{Claim 1:} $l_i\to\infty$. Suppose that $l_i\to C<\infty$ for some subsequence, then for this subsequence,
    $$(l_iN_i,q_i,\Gamma_i)\overset{GH}\longrightarrow(C\cdot \widetilde{Y}_1,\tilde{y}_1,G_1).$$
    Together with the fact that $l_i\in L_i$, we know that there is some space $(W,w,H)\in\Omega(\widetilde{M},\Gamma)$ with the properties below:\\
    (1) $(\mathbb{R}^k,0,p(H))$ satisfies property \textit{(P)},\\
    (2) $D(\mathrm{Iso}_0 p(H))>(m_1,m_2)$, or $D(\mathrm{Iso}_0 p(H))=(m_1,m_2)$ but $(\mathbb{R}^k,0,\mathrm{Iso}_0 p(H))$ is not equivalent to $(\mathbb{R}^k,0,K_1)$,\\
    (3) $d_{GH}((C\cdot \widetilde{Y}_1,\tilde{y}_1,G_1),(W,w,H))\le\epsilon/2$.\\
    Since $(\mathbb{R}^k,0,p(H))$ satisfies property \textit{(P)}, we see that $(\mathbb{R}^k,0,\mathrm{Iso}_0p(H))$ is equivalent to $(C\cdot \mathbb{R}^k,0,\mathrm{Iso}_0p(H))$. By Lemma \ref{gap_isotropy} the choice of $\epsilon$, we conclude that either $\dim(\mathrm{Iso}_0p(H))<\dim K_1$, or $(\mathbb{R}^k,0,\mathrm{Iso}_0 p(H))$ is equivalent to $(\mathbb{R}^k,0,K_1)$, which is a contradiction to the condition (2) above. We have verified Claim 1.

    Passing to a subsequence if necessary, we have convergence
    $$(l_iN_i,q_i,\Gamma_i)\overset{GH}\longrightarrow(\widetilde{Y}',\tilde{y}',G').$$
    To draw a contradiction, the goal is ruling out all the possibilities of $p(G')$-action.

    \textbf{Claim 2:} $D(K')\ge (m_1,m_2)$, where $K'=\mathrm{Iso}_0p(G')$. If $D(K')<(m_1,m_2)$, we pass to the equivariant tangent cone of $(\widetilde{Y}',\tilde{y}',G')$ at $\tilde{y}'$. In this way, we have $(\widetilde{Y}',\tilde{y}',G'_{\tilde{y}'})$ with $(\mathbb{R}^k,0,p(G'_{\tilde{y}'}))$ satisfying property \textit{(P)} (see Remark \ref{rem_linear_1}). Note that $(\mathbb{R}^k,0,\mathrm{Iso}_0p(G'_{\tilde{y}'}))$ is equivalent to $(\mathbb{R}^k,0,K')$ and $D(K')<(m_1,m_2)$. We know that this can not happen due to the induction assumption.

    \textbf{Claim 3:} $(\mathbb{R}^k,0,p(G'))$ satisfies property \textit{(P)}, and $D(K')=(m_1,m_2)$. In fact, we pass to the equivariant tangent cone of $(\widetilde{Y}',\tilde{y}',G')$ at infinity:
    $$(j^{-1}\widetilde{Y}',\tilde{y}',G')\overset{GH}\longrightarrow(\widetilde{Y}',\tilde{y}',G'_\infty).$$
    For this space, $(\mathbb{R}^k,0,p(G'_\infty))$ satisfies property \textit{(P)} (see Remark \ref{rem_linear_1}). Suppose that Claim 3 fails, then $D(\mathrm{Iso}_0p(G'_\infty))>(m_1,m_2)$ (see Remark \ref{rem_linear_2}). We choose a large integer $J$ such that
    $$d_{GH}((J^{-1}\widetilde{Y}',\tilde{y}',G'),(\widetilde{Y}',\tilde{y}',G'_\infty))\le\epsilon/4.$$
    Hence for all $i$ large, we have
    $$d_{GH}((J^{-1}l_iN_i,q_i,\Gamma_i),(\widetilde{Y}',\tilde{y}',G'_\infty))\le\epsilon/3.$$
    This implies that $l_i/J\in L_i$ for all $i$ large, which is a contradiction to our choice of $l_i$.

    \textbf{Claim 4:} $(\mathbb{R}^k,0,K')$ is equivalent to $(\mathbb{R}^k,0,K_1)$. Suppose not, then we consider the sequence $l_i/2$:
    $$(l_i/2\cdot N_i,q_i,\Gamma_i)\overset{GH}\longrightarrow(1/2 \cdot \widetilde{Y}',\tilde{y}',G').$$
    Since $(\mathbb{R}^k,0,p(G'))$ satisfies property \textit{(P)}, $(1/2\cdot \mathbb{R}^k,0,K')$ is equivalent to $(\mathbb{R}^k,0,K')$, which is not equivalent to $(\mathbb{R}^k,0,K_1)$. This means that $l_i/2\in L_i$ for $i$ large. A contradiction.

    This leads to the ultimate contradiction: Because $l_i\in L_i$, there is some space $(W,w,H)\in\Omega(\widetilde{M},\Gamma)$ satisfying the conditions (1)(2) in the proof of Claim 1, and
    $$d_{GH}((\widetilde{Y}',\tilde{y}',G'),(W,w,H))\le\epsilon/2.$$
    On the other hand, by Claims 3, 4, Lemma \ref{gap_isotropy} and the choice of $\epsilon$, $(W,w,H)$ can not fulfill condition (2) (cf. proof of Claim 1).

    For the remaining base case $D(K_1)=(0,0)$, note that in the above proof, the induction assumption is only used in Claim 2 to conclude $D(K')\ge (m_1,m_2)$. For the base case $(m_1,m_2)=(0,0)$, it is trivial that $D(K')\ge(0,0)$.
\end{proof}

\begin{proof}[Proof of Lemma \ref{omega_linear}]
	With Lemma \ref{omega_isotropy}, it is enough to show that for any space $(\widetilde{Y},\tilde{y},G)\in\Omega(\widetilde{M},\Gamma)$, $(\mathbb{R}^k,0,p(G))$ always satisfies property \textit{(P)}. Suppose the contrary, that is, $(\mathbb{R}^k,0,p(G))$ does not satisfy property \textit{(P)} for some $(\widetilde{Y},\tilde{y},G)$ in $\Omega(\widetilde{M},\Gamma)$. We pass to the equivariant tangent cone of $(\widetilde{Y},\tilde{y},G)$ at $\tilde{y}$ and at infinity respectively (see Remark \ref{rem_linear_1}). We obtain $(\widetilde{Y},\tilde{y},G_{\tilde{y}})$ and $(\widetilde{Y},\tilde{y},G_\infty).$
	For these two spaces, $(\mathbb{R}^k,0,p(G_{\tilde{y}}))$ and $(\mathbb{R}^k,0,p(G_\infty))$ always satisfy property \textit{(P)}. By Lemma \ref{omega_isotropy}, $(\mathbb{R}^k,0,\mathrm{Iso}_0p(G_{\tilde{y}}))$ is equivalent to $(\mathbb{R}^k,0,\mathrm{Iso}_0p(G_\infty))$.

On the other hand, because $(\mathbb{R}^k,0,p(G))$ does not satisfy property \textit{(P)}, $\mathrm{Iso}_0p(G)$ is a proper subgroup of $\mathrm{Iso}_0p(G_\infty)$ (Remark \ref{rem_linear_2}). Since $\mathrm{Iso}_0p(G)=\mathrm{Iso}_0p(G_{\tilde{y}})$, we conclude that $(\mathbb{R}^k,0,\mathrm{Iso}_0p(G_{\tilde{y}}))$ and $(\mathbb{R}^k,0,\mathrm{Iso}_0p(G_\infty))$ can not be equivalent, a contradiction to Lemma \ref{omega_isotropy}.
\end{proof}

Lemmas \ref{linear} and \ref{omega_linear} imply that there exists a closed subgroup $K$ of $O(k)$ such that for any $(\widetilde{Y},\tilde{y},G)\in\Omega(\widetilde{M},\Gamma)$, $(\mathbb{R}^k,0,p(G))$ satisfies property \textit{(P)}, and $p(G)=K\times \mathbb{R}^{l}\times \mathbb{Z}^{m}$. To finish the proof of Theorem \ref{omega}, we need to show that $l$ is independent of $(\widetilde{Y},\tilde{y},G)$ and $m$ is always $0$.

We prove the following gap lemma on the non-compact factor of $p(G)$, which does not require Lemma \ref{gap_isotropy}.

\begin{lem}\label{gap_trans}
	Let $M$ be an open $n$-manifold of $\mathrm{Ric}\ge 0$. Suppose that $\widetilde{M}$ is $k$-Euclidean at infinity. Then there exists $\epsilon(M)>0$ such that the following holds.
	
	For any two spaces $(\widetilde{Y}_j,\tilde{y}_i,G_j)\in \Omega(\widetilde{M},\Gamma)$ $(j=1,2)$ satisfying\\
	(1) $(\mathbb{R}^k,0,p(G_j))$ satisfies property \textit{(P)} ($j=1,2$),\\
	(2) $p(G_1)=\mathrm{Iso}_0p(G_1)\times\mathbb{R}^l$ (cf. Lemma \ref{linear}),\\
	(3) $p(G_2)$ contains $\mathbb{R}^l\times \mathbb{Z}$ as a closed subgroup; for this extra $\mathbb{Z}$ subgroup, it has generator $\gamma$ with $d_{\mathbb{R}^k}(\gamma\cdot0,0)\le 1$.\\
	Then
	$$d_{GH}((\widetilde{Y}_1,\tilde{y}_1,G_1),(\widetilde{Y}_2,\tilde{y}_2,G_2))\ge\epsilon(M).$$
\end{lem}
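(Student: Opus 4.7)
The plan is to argue by contradiction, paralleling the structure of Lemma \ref{gap_type_0}. Suppose that sequences $(\widetilde{Y}_{ij},\tilde{y}_{ij},G_{ij})\in\Omega(\widetilde{M},\Gamma)$ ($j=1,2$) satisfy conditions (1)(2)(3) while $d_{GH}((\widetilde{Y}_{i1},\tilde{y}_{i1},G_{i1}),(\widetilde{Y}_{i2},\tilde{y}_{i2},G_{i2}))\to 0$. After passing to subsequences, both sequences converge to a common limit $(\widetilde{Y}_\infty,\tilde{y}_\infty,G_\infty)=(\mathbb{R}^k\times C(Z_\infty),(0,z_\infty),G_\infty)$ with $\mathrm{diam}(Z_\infty)<\pi$ by Lemma \ref{gap_higher_sym}. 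Consequently the projections converge on the Euclidean factor:
$$(\mathbb{R}^k,0,p(G_{ij}))\overset{GH}\longrightarrow(\mathbb{R}^k,0,p(G_\infty)),\quad j=1,2.$$
The strategy is then to extract from the two sequences incompatible information about the pure translation subgroup of $p(G_\infty)$.

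First I would read off the upper bound on translations coming from sequence 1. By condition (2) and abelianness, every element of $p(G_{i1})$ has the form $(A,v)$ with $A\in K_{i1}:=\mathrm{Iso}_0p(G_{i1})$ fixing $0$ and $v$ in a fixed $l$-dimensional subspace $V_i\subseteq\mathbb{R}^k$; commutativity of the product $K_{i1}\times\mathbb{R}^l$ forces $A$ to fix $V_i$ pointwise, so the pure translation subgroup (elements $(I,v)$) of $p(G_{i1})$ is exactly $V_i$. Any element of $p(G_\infty)$ is a subsequential limit of such $(A_i,v_i)$, so it lies in $K_\infty\cdot V'_\infty$ where $K_\infty=\lim K_{i1}$ and $V'_\infty=\lim V_i$. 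In particular the pure translation subgroup of $p(G_\infty)$ is contained in the $l$-dimensional subspace $V'_\infty$.

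Next I would extract from sequence 2 a pure translation transverse to $V_i$ with controlled norm. Write $\gamma_i=(A_i,v_i)$ for the $\mathbb{Z}$-generator of condition (3), with $|v_i|\le 1$. Commutativity of $\gamma_i$ with the $\mathbb{R}^l$ translation subgroup again forces $A_i$ to fix $V_i$ pointwise. By property \textit{(P)} applied to $p(G_{i2})$, the rotational part $(A_i,0)$ belongs to $p(G_{i2})$, hence so does $(I,v_i)=\gamma_i(A_i,0)^{-1}$. The closed direct product structure $\mathbb{R}^l\times\mathbb{Z}\hookrightarrow p(G_{i2})$ combined with Lemma \ref{linear} forces the component $w_i:=v_i^{\perp}$ of $v_i$ orthogonal to $V_i$ to be non-zero; otherwise $\langle\gamma_i\rangle$ would either meet the $\mathbb{R}^l$ factor non-trivially (when $A_i$ has finite order, using $\gamma_i^{\mathrm{ord}(A_i)}\in V_i$) or fail to be closed in $\mathrm{Isom}(\mathbb{R}^k)$ in a way compatible with the Lemma \ref{linear} splitting. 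Thus $(I,w_i)\in p(G_{i2})$ is a pure translation with $w_i\in V_i^{\perp}\setminus\{0\}$ and $|w_i|\le 1$.

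Finally I would take limits and contradict the sequence 1 bound. If $|w_i|$ stays bounded below, a subsequence of $w_i$ converges to some $w_\infty\in(V'_\infty)^{\perp}\setminus\{0\}$, so $(I,w_\infty)\in p(G_\infty)$ is a pure translation outside $V'_\infty$. If $|w_i|\to 0$, choose integers $n_i$ with $1/2\le|n_iw_i|\le 1$; then $(I,n_iv_i)\in p(G_{i2})$ sub-converges to a pure translation $(I,w_\infty)\in p(G_\infty)$ with $|w_\infty|\in[1/2,1]$ and $w_\infty\in(V'_\infty)^{\perp}$. Either way, the pure translation subgroup of $p(G_\infty)$ strictly exceeds $V'_\infty$, contradicting the bound derived from sequence 1. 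The main obstacle I expect is to rigorously rule out the "spiral" scenario where $A_i$ has infinite order and $v_i\in V_i$; this requires using property \textit{(P)} to include the closure $\overline{\langle A_i\rangle}$ as a positive-dimensional torus in $p(G_{i2})$ and then arguing, via Lemma \ref{linear} applied to the full abelian group $p(G_{i2})$, that the $\mathbb{Z}$ in condition (3) must still contribute a transverse translation direction.
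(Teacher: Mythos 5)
Your proof follows the same contradiction-by-double-convergence structure as the paper's: both sequences converge to a common limit $(\widetilde{Y}_\infty,\tilde{y}_\infty,G_\infty)$, sequence~1 forces $p(G_\infty)$ to be $K_\infty\times\mathbb{R}^l$, and sequence~2 produces, in the limit, an element incompatible with that structure. The paper phrases the second half in terms of the proper subgroup $H_i=K_{i2}\times\mathbb{R}^l$ and an element $\alpha_i\in p(G_{i2})$ with $d(H_i\cdot 0,\alpha_i\cdot 0)\in(1,3)$, then passes to the limit of the quadruple $(\mathbb{R}^k,0,H_i,\alpha_i)$. What you do differently is make the source of $\alpha_i$ explicit: you apply property $\textit{(P)}$ to strip off the rotational part of the $\mathbb{Z}$-generator and obtain a pure translation $(I,w_i)$ transverse to the $\mathbb{R}^l$-factor; this is exactly the element the paper's $\alpha_i$ is built from, so the two arguments coincide in substance. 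Your version makes the mechanism more transparent and pins down exactly where property $\textit{(P)}$ enters.

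A few details to tighten. First, you use $V_i$ for both the translation subspace of $p(G_{i1})$ and the $\mathbb{R}^l$-factor inside $p(G_{i2})$; these are a priori different $l$-planes, and the orthogonality $w_i\perp V_i$ is with respect to the second, while the conclusion $w_\infty\in(V'_\infty)^\perp$ is with respect to the limit of the first. To close this you need the observation that the $\mathbb{R}^l$-orbit of $p(G_{i2})$ at $0$ also Hausdorff-converges to $p(G_\infty)\cdot 0=V'_\infty$, and being an $l$-plane contained in a converging sequence of closed sets with $l$-dimensional limit, it converges in the Grassmannian to $V'_\infty$; this is implicit in the paper too. Second, in the small-translation case you should write $(I,n_iw_i)$, not $(I,n_iv_i)$: $n_iv_i$ has an uncontrolled $\mathbb{R}^l$-component. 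Third, the argument that $w_i\neq 0$ is correct but the phrase ``fail to be closed in a way compatible with the Lemma \ref{linear} splitting'' should be replaced by a concrete statement: if $w_i=0$ then $\gamma_i=(I,v_i)(A_i,0)\in\mathbb{R}^l\cdot K_{i2}$, so the closed subgroup $\mathbb{R}^l\times\langle\gamma_i\rangle$ sits inside $K_{i2}\times\mathbb{R}^l$; passing to the quotient by the closed normal subgroup $\mathbb{R}^l$ would exhibit $\mathbb{Z}$ as a closed (hence compact) subgroup of the compact group $K_{i2}$, which is impossible. This handles the finite- and infinite-order cases for $A_i$ uniformly and resolves the ``spiral'' worry you raise at the end.
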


\begin{proof}
	We argue by contradiction. Suppose that there are two sequences of spaces in $\Omega(\widetilde{M},\Gamma)$: $\{(\widetilde{Y}_{ij},\tilde{y}_{ij},G_{ij})\}_i$ $(j=1,2)$ such that\\
	(1) $(\mathbb{R}^k,0,p(G_{ij}))$ satisfies property \textit{(P)} ($j=1,2$);\\
	(2) $p(G_{i1})=K_{i1}\times \mathbb{R}^l$, where $K_{i1}=\mathrm{Iso}_0p(G_{i1})$;\\
	(3) $p(G_{i2})$ contains $\mathbb{R}^l\times \mathbb{Z}$ as a closed subgroup; for this extra $\mathbb{Z}$ subgroup, it has generator $\gamma_i$ with $d_{\mathbb{R}^k}(\gamma_i\cdot0,0)\le 1$;\\
	(4) $d_{GH}((\widetilde{Y}_{i1},\tilde{y}_{i1},G_{i1}),(\widetilde{Y}_{i2},\tilde{y}_{i2},G_{i2}))\to 0$ as $i\to\infty$.\\
	This gives the convergence
	$$(\widetilde{Y}_{i1},\tilde{y}_{i1},G_{i1})\overset{GH}\longrightarrow(\widetilde{Y}_\infty,\tilde{y}_\infty,G_\infty),$$
	$$(\widetilde{Y}_{i2},\tilde{y}_{i2},G_{i2})\overset{GH}\longrightarrow(\widetilde{Y}_\infty,\tilde{y}_\infty,G_\infty);$$
	thus
	$$(\mathbb{R}^k,0,p(G_{i1}))\overset{GH}\longrightarrow(\mathbb{R}^k,0,p(G_\infty)),$$
	$$(\mathbb{R}^k,0,p(G_{i2}))\overset{GH}\longrightarrow(\mathbb{R}^k,0,p(G_\infty)).$$
	Since $p(G_{i1})=K_{i1}\times \mathbb{R}^l$ and $(\mathbb{R}^k,0,p(G_{i1}))$ satisfies property \textit{(P)}, we conclude that  $(\mathbb{R}^k,0,p(G_\infty))$ also satisfies property \textit{(P)}, and $p(G_\infty)=K_\infty\times\mathbb{R}^l$ with $K_\infty$ fixing $0$. On the other hand, by hypothesis (3), $p(G_{i2})$ contains a proper closed subgroup $H_i=K_{i2}\times \mathbb{R}^l$ with $K_{i2}=\mathrm{Iso}_0p(G_{i2})$. Moreover, there is some element $\alpha_i\in p(G_{i2})$ outside $H_i$ such that $d(H_i\cdot 0,\alpha_i\cdot 0)\in (1,3)$. This yields
	$$(\mathbb{R}^k,0,H_i,\alpha_i)\overset{GH}\longrightarrow(\mathbb{R}^k,0,H_\infty,\alpha_\infty),$$
	where $\alpha_\infty$ is outside $H_\infty$ with $d(H_\infty\cdot 0,\alpha_\infty\cdot0)\in (1,3)$. Therefore, $p(G_\infty)$ also contains $\mathbb{R}^l\times \mathbb{Z}$ as a closed subgroup, a contradiction.
\end{proof}

\begin{proof}[Proof of Theorem \ref{omega}]
	By Lemmas \ref{linear} and \ref{omega_linear}, for any space $(\widetilde{Y},\tilde{y},G)\in\Omega(\widetilde{M},\Gamma)$, $(\mathbb{R}^k,0,p(G))$ always satisfies property \textit{(P)}, and $p(G)=K\times \mathbb{R}^{l}\times \mathbb{Z}^{m}$, where $K$ is a closed subgroup of $O(k)$ independent of $(\widetilde{Y},\tilde{y},G)$. It remains to show that $l$ is independent of $(\widetilde{Y},\tilde{y},G)$ and $m$ is always $0$.
	
	We argue by contradiction with the critical rescaling argument and Lemma \ref{gap_trans}. By passing to the tangent cones at the base point of spaces in $\Omega(\widetilde{M},\Gamma)$, we can choose contradictory two spaces $(\widetilde{Y}_j,\tilde{y}_j,G_j)\in\Omega(\widetilde{M},\Gamma)$ $(j=1,2)$ with $p(G_1)=K\times \mathbb{R}^l$ and $p(G_2)$ containing $\mathbb{R}^l\times \mathbb{Z}$ as a closed subgroup. We rule out this by induction on $l$. Assuming that this can not happen for $p(G_1)$ has non-compact factor with dimension $0,..,l-1$, we prove the case $p(G_1)=K\times \mathbb{R}^l$.
	
	Let $r_i\to\infty$ and $s_i\to\infty$ be two sequences such that
	$$(r^{-1}_i\widetilde{M},\tilde{x},\Gamma)\overset{GH}\longrightarrow(\widetilde{Y}_1,\tilde{y}_1,G_1),$$
	$$(s^{-1}_i\widetilde{M},\tilde{x},\Gamma)\overset{GH}\longrightarrow(\widetilde{Y}_2,\tilde{y}_2,G_2),$$
	and $t_i:=(s^{-1}_i)/(r^{-1}_i)\to\infty$. Rescale $s_i^{-1}$ down by a constant if necessary, we assume that the extra $\mathbb{Z}$ subgroup in $p(G_2)$ has generator $\gamma$ with $d_{\mathbb{R}^k}(\gamma\cdot 0,0)\le 1$. We put $(N_i,q_i,\Gamma_i)=(r^{-1}_i\widetilde{M},\tilde{x},\Gamma)$, then
	$$(N_i,q_i,\Gamma_i)\overset{GH}\longrightarrow(\widetilde{Y}_1,\tilde{y}_1,G_1),$$
	$$(t_iN_i,q_i,\Gamma_i)\overset{GH}\longrightarrow(\widetilde{Y}_2,\tilde{y}_2,G_2).$$
	
	For each $i$, we define
	\begin{align*}
		L_i:=\{\  1\le l\le t_i\ |\ & d_{GH}((l{N}_i,q_i,\Gamma_i),({W},{w},H))\le \epsilon/3 \\
		& \text{ for some space $(W,w,H)\in \Omega(\widetilde{M},\Gamma)$ such that}\\
		& \text{ $p(H)$ contains $\mathbb{R}^l\times \mathbb{Z}$ as a closed subgroup;}\\
		& \text{ moreover, this extra $\mathbb{Z}$-subgroup }\\
		& \text{ has generator $h$ with } d_{\mathbb{R}^k}(h\cdot 0,0)\le 1.\}
	\end{align*}
    In the above definition, we choose $\epsilon=\epsilon({M})>0$ as the constant in Lemma \ref{gap_trans}. $t_i\in L_i$ for $i$ large. We choose $l_i\in L_i$ with $\inf L_i\le l_i \le \inf L_i+1/i$.

    \textbf{Claim 1:} $l_i\to\infty$. If $l_i\to C$, then
    $$(l_iN_i,q_i,\Gamma_i)\overset{GH}\longrightarrow(C\cdot\widetilde{Y}_1,\tilde{y}_1,G_1).$$
    The projection to Euclidean factor $(C\cdot\mathbb{R}^k,0,p(G_1))$ is equivalent to $(\mathbb{R}^k,0,p(G'))$, because the later one satisfies property \textit{(P)} and $p(G')=K\times \mathbb{R}^l$. Since $l_i\in L_i$,
    $$d_{GH}((C\cdot\widetilde{Y}_1,\tilde{y}_1,G_1),(W,w,H))\le\epsilon/2$$
    for some $(W,w,H)\in \Omega(\widetilde{M},\Gamma)$ with $p(H)$ containing $\mathbb{R}^l\times \mathbb{Z}$ as a closed subgroup. Moreover, the extra $\mathbb{Z}$-subgroup has generator $h$ with $d_{\mathbb{R}^k}(h\cdot 0,0)\le 1$. This is a contradiction to our choice of $\epsilon$ and Lemma \ref{gap_trans}.

    Next we consider convergence
    $$(l_iN_i,q_i,\Gamma_i)\overset{GH}\longrightarrow(\widetilde{Y}',\tilde{y}',G').$$

    \textbf{Claim 2:} $p(G')=K\times \mathbb{R}^l$. Indeed, because $(\mathbb{R}^k,0,p(G'))$ satisfies property \textit{(P)}, we can write $p(G')=K\times\mathbb{R}^{l'}\times \mathbb{Z}^{m'}$. We can also assume that $l'\ge l$ due to induction assumption. If $l'>l$ or $m'\not= 0$, then $p(G')$ contains $\mathbb{R}^l\times \mathbb{Z}$ as a closed subgroup. Consequently, $l_i/d\in L_i$ for some constant $d\ge 2$, which contradicts our choice of $l_i$. Hence Claim 2 holds.

    We derive the desired contradiction: $l_i\in L_i$ so $$d_{GH}((\widetilde{Y}',\tilde{y}',G'),(W,w,H))\le\epsilon/2$$
    for some space $(W,w,H)\in \Omega(\widetilde{M},\Gamma)$, where $p(H)$ contains $\mathbb{R}^l\times \mathbb{Z}$ as a closed subgroup, and the extra $\mathbb{Z}$-subgroup has generator $h$ with $d_{\mathbb{R}^k}(h\cdot 0,0)\le 1$. A contradiction to Lemma \ref{gap_trans}.

    For the remaining base case $p(G_1)=K$ ($l=0$), the above proof also goes through. Indeed, note that we use the induction assumption to conclude that $l'\ge l$ in Claim 2. For $l=0$, $l'\ge 0$ holds trivially.
\end{proof}

We conclude this paper with remarks on extensions of Theorems A and B. As indicated in the introduction, we do not pursue these extensions because we do not use them in this paper.

\begin{rem}\label{rem_non_abelian}
	With some mild additional arguments, one can generalize Theorem B to any nilpotent fundamental groups:
	
	\textit{Let $M$ be an open $n$-manifold with nilpotent fundamental group $\mathrm{Ric}\ge 0$, whose universal cover $\widetilde{M}$ is $k$-Euclidean at infinity. Then there exist a closed nilpotent subgroup $K$ of $O(k)$ and an integer $l\in[0,k]$ such that for any space $(\widetilde{Y},\tilde{y},G)\in \Omega(\widetilde{M},\Gamma)$, $(\mathbb{R}^k,0,p(G))$ satisfies property (P) and $p(G)=K\times \mathbb{R}^l$.}

\end{rem}

\begin{rem}
    The proof of Theorem B does not rely on the facts that $\widetilde{M}$ is simply connected and $\pi_1(M,x)$-action is free. Thus Theorem B can be generalized to any isometric nilpotent $G$-action on $M$, if $M$ has $\mathrm{Ric}_M\ge 0$ and is $k$-Euclidean at infinity.
\end{rem}

\begin{rem}\label{rem_fix}
    Theorems A and B holds remains true if one replace the $k$-Euclidean at infinity condition by the following: there is $k$ such that any tangent cone of $\widetilde{M}$ at infinity splits as $(\mathbb{R}^k\times X,(0,x))$, where $(X,x)$ satisfies\\
	(1) $X$ has no lines,\\
	(2) any isometry of $X$ fixes $x$.\\
	With this assumption, tangent cones of $\widetilde{M}$ at infinity may not be metric cones nor be polar spaces. Nevertheless, we still have the desired properties on $\mathrm{Isom}(\widetilde{Y})$ for any $\widetilde{Y}=\mathbb{R}^k\times X\in \Omega(\widetilde{M})$ (cf. Propositions \ref{cone_spit}, \ref{isom_gp_split} and Remark \ref{rem_orb}):\\
	(1) $\mathrm{Isom}(\mathbb{R}^k\times X)=\mathrm{Isom}(\mathbb{R}^k)\times\mathrm{Isom}(X)$,\\
	(2) $g\cdot (v,x)=(p(g)\cdot v,x)$ for any $g\in \mathrm{Isom}(\widetilde{Y})$ and any $v\in\mathbb{R}^k$, where $p:\mathrm{Isom}(\widetilde{Y})\to\mathrm{Isom}(\mathbb{R}^k)$ is the natural projection.\\
These properties are all that we required in our proof of Theorems A and B. 
\end{rem}

\begin{rem}
	If ${M}$ has the unique tangent cone at infinity as a metric cone $(\mathbb{R}^k\times C(Z),(0,z))$, and $G$ is a nilpotent group acting as isometries on $M$, then equivariant stability at infinity holds on the entire cone, that is, $\Omega(M,G)$ consists of a single element. More precisely, there exist a closed nilpotent subgroup $K$ and an integer $l\in[0,k]$ such that $(\mathbb{R}^k\times C(Z),(0,z),\mathbb{R}^l\times K)$ is the only element in $\Omega(M,G)$, where $K$ fixing $(0,z)$ and the subgroup $\{e\}\times \mathbb{R}^l$ acting as translations in the $\mathbb{R}^k$-factor.
	
	This can be proved with Remark \ref{rem_stable} and a similar argument in Section 4.
\end{rem}

\Addresses

\end{document}